\documentclass[12pt, twoside]{article}
\usepackage{a4wide}
\usepackage{amssymb}
\usepackage{amsmath}
\usepackage{amsfonts}
\usepackage{graphicx}
\usepackage{color}
\usepackage{subfigure}
\usepackage{bbm}




\newtheorem{thm}{Theorem}[section]
\newtheorem{cor}[thm]{Corollary}
\newtheorem{lem}[thm]{Lemma}
\newtheorem{defin}[thm]{Definition}
\newtheorem{rem}[thm]{Remark}

\setcounter{MaxMatrixCols}{30}

\providecommand{\U}[1]{\protect\rule{.1in}{.1in}}

\newcommand{\subjclass}[1]{\bigskip\noindent\emph{2010 Mathematics Subject Classification:}\enspace#1}
\newcommand{\keywords}[1]{\noindent\emph{Keywords:}\enspace#1}


\newenvironment{proof}[1][Proof]{\noindent\textbf{#1.} }{\ \rule{0.5em}{0.5em}}

\catcode`\@=11

\begin{document}

\title{
Interfaces determined by capillarity and gravity in a two-dimensional porous medium
}

\author{
M. Calle\thanks{
Universidad Carlos III de Madrid (UC3M),
 Departamento de Econom\'ia,
 Facultad de Ciencias Sociales y Jur\'idicas, 
Calle Madrid, 126. 28903 Getafe (Madrid) Spain. 
E-mail: maria.calle.garcia@uc3m.es}, 
 C.~M. Cuesta\thanks{
University of the Basque Country (UPV/EHU),
Departmento de Matem\'aticas, 
Aptdo. 644, 48080 Bilbao, Spain. 
E-mail: carlotamaria.cuesta@ehu.es}, 
J.~J.~L. Vel\'{a}zquez\thanks{
Institut f\"{u}r Angewandte Mathematik,Universit\"{a}t Bonn,
Endenicher Allee 60, 53115 Bonn, Germany. 
E-mail: velazquez@iam.uni-bonn.de}
}


%
%
\date{}
\maketitle
\begin{abstract}
We consider a two-dimensional model of a porous medium where circular grains are uniformly 
distributed in a squared container. We assume that such medium is partially filled with 
water and that the stationary interface separating the water phase from the air phase is 
described by the balance of capillarity and gravity. 
Taking the unity as the average distance between grains, 
we identify four asymptotic regimes that depend on the Bond number and 
the size of the container. We analyse, in probabilistic terms, 
the possible global interfaces that can form in each of these regimes. 
In summary, we show that in the regimes where gravity dominates 
the probability of configurations of grains allowing solutions close to 
the horizontal solution is close to one. Moreover, in such regimes where 
the size of the container is sufficiently large we can describe deviations 
from the horizontal in probabilistic terms. 
 On the other hand, when capillarity dominates while 
the size of the container is sufficiently large, we find that 
the probability of finding interfaces close to the graph of a given smooth 
curve without self-intersections is close to one.   

\subjclass{35R35, 76S05; 76T10, 76M99, 60C05.}

\keywords{capillarity-gravity interface; two-dimensional porous medium; probabilistic asymptotic analysis} 
\end{abstract}

\section{Introduction}\label{intro}
In this paper, we present a two-dimensional model of a porous medium that consists 
of a squared container partially filled with a liquid and where the pore throats are 
formed by fixed circular grains that are uniformly distributed. 
We analyse in probabilistic terms the structure of the stationary interfaces that 
separate the liquid from air. We  consider several parameter regimes where, essentially, 
either gravity or capillarity dominates. 

In this model the form of the interfaces, both at the pore scale and at the macro scale,
 is determined by the balance between gravity and capillary forces.  
For a given volume of liquid, these interfaces are the union of single 
interfaces (see \cite{Finn}) that meet the solid matrix at given contact 
angles following Young's law (see e.g. \cite{batchelor}).

The quasi-stationary system seems to evolve by means of a 
sequence of local minimizers of the energy. The energy for steady states is the sum of 
surface energy and potential gravitational energy. The contact angle conditions can be 
encoded in the different interfacial energies. One of the most relevant features in these 
flows is the existence of a large number of equilibria that are local minimizers of 
the total energy of the system. In quasi-stationary situations, 
if the volume of the liquid phase changes slowly, transitions occur between different minimizers. 
Understanding the evolution of such a system would require the 
description of the global motion of the interface that explains the jumps 
between these multiple local energy minimizers. A first step towards this goal 
consists in understanding the structure of the minimizers.

The aim of this paper is thus to describe the structure of equilibrium 
interfaces for different parameter regimes in the model described above. 
There are several characteristic lengths in the problem, namely, the distance 
between grains, their size, as well as the macroscopic size of the system. 
On the other hand, there are other length scales (the capillary length and 
the inverse of the Bond number) that, as we shall see, give the distances where surface tension 
and gravity balance (typically, surface tension dominates for 
small distances and gravity for large ones).

This study is motivated by the experiments presented in Furuberg-Maloy-Feder 
\cite{Maloy92} and \cite{Maloy96}. The authors reproduced Haines
 jumps (see \cite{haines}) in a slow drainage process in a container consisting 
of two plates separated by thin cylinders that act as obstacles or `grains'. 
We recall that Haines jumps are abrupt changes of the pressure in the liquid 
phase due to sudden changes in the geometry of the interface. 
From the mathematical point of view these jumps seem to correspond to a 
fast transition from one equilibrium state to a different one in the 
capillarity equations. 

We mention that there are several mathematical studies 
related to the study of Multiphase Flow in Porous Media. 
However, much of the mathematical analysis has focused on the qualitative 
study of semi-empirical macroscopic descriptions of these flows 
(see e.g. \cite{bear}). There are also rigorous results 
concerning the derivation of such macroscopic laws from the flows taking place
 at the pore scale. Not surprisingly, such attends are restricted to 
single-phase flow (see the chapter of \cite{SP}, \cite{Tartar}) or simplified models mimicking 
essential features of multi-phase flow (e.g. \cite{JBS07}, \cite{BS05_I}, \cite{BS05_II}, \cite{BS07_III}). 
Although our model falls into the second category, we believe 
that one can analyse simple, but not phenomenological, models in probabilistic 
terms to get a better understanding of the effect that processes at the pore scale 
have in the whole system. In this regard, our work is also related to for example 
\cite{DirrDondl} (and the references therein), where the propagation of fronts through 
a random field of obstacles is analysed in relation to hysteresis. 

In the next section we set up the mathematical model and discern 
the parameter regimes under consideration. We outline the contents of the 
paper at the end of this Introduction.

\subsection{The mathematical model and main results}\label{sec:model}
We consider a two dimensional rectangular container of length $L$ that contains
 $N$ uniformly distributed circular grains. If the average distance between grains is $d>0$ 
(understood as the average of the distances of every 
center to its nearest neighbour), then $N\propto L^2/d^2$. We then let $\nu>0$ represent  
the density of grains, i.e. $\nu$ satisfies $N=\nu L^2/d^2$. 
We assume, for simplicity, that all grains are of equal size and we denote by 
$R<d/2$ their radius.

The container is partially filled with a liquid so that an interface forms 
separating the liquid from the air. We assume that such an interface splits the domain in two regions, 
i.e. it does not intersect itself and there are no isolated bubbles of either 
of the fluid phases. Let us denote by $\Gamma$ the curve that joins grains and forms the 
interface, that is the union of capillary(-pressure) curves $\gamma$ joining 
every two grains that meet the interface and that we call {\it elemental components} of $\Gamma$. 
A sketch of this setting is shown in Figure~\ref{basic:setting}. 
\begin{figure}[hhh]
\begin{center}
\includegraphics[width=0.75\textwidth,height=.35\textwidth]{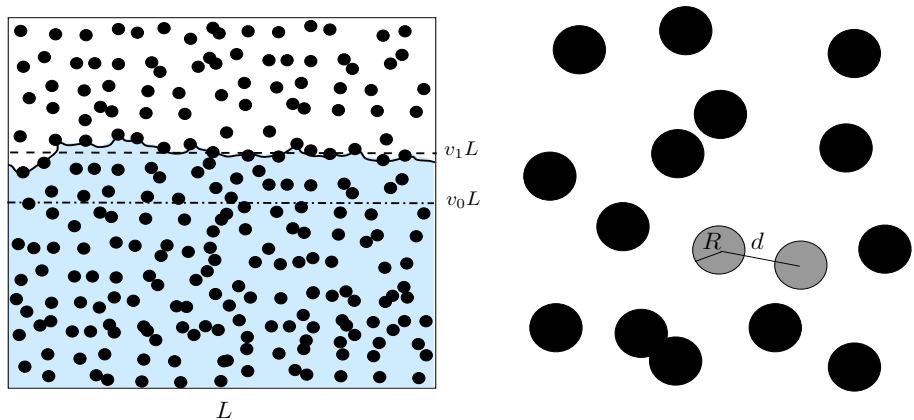}
\end{center}
\caption{Schematic picture showing the problem set-up and the three relevant length scales.
}
\label{basic:setting}%
\end{figure}

Let us fix the canonical basis $\{\mathbf{e}_{1},\mathbf{e}_{2}\}$ such that 
$\mathbf{e}_{1}=(1,0)$ and $\mathbf{e}_{2}=(0,1)$, then $\mathbf{e}_{2}$ 
points in the opposite direction to the gravitational field. We shall denote 
the coordinates of a point $x\in\mathbb{R}^{2}$ by $x=(x_{1},x_{2})$ in this basis.

Every elemental component of $\Gamma$, $\gamma$, satisfies the capillary 
equation (e.g. \cite{Finn})
\[
\sigma H = \rho\, g x_{2} - p\,,
\] 
that must be complemented with Young's condition at the grains and/or the walls
 of the container. We shall assume that the contact angle is the same at all 
solid surfaces. Here $H$ denotes the signed curvature of each $\gamma$ that forms 
$\Gamma$, $\rho$ is the (constant) density of the liquid, $g$ is the gravity 
constant and $\sigma$ denotes surface tension.
Here $p$ is the pressure associated to the volume fraction of the domain 
(including grains), $v_{1}\in (0,1)$, occupied by the liquid. Then 
\[
p = \rho \,g \lambda\quad \mbox{with}\quad \lambda=v_{1}L\,.
\]
We take as unit of length the average distance between the grains of the system, $d$. 
Up-scaling the length variables with $d$, we obtain that $\Gamma$ satisfies the 
non-dimensional equation
\begin{equation}
H=B\,(x_{2}-\lambda) \,. \label{S1E1}%
\end{equation}
subject to a contact angle boundary condition that we specify below. 
We obtain the non-dimensional number $B$, which is the Bond number 
\begin{equation}
B=\frac{\rho g d^{2}}{\sigma}\,, \label{S1E2}%
\end{equation}
and measures the balance between gravitational forces and surface tension in this units of length. 

For simplicity, we use the same notation for the non-dimensional variables. 
Also we let $L$ and $R$ be the non-dimensional size of the domain (i.e. $L/d$) and grain radius (i.e. $R/d$), 
respectively. 
This, in particular, means that, with this notation, $\lambda = v_{1}L$ in (\ref{S1E1}). 
We observe that $v_1$ depends on the configuration of grains, so if
we let $v_{0}\in[0,1]$ be the volume fraction of the domain occupied by the liquid, the volume 
occupied by the liquid is $v_{0}L^{2}$ and then 
\begin{equation}\label{v0:vs:v1}
v_{1}L^{2}\approx v_{0}L^{2}+n\pi R^{2}
\end{equation} 
is the volume below the interface $\Gamma$, if $n$ is the number of grains below $\Gamma$. 

We assume thereafter that $v_0$ is fixed, and we take the horizontal line 
$x_{2}=v_{1} L$($=\lambda$) as a reference height for each configuration 
of grains. Observe that this is a solution of (\ref{S1E1}) with $\alpha=0$ 
at the walls.
 
We also observe that with the new notation, where $L$ and $R$ are non-dimensional lengths, 
then the number of grains in the container is 
$N=\nu L^2$. Henceforth, with this scaling we can think of the equivalent problem where 
the liquid and air occupy some space in a square 
domain $[0,L]^{2}$ with $L\gg 1$, where $\nu L^2$ grains are uniformly distributed 
with an average distance $1$ between them.

 We indicate the position of a center by the variable $\xi=(\xi^{(1)},\xi^{(2)})\in\mathbb{R}^2$
 and write its boundary in the form $\partial B_{R}(\xi)$.
We take $\mathbf{n}$ as the unit normal vector to $\Gamma$ pointing in the direction of air, 
and $\mathbf{t}$ the tangent 
vector to $\Gamma$ such that $\{\mathbf{t},\mathbf{n}\}$ has the 
same orientation as the canonical basis (i.e. that $\det (\mathbf{t},\mathbf{n})=1$).

Let us denote by $\mathbf{N}$ the outer normal vector to $\partial B_{R}(\xi)$ at a point
 $P$ where $\Gamma$ and $\partial B_{R}(\xi)$ intersect (i.e. $P$ is a contact point).
 Let us denote by $\mathbf{t}_{c}$ the value of $\mathbf{t}$ at $P$. Then, the Young condition 
is defined as follows: if $\mathbf{N}\cdot\mathbf{t}_{c}\mathbf{\geq0}$, 
\begin{equation}
\mathbf{N}\wedge\mathbf{t}_{c}=\sin(\alpha)  \mathbf{e}_{3} 
\label{S2E2}%
\end{equation}
where $\alpha\in\left[-\frac{\pi}{2},\frac{\pi}{2}\right]$ is the contact
angle and where $\mathbf{e}_{3}=\mathbf{e}_{1}\wedge\mathbf{e}_{2}$. If
$\mathbf{N}\cdot\mathbf{t}_{c}<0$, then with the same choice of
$\mathbf{e}_{1}$, we set 
$\mathbf{N}\wedge\mathbf{t}_{c}=\sin(\alpha-\pi) \mathbf{e}_{3}$.

We shall distinguish different parameter regimes. The following observations are 
then in order. We recall that $B$ is the Bond number with respect to the scale 
given by the distance between grains. In the case $B\gg 1$ the gravitational effects 
are the dominant ones. In this limit the liquid-gas interfaces tend to be horizontal lines, 
corrected by boundary layers of height $1/\sqrt{B}$. In the limit $B\ll 1$, the interfaces 
are dominated by surface tension and this allows vertical variations in the connection between 
grains (of the order of the distance between the grains). Thus, unlike in the case $B\gg 1$, 
if $B\ll 1$ the connections can take place in any direction, not just horizontally.

The above regimes take into account the size of $B$ only, but we shall relate 
it to $L$ as well. In particular, the limit $L\to \infty$ gives an ever 
increasing number of grains and can in principle give rise to 'anomalous' 
situations (not sketched above) that we analyse in detail in Section~\ref{sec:stoch} and that we summarise below.

We distinguish several cases depending on the relative size of $B$ and $L$. 
In the cases where $B\gg 1$, we actually compare $L$ to 
$\min\{\sqrt{B},R^{-1}\}$, which relates the height of the boundary 
layers to $R$:

\begin{enumerate}
\item {\bf The regime $ 1\lesssim L\ll \min\{\sqrt{B},R^{-1}\}$, where $B\to\infty$ and $R\to 0$ 
(Section~\ref{regime:1} Theorem~\ref{theo:case1}):} 
In this case gravity dominates and the size of the container is not 
sufficiently large to allow significant capillarity effects. We find that 
as $B\to \infty$ there are configurations with probability 
close to $1$ for which the solution interfaces tend to the horizontal solution. 
One can in fact distinguish the cases $L=O(1)$ and $L\to\infty$, with either $R\ll1/\sqrt{B}$ 
or $R\gg 1/\sqrt{B}$, with the same result.

\item {\bf The regime $L=O(\min\{\sqrt{B},R^{-1}\})$ and $L\to\infty$ 
(Section~\ref{regime:2} Theorem~\ref{theo:case2}):} 
In this case we find that the probability of  finding solutions close to the horizontal line is close to one. 
On the other hand we also find that the probability of finding configurations 
that have a given number of elemental components is close to zero, but bounded from above 
by a Poisson distribution. Again, the distinction between the cases $1/\sqrt{B}\gg R$ 
and $1/\sqrt{B}\ll R$ is not significant.

\item  {\bf The regime $L \gg \min\{\sqrt{B},R^{-1}\}$ 
and $L\to \infty$:}

\begin{enumerate}
\item {\bf The case $B\to\infty$ (and $R\to 0$) (Section~\ref{regime:3} Theorem~\ref{theo:case3})}. 
This is a regime where gravity in principle dominates, we find an interval of $L$ in terms of $B$ where 
with probability close to one solutions stay in a narrow strip around the horizontal reference line. 
For $L$ above this range there is no guarantee that solutions are necessarily close to the horizontal
 reference line. If  $1/\sqrt{B}\gg R$, the interval is  $\sqrt{B}\ll L \ll\sqrt{B}\log(B)$, and if 
$1/\sqrt{B}\ll R$ with $R\to 0$, the result holds replacing $1/\sqrt{B}$ by $R$.

 \item {\bf The case $B=O(1)$}. In this case, capillary and gravity forces are comparable and it is 
not possible to derive general properties about the structure of the solution interfaces using 
perturbative methods. Therefore this regime is not included in our analysis.

\item {\bf The case $B\to 0$ (Section~\ref{regime:4} Theorem~\ref{percolation:theo})}. 
We will see that in this case the only meaningful limit is that of $L=O(1/B)$. When $B\to 0$, the maximum 
fluctuation of height of a solution $\Gamma$ are of the order of $1/B$. In this regime, we can show that
 with probability close to one, we can find interfaces that are suitably close to any smooth curve without
 self-intersections that connect (following its orientation) $\{0\}\times[0,L]$ to $\{L\}\times[0,L]$. 
\end{enumerate}
\end{enumerate}

In order to understand the probabilistic problem we need to analyse the elemental 
components that an interface $\Gamma$ can have. This is done in Section~\ref{sec:construction1} 
where we analyse the solution curves of (\ref{S1E1}) and characterise them in 
terms of a free parameter that controls the height reached by the curve and 
its curvature. In fact, equation (\ref{S1E1}) implies that the curvature of an 
elemental component increases in absolute value with the distance to the reference height. 
The main results and their proofs are presented 
in Section~\ref{sec:stoch}, where the regimes outlined above are studied separately. 
Finally, Appendix~\ref{sec:proofgeolemma} deals with the proof of a technical lemma that 
is needed in Section~\ref{regime:3}.

\section{Preliminary Analysis: Gluing of Solutions}
\label{sec:construction1} 
In this section we analyse the basic features of elemental components of an 
interface $\Gamma$. We first observe that equation (\ref{S1E1}) in the absence
 of boundary conditions can be solved explicitly in terms of elliptic integrals, 
see \cite{Myshkis}. One can achieve this by first introducing the variable 
$\bar{x}_{2}=x_{2}-\lambda$ and using the angle $\beta\in\lbrack-\pi,\pi]$ from 
$\mathbf{e}_{1}$ to the tangent vector $\mathbf{t}=(x_{1}^{\prime}(s),\bar{x}_{2}^{\prime}(s))$ 
where $s$ is the arc-length. Then multiplying (\ref{S1E1}) by $\bar{x}_{2}^{\prime}(s)$ 
and integrating once we get
\begin{equation}
B\frac{\bar{x}_{2}^{2}}{2}+\cos\beta=C\,, \label{A1E1}%
\end{equation}
where $C$ is a constant of integration that can have either sign or be zero,
but clearly $C\geq-1$. Then solving (\ref{S1E1}), with given initial
conditions, amounts to solving
\begin{equation}
x_{1}^{\prime}(s)=\cos\beta\quad\bar{x}_{2}^{\prime}(s)=\sin\beta
\label{t:beta:rela}%
\end{equation}
subject to the same initial conditions, together with (\ref{A1E1}) and a
compatible constant $C$. 

The are the obvious symmetries of equation
(\ref{S1E1}), $(x_{1},s)\rightarrow(2a-x_{1},-s)$ (for any $a\in\mathbb{R}$)
and $\bar{x}_{2}\rightarrow-\bar{x}_{2}$, that will be used to simplify the
computations below.

Using (\ref{t:beta:rela}) and the definition of arc-length, one can write
(\ref{A1E1}) as
\begin{equation}
\left[  1-\left(  C-B\frac{\bar{x}_{2}^{2}}{2}\right)  ^{2}\right]  \left(
dx_{1}\right)  ^{2}=\left(  C-B\frac{\bar{x}_{2}^{2}}{2}\right)  ^{2}\left(
d\bar{x}_{2}\right)  ^{2} \label{A1E2}%
\end{equation}
The fact that we are taking squares to go from (\ref{S1E1}) to (\ref{A1E2})
means that the set of solutions of the latter is larger than that of the
former. In the construction one has to impose that the resulting curves are
indeed solutions of (\ref{S1E1}), as we shall see. We also notice that all
solutions must have $|\bar{x}_{2}|\leq  \sqrt{2(C+1)/B}$. 

Below, we find the solutions of
(\ref{S1E1}) by gluing together pieces of curve solutions that are graphs of
a function of the form $x_{1}(\bar{x}_{2})$ and that solve (\ref{A1E2}). 
We then integrate (\ref{A1E2})
after taking the positive or negative square root of $(dx_{1})^{2}/(d\bar
{x}_{2})^{2}$; we use the equations:%
\begin{equation}
\frac{dx_{1}}{d\bar{x}_{2}}=
\pm
\frac{
C - B \displaystyle{ \frac{\bar{x}_{2}^{2}}{2} } 
}%
{
\sqrt{ 1 - \left(  C - B \displaystyle{ \frac{\bar{x}_{2}^{2}}{2} } \right)^{2} }
}\,.
\label{Dx1:Dx2}%
\end{equation}
Observe that $\beta$, for such graphs, is characterised by
\[
\frac{dx_{1}}{d\bar{x}_{2}}=\frac{1}{\tan\beta}\,.
\]

There are solutions, however, that cannot be constructed the way explained
above. These are the solutions with $\bar{x}_{2}\equiv0$, that is the curves
$(s,0)$ (air is on top of the liquid) and $(-s,0)$ (air is below the
liquid). The former corresponds to a solution of (\ref{A1E1}) with $C=1$, and
the latter to a solution of (\ref{A1E1}) with $C=-1$.

Next, we identify the points of the solution curves where either $x_{1}^{\prime}(s)=0$ or 
$\bar{x}_{2}^{\prime}(s)=0$, i.e. the values of $\bar{x}_{2}$ where either the left or the 
right-hand side of (\ref{A1E2}) vanishes. For simplicity, we use the following notation:
\begin{equation}\label{x2:mim}
x_2^{min}(C;B):= 
\left\{
\begin{array}{l}
\displaystyle{ \sqrt{\frac{2(C-1)}{B}}} \quad  \mbox{if} \quad C>1 \\
0 \quad  \mbox{otherwise} 
\end{array}
\right.
\end{equation}
and 
\begin{equation}\label{x2:med:min} 
x_2^{med}(C;B) := \sqrt{\frac{2C}{B}}\,, \quad 
x_2^{max}(C;B):=\sqrt{\frac{2(C+1)}{B}}
\end{equation}

\begin{lem}\label{hori:vert:points} 
The points at which either $x_{1}^{\prime}(s)=0$ or
$\bar{x}_{2}^{\prime}(s)=0$ are as follows:
\begin{enumerate}
\item If $C\geq0$, when $x_{1}^{\prime}(s)=0$ then $\bar
{x}_{2}=\pm x_2^{med}(C;B)$, and when $x_{2}^{\prime}(s)=0$ then either
$\bar{x}_{2}=\pm x_2^{min}(C;B)$ or $\bar{x}_{2}=\pm x_2^{max}(C;B)$,  
except if $0\leq C<1$, that $x_{2}^{\prime}(s)=0$
only occurs at $\bar{x}_{2}=\pm x_2^{max}(C;B)$.

\item If $C<0$, then there is no value of $s$ such that $x_{1}^{\prime}(s)=0$, and 
as before, $x_{2}^{\prime}(s)=0$ occurs at $\bar{x}_{2}=\pm x_2^{max}(C;B)$.
\end{enumerate}
\end{lem}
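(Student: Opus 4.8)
The plan is to convert the two vanishing conditions into conditions on the tangent angle $\beta$ by means of the first integral (\ref{A1E1}) and the relations (\ref{t:beta:rela}), and then simply read off the admissible values of $\bar{x}_2$. First I would record that on any solution curve of (\ref{S1E1}) one has, from (\ref{A1E1}),
\[
\cos\beta = C - B\frac{\bar{x}_{2}^{2}}{2}\,, \qquad \sin^{2}\beta = 1 - \left(C - B\frac{\bar{x}_{2}^{2}}{2}\right)^{2}\,,
\]
and hence, by (\ref{t:beta:rela}), $x_{1}^{\prime}(s)=\cos\beta$ and $\bar{x}_{2}^{\prime}(s)=\sin\beta$. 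This also shows that the coefficient $1-(C-B\bar{x}_{2}^{2}/2)^{2}$ of $(dx_{1})^{2}$ in (\ref{A1E2}) equals $\sin^{2}\beta$ and the coefficient $(C-B\bar{x}_{2}^{2}/2)^{2}$ of $(d\bar{x}_{2})^{2}$ equals $\cos^{2}\beta$, so the two phrasings of the statement coincide. I would also recall that any solution satisfies $C\geq -1$ and $|C-B\bar{x}_{2}^{2}/2|\leq 1$, so that $\bar{x}_{2}^{2}\leq 2(C+1)/B$ always, and $\bar{x}_{2}^{2}\geq 2(C-1)/B$ when $C>1$; this guarantees that the special values found below are genuinely attained along the curves.

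Next I would dispatch the condition $x_{1}^{\prime}(s)=0$, i.e. $\cos\beta=0$, i.e. $C-B\bar{x}_{2}^{2}/2=0$: this has a real root precisely when $C\geq 0$, giving $\bar{x}_{2}=\pm\sqrt{2C/B}=\pm x_{2}^{med}(C;B)$, which lies in the admissible range by the inequalities above. If $C<0$ then $\cos\beta = C-B\bar{x}_{2}^{2}/2\leq C<0$ on the whole curve, so $x_{1}^{\prime}(s)$ never vanishes. For $\bar{x}_{2}^{\prime}(s)=0$, i.e. $\sin\beta=0$, i.e. $(C-B\bar{x}_{2}^{2}/2)^{2}=1$, I would separate the two branches: $C-B\bar{x}_{2}^{2}/2=-1$ gives $\bar{x}_{2}^{2}=2(C+1)/B$, always solvable since $C\geq -1$, hence $\bar{x}_{2}=\pm x_{2}^{max}(C;B)$; while $C-B\bar{x}_{2}^{2}/2=+1$ gives $\bar{x}_{2}^{2}=2(C-1)/B$, solvable only when $C\geq 1$, hence $\bar{x}_{2}=\pm x_{2}^{min}(C;B)$ in that case.

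Assembling the cases then yields the statement: for $C\geq 1$ both families $\pm x_{2}^{min}$ and $\pm x_{2}^{max}$ occur as zeros of $\bar{x}_{2}^{\prime}$; for $0\leq C<1$ only $\pm x_{2}^{max}$ occurs, since $C-B\bar{x}_{2}^{2}/2=1$ has no real root (and I would remark that for $0\leq C<1$ the value $x_{2}^{min}(C;B)=0$ is \emph{not} a zero of $\bar{x}_{2}^{\prime}$, because there $\cos\beta=C\neq\pm1$); and for $-1\leq C<0$ likewise only $\pm x_{2}^{max}$ occurs while $x_{1}^{\prime}$ never vanishes, completing item~(ii). There is no substantial obstacle here: the only points needing care are the piecewise definition of $x_{2}^{min}$, the interplay between "solutions of (\ref{A1E2})" and genuine solutions of (\ref{S1E1}) (handled by working throughout with $\beta$, together with the trivial solutions $\bar{x}_{2}\equiv0$ noted in the text), and the boundary value $C=1$.
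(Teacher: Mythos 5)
Your proof is correct and is exactly the computation the paper intends (the lemma is stated without proof there, as an immediate consequence of the first integral (\ref{A1E1}) together with $x_1'(s)=\cos\beta$, $\bar x_2'(s)=\sin\beta$ from (\ref{t:beta:rela})). Your case analysis of $C-B\bar x_2^2/2=0$ versus $=\pm1$, including the observation that for $0\le C<1$ the value $x_2^{min}=0$ is only a notational convention and not an actual zero of $\bar x_2'$, matches the statement precisely.
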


The next lemma describes the way solutions of (\ref{S1E1}) can be obtained by
gluing solutions of (the branch equations) (\ref{Dx1:Dx2}) at points with
either vertical or horizontal tangent vector.

\begin{lem}[Gluing rule]\label{glue:rule} For $C>-1$, given the initial condition 
$(x_{1}(0),\bar{x}_{2}(0))=(a, x_2^{max}(C;B))$ with $a\in\mathbb{R}$ and
$(x_{1}^{\prime}(0),\bar{x}_{2}^{\prime}(0))=(-1,0)$ for (\ref{S1E1}), this
specifies the branch equation of (\ref{Dx1:Dx2}) that the corresponding
solution (\ref{S1E1}) satisfies. If this solution reaches a point with
$x_{1}^{\prime}(s)=0$ then it can only be continued further by solving the
same branch of (\ref{Dx1:Dx2}). On the other hand, if the curve reaches a
point with $\bar{x}_{2}^{\prime}(s)=0$ then it can only be continued further
by solving the branch equation with opposite sign.
\end{lem}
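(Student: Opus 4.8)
\textbf{Proof strategy for Lemma~\ref{glue:rule}.}

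The plan is to track the sign of $dx_1/d\bar{x}_2$ along the solution and show it can only change at the distinguished points identified in Lemma~\ref{hori:vert:points}, using the conserved quantity \eqref{A1E1} to pin down which branch of \eqref{Dx1:Dx2} is active. First I would observe that with the prescribed initial data $(x_1(0),\bar{x}_2(0))=(a,x_2^{max}(C;B))$ and velocity $(-1,0)$, equation \eqref{A1E1} forces $C-B\bar{x}_2^2/2=-1$ at $s=0$, so the right-hand side of \eqref{Dx1:Dx2} vanishes there while the quantity $C-B\bar{x}_2^2/2$ is strictly negative in a neighbourhood (since $\bar{x}_2$ starts decreasing away from its maximum $x_2^{max}$). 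Hence immediately after $s=0$ the graph satisfies the \emph{minus} branch of \eqref{Dx1:Dx2} if $\bar{x}_2$ decreases, or equivalently one reads off the branch from the sign of $\cos\beta$ via the relation $dx_1/d\bar{x}_2 = 1/\tan\beta$ together with $B\bar{x}_2^2/2+\cos\beta = C$. This establishes the first assertion.

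Next I would argue the continuation rules. The key point is that along a graph piece $x_1(\bar{x}_2)$ solving a fixed branch of \eqref{Dx1:Dx2}, the sign of $dx_1/d\bar{x}_2$ equals the sign of $C-B\bar{x}_2^2/2$, which is locally constant and changes sign only where $\bar{x}_2 = \pm x_2^{med}(C;B)$, i.e. precisely the points where $x_1'(s)=0$ by Lemma~\ref{hori:vert:points}(i). At such a point the curve has a vertical tangent ($\beta=\pm\pi/2$); continuing through it, $\bar{x}_2$ keeps the same monotonicity (because $\bar{x}_2'(s)=\sin\beta\neq 0$ there) while $dx_1/d\bar{x}_2$ flips sign — but this is exactly the transition from, say, $+\sqrt{(\cdot)^2}$ to $+\sqrt{(\cdot)^2}$ again with the numerator now of opposite sign: the \emph{same} branch equation \eqref{Dx1:Dx2} (with the same $\pm$ prefactor) accommodates this sign change automatically, since the numerator $C-B\bar{x}_2^2/2$ appears without an absolute value. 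So no branch switch is needed, proving the second assertion. Conversely, at a point with $\bar{x}_2'(s)=0$, i.e. $\bar{x}_2=\pm x_2^{max}(C;B)$ (or $\pm x_2^{min}(C;B)$ when $C>1$), $\cos\beta=\pm 1$, so $\bar{x}_2$ reverses its monotonicity while $\cos\beta$ (hence $C-B\bar{x}_2^2/2$) does not change sign; to keep $x_1$ evolving consistently as a function of the now-reversed $\bar{x}_2$, the quotient $dx_1/d\bar{x}_2$ must flip sign, which forces a switch to the opposite branch of \eqref{Dx1:Dx2}. Here I should also check that the glued curve is actually $C^1$ and solves \eqref{S1E1} (not merely \eqref{A1E2}): at a vertical-tangent point the tangent vector $\mathbf{t}=(0,\pm 1)$ matches from both sides, and at a horizontal-tangent point $\mathbf{t}=(\pm 1,0)$ matches, while the curvature sign prescribed by \eqref{S1E1} is consistent with the second derivative one computes from \eqref{Dx1:Dx2} on each side; this is the content one has to verify to exclude spurious solutions of the squared equation.

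The main obstacle I anticipate is the bookkeeping at the horizontal-tangent points: one must verify that the curve genuinely solves \eqref{S1E1} after gluing, i.e. that the sign of the curvature $H$ dictated by $H=B(x_2-\lambda)=B\bar{x}_2$ agrees on both sides of the join, rather than producing a corner or a curve that solves only the squared relation \eqref{A1E2}. Concretely, near $\bar{x}_2=\pm x_2^{max}$ the graph $x_1(\bar{x}_2)$ becomes singular ($dx_1/d\bar{x}_2\to\infty$), so it is cleaner to re-parametrise by arc length near such points and check directly from \eqref{t:beta:rela} and \eqref{A1E1} that $\beta$ passes smoothly through $\pm\pi$ (or $0$) with $\beta'(s)=H$ of the correct sign; this local analysis, together with the observation that the two branches of \eqref{Dx1:Dx2} correspond to $\beta\in(0,\pi)$ versus $\beta\in(-\pi,0)$, closes the argument. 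The cases $C\geq 0$ and $-1<C<0$ should be handled in parallel, with the simplification in the latter that $x_2^{med}$ and $x_2^{min}$ are absent so only horizontal-tangent gluings occur, by Lemma~\ref{hori:vert:points}(ii).
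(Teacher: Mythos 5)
Your proposal is correct and follows essentially the same route as the paper: the active branch is tracked through the sign of $\cos\beta=C-B\bar{x}_{2}^{2}/2$, a branch switch at a vertical-tangent point ($x_{1}^{\prime}(s)=0$) is excluded because it would flip the sign of the curvature $H=B\bar{x}_{2}$ in contradiction with (\ref{S1E1}), and a switch at a horizontal-tangent point ($\bar{x}_{2}^{\prime}(s)=0$) is forced because the graph reverses orientation while the curvature must be preserved. The only cosmetic slip is the assertion that the sign of $dx_{1}/d\bar{x}_{2}$ equals that of $C-B\bar{x}_{2}^{2}/2$ (this holds for the plus branch only, with the opposite sign on the minus branch), which does not affect the substance of the argument.
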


\begin{proof}
A solution stops being a graph of a function of the form $x_{1}(\bar{x}_{2})$
when, seen as a curve, $\bar{x}_{2}^{\prime}(s)=0$. It is easy to see that,
this never occurs at a point with $\bar{x}_{2}=0$. At those points we can
further continue the solution curve by gluing smoothly with another solution
curve that solves (\ref{Dx1:Dx2}) with the opposite sign (notice that these
curves, as graphs, have the opposite orientation, and hence the change of sign
in (\ref{Dx1:Dx2}) implies that they have the same curvature at that point).

On the other hand, at points with $x_{1}^{\prime}(s)=0$, the solution is still
a graph, and in principle, one could glue it to a solution that solves
(\ref{Dx1:Dx2}) with the opposite sign. However, if this happens at a point
with $\bar{x}_{2}\neq0$, then $H$ would change sign, which contradicts
(\ref{S1E1}). If, this happens at a point with $\bar{x}_{2}=0$, then $H$ keeps
its sign, but this also contradicts (\ref{S1E1}).
\end{proof}

We now construct pieces of curve solutions from which one can recover complete 
solutions of (\ref{S1E1}) (i.e. defined for all $s\in \mathbb{R}$) by the symmetries 
$(x_{1},s)\to(2a-x_{1},-s)$, for any $a\in\mathbb{R}$, and $\bar{x}_{2}\to-\bar{x}_{2}$. 
The next lemma gives these pieces for every $C>-1$ by distinguishing the cases $C>1$, 
$1>C>0$ and $0\geq C>-1$.

\begin{lem}[The pieces]\label{funda:piece} 
Given the initial condition $(x_{1}(0),\bar{x}_{2}(0))=(a,x_2^{max}(C;B)$, 
with $C>-1$ and $a\in\mathbb{R}$, and
$(x_{1}^{\prime}(0),\bar{x}_{2}^{\prime}(0))=(-1,0)$ for (\ref{S1E1}), the
elemental component of the solution curve is obtained by integrating (\ref{Dx1:Dx2}), 
with the minus sign, 
up to a $\bar{x}_{2}=0$ or up to $\bar{x}_{2}=x_2^{min}(C;B)$. Namely,

\begin{enumerate}
\item For $C>1$, the curve starts with $dx_{1}/d\bar{x}_{2}\geq0$ and reaches
$\bar{x}_{2}=x_2^{med}(C;B)$ for some value of $s$, for which
$x_{1}^{\prime}(s)=0$ and $\bar{x}_{2}^{\prime}(s)=-1$. The curve continues
with $dx_{1}/d\bar{x}_{2}\leq0$ $\ $up to a point where $\bar{x}_{2}%
=x_2^{min}(C;B)>0$ for some other value of $s$, for which
$x_{1}^{\prime}(s)=1$ and $\bar{x}_{2}^{\prime}(s)=0$.

\item For $C=1$, the curve starts with $dx_{1}/d\bar{x}_{2}\geq0$ and reaches
$\bar{x}_{2}=\sqrt{\frac{2}{B}}$, for which $x_{1}^{\prime}(s)=0$ and $\bar
{x}_{2}^{\prime}(s)=-1$. The curve continues with $dx_{1}/d\bar{x}_{2}\leq0$
and has an asymptote at $\bar{x}_{2}=0$, that is with $x_{1}(s)\to
\infty$ and $\bar{x}_{2}(s)\to 0^{+}$ as $s\to\infty$.

\item For $0<C<1$, the curve starts with $dx_{1}/d\bar{x}_{2}\geq 0$ and
reaches $\bar{x}_{2}=x_2^{med}(C;B)$ at some value of $s$, for which
$x_{1}^{\prime}(s)=0$ and $\bar{x}_{2}^{\prime}(s)=-1$. The curve continues
with $dx_{1}/d\bar{x}_{2}\leq0$ $\ $up to a point where $\bar{x}_{2}=0$ at
some other value of $s$, for which $x_{1}^{\prime}(s)>0$ and $\bar{x}_{2}^{\prime}(s)<0$.

\item For $-1<C\leq 0$, the curve starts with $dx_{1}/d\bar{x}_{2}\geq 0$ and
reaches $\bar{x}_{2}=0$ at some value of $s$, for which $x_{1}^{\prime}(s)=0$ and
$\bar{x}_{2}^{\prime}(s)=-1$, if $C=0$, and $x_{1}^{\prime}(s)<0$ and $\bar{x}_{2}^{\prime}(s)<0$, 
otherwise.
\end{enumerate}
\end{lem}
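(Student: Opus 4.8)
The plan is to integrate the branch equation (\ref{Dx1:Dx2}) with the minus sign, starting from the prescribed initial data, and to track where the expression $C - B\bar{x}_2^2/2$ changes sign or reaches $\pm 1$, since these are precisely the points flagged by Lemma~\ref{hori:vert:points} and governed by the gluing rule of Lemma~\ref{glue:rule}. Setting $u(s) := C - B\bar{x}_2^2(s)/2$, we note that at $\bar{x}_2 = x_2^{max}(C;B)$ we have $u = -1$, so $dx_1/d\bar{x}_2 = 0$ there (consistent with the initial tangent $(-1,0)$, i.e. a vertical tangent as a graph over $\bar{x}_2$), and $\bar{x}_2^{\prime}(0)=0$ with $x_1^{\prime}(0)=-1$. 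As $\bar{x}_2$ decreases from $x_2^{max}$, the quantity $u$ increases from $-1$; the sign of $dx_1/d\bar{x}_2$ equals the sign of $u$, and the curve remains a graph until $\bar{x}_2^{\prime}(s)$ would vanish again, which by Lemma~\ref{hori:vert:points} happens only at $\bar{x}_2 = \pm x_2^{max}$ or (when $C>1$) at $\bar{x}_2 = \pm x_2^{min}$, i.e. where $u = \mp 1$.

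Next I would split into the four cases according to the position of the zero of $u$ (at $\bar{x}_2 = x_2^{med}(C;B)$, where $u=0$) relative to the interval $[0, x_2^{max}]$ and to the other critical value $x_2^{min}$. For $C>1$: decreasing $\bar{x}_2$ from $x_2^{max}$, $u$ first increases through $0$ at $\bar{x}_2 = x_2^{med}\in(x_2^{min},x_2^{max})$ — here $dx_1/d\bar{x}_2 = \infty$, meaning $x_1^{\prime}(s)=0$ while $\bar{x}_2^{\prime}(s) = \pm 1$; one checks from the orientation that $\bar{x}_2^{\prime}(s) = -1$ — and then continues to increase, with $dx_1/d\bar{x}_2$ now negative, until $\bar{x}_2 = x_2^{min}$ where $u = 1$, giving a horizontal tangent $x_1^{\prime}(s) = 1$, $\bar{x}_2^{\prime}(s)=0$; by the gluing rule this is where the graph description must stop. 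For $C=1$: $x_2^{min}=0$, so after passing the vertical-tangent point at $\bar{x}_2 = x_2^{med} = \sqrt{2/B}$ the curve descends toward $\bar{x}_2 = 0$, where $u = C = 1$; here the integral $\int (dx_1/d\bar{x}_2)\,d\bar{x}_2$ of $u/\sqrt{1-u^2}$ diverges as $u\to 1^-$, producing the asymptote $x_1\to\infty$, $\bar{x}_2\to 0^+$. For $0<C<1$: the zero of $u$ at $x_2^{med} = \sqrt{2C/B}$ still lies in $(0,x_2^{max})$ (the vertical-tangent point), but $u$ never reaches $1$ on $(0,x_2^{max})$ since $x_2^{min}$ is not defined; the curve thus reaches $\bar{x}_2 = 0$ with $u = C\in(0,1)$, where $x_1^{\prime}(s)>0$ and $\bar{x}_2^{\prime}(s)<0$. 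For $-1<C\le 0$: now $u\le 0$ throughout $\bar{x}_2\in(0,x_2^{max}]$, so $dx_1/d\bar{x}_2\le 0$ everywhere and there is no intermediate vertical-tangent point; the curve reaches $\bar{x}_2 = 0$ directly, with $u = C$, so $x_1^{\prime}(s) = -C \le$ and for $C=0$ one gets $x_1^{\prime}(s)=0$, $\bar{x}_2^{\prime}(s)=-1$, while for $C<0$ one gets $x_1^{\prime}(s)<0$, $\bar{x}_2^{\prime}(s)<0$.

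In each case the claims about the signs of $x_1^{\prime}(s)$ and $\bar{x}_2^{\prime}(s)$ at the endpoint follow from the relation (\ref{t:beta:rela}), the identity $\cos\beta = u$ from (\ref{A1E1}), and a continuity/orientation argument: $\bar{x}_2^{\prime}(s) = \sin\beta$ is negative on the open arc because $\bar{x}_2$ is decreasing there, so $\beta\in(-\pi,0)$, which together with $\cos\beta = u$ pins down the signs at the limits. I also need to confirm that the arc is genuinely a solution of (\ref{S1E1}) and not merely of the squared equation (\ref{A1E2}); this is automatic here because we chose a single branch of (\ref{Dx1:Dx2}) and never changed sign within the arc, so the curvature $H$ retains the sign of $-(x_2 - \lambda)$ as required. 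The only mildly delicate point, and the one I would write out most carefully, is the $C=1$ asymptotics: one must verify that $\int^{\,\varepsilon}_{0} \frac{1 - B\bar{x}_2^2/2}{\sqrt{1 - (1 - B\bar{x}_2^2/2)^2}}\,d\bar{x}_2$ diverges as the lower limit tends to $0$, which it does since the integrand behaves like $(B\bar{x}_2^2)^{-1/2}\cdot\mathrm{const}$ near $\bar{x}_2 = 0$, i.e. like $1/\bar{x}_2$ up to a constant — giving the logarithmic blow-up of $x_1$ and hence the horizontal asymptote. Everything else is bookkeeping on the sign of $u$ against the three thresholds $-1$, $0$, $1$.
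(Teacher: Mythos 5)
You have the right strategy, and it is essentially the paper's: view the arc as a graph $x_1(\bar x_2)$ solving the minus branch of (\ref{Dx1:Dx2}), and locate the turning points by tracking $u=C-B\bar x_2^2/2$ against the thresholds $-1,0,1$, using $\cos\beta=u$ from (\ref{A1E1}) together with Lemmas~\ref{hori:vert:points} and~\ref{glue:rule} (the paper packages the intermediate step as convexity of $x_1(\bar x_2)$, which is the same information, since $d^2x_1/d\bar x_2^2=B\bar x_2(1-u^2)^{-3/2}$ on the minus branch). The problem is that the sign bookkeeping --- which is the entire content of the lemma --- is wrong as written. On the minus branch $dx_1/d\bar x_2=-u/\sqrt{1-u^2}$, and since $\bar x_2'(s)<0$ on the open arc one has $\mathrm{sign}(dx_1/d\bar x_2)=\mathrm{sign}(x_1'(s)/\bar x_2'(s))=-\mathrm{sign}(u)$; your rule ``the sign of $dx_1/d\bar x_2$ equals the sign of $u$'' is the opposite, and taken literally it yields $dx_1/d\bar x_2\le 0$ at the start of case (i) and throughout case (iv), contradicting the $dx_1/d\bar x_2\ge 0$ that the lemma asserts (your case (iv) states exactly this wrong inequality). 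You have also swapped $dx_1/d\bar x_2$ with its reciprocal at the turning points: at $\bar x_2=x_2^{max}(C;B)$, where $u=-1$, the formula gives $dx_1/d\bar x_2=+\infty$, not $0$, and at $\bar x_2=x_2^{med}(C;B)$, where $u=0$, it gives $0$, not $\infty$; and in case (iv) the terminal horizontal speed is $x_1'(s)=\cos\beta=u=C$, not $-C$. The tangent vectors you finally report do agree with the statement, but they do not follow from the rules you set up, so the case analysis must be redone with the signs made consistent.

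There is also one genuinely missing step in case (i): you assert that the graph continues ``until $\bar x_2=x_2^{min}$ where $u=1$'', but you must check that the curve actually attains that height at a finite value of $x_1$, i.e.\ that $\int u\,(1-u^2)^{-1/2}\,d\bar x_2$ converges at $\bar x_2=x_2^{min}(C;B)$. It does, because for $C>1$ the factor $1-u$ vanishes linearly in $\bar x_2-x_2^{min}$, so the integrand is $O\bigl((\bar x_2-x_2^{min})^{-1/2}\bigr)$; this linear versus quadratic vanishing of $1-u$ is precisely what separates $C>1$ from the asymptote at $C=1$, which is the one improper integral you do examine. The paper avoids the integrability question altogether by launching a second solution from height $x_2^{min}$ with horizontal tangent and identifying it with the first arc through the uniqueness of Lemma~\ref{glue:rule}; either route is acceptable, but one of them has to appear in your argument.
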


\begin{proof}
We only prove {\it (i)}, thus, let us assume that $C>1$. 
We first observe that due to the fact that the
maximum value that $\bar{x}_{2}$ can take is the initial one, the direction of
the initial tangent vector implies that away from, but close to, the initial
point $dx_{1}/d\bar{x}_{2}>0$. Then the curve solution can be seen as a graph
of a function of the form $x_{1}(\bar{x}_{2})$ that solves (\ref{Dx1:Dx2})
with the minus sign. Then the solution $x_{1}(\bar{x}_{2})$ is convex as long
as $\bar{x}_{2}\in(x_2^{min}(C;B), x_2^{max}(C;B))$ (see (\ref{S1E1}), that implies 
$\mbox{sign}(d^{2}x_{1}/d\bar{x}_{2}^{2})=\mbox{sign}(\bar{x}_{2})$). In particular, 
this curve reaches a point with $\bar{x}_{2}=x_2^{med}(C;B)$ (i.e. with 
$(x_{1}^{\prime}(s),\bar{x}_{2}^{\prime}(s))=(0,-1)$ for some value of $s$). In order to prove 
that the curve also reaches a point with $\bar{x}_{2}=x_2^{min}(C;B)$ (i.e. with 
$(x_{1}^{\prime}(s),\bar{x}_{2}^{\prime}(s))=(1,0)$ for some value of $s$), we construct 
a curve ending up at such a point, and by a similar argument this curve also passes 
through a point with $\bar{x}_{2} = x_2^{med}(C;B)$. Then, adjusting the initial value of 
$x_{1}$ so that both curves intersect at such a point (which they do tangentially), 
and using the uniqueness provided by Lemma~\ref{glue:rule} imply that they are the same solution curve.

All the other cases are straight forward by convexity of the solution curve.
\end{proof}

We show in figures~\ref{Cbigger1} (for $C>1$),~\ref{Cless1pos} (for $0<C<1$), ~\ref{Ce1} 
(for $C=1$), ~\ref{Ce0} (for $C=0$) and ~\ref{Cng} (for $-1<C<0$) the pieces described in Lemma~\ref{funda:piece}.
The next lemma describes how complete curves are obtained from these pieces.

\begin{figure}[hhh] 
\centering
\mbox{\subfigure[$C>1$]
{
\includegraphics[width=0.35\textwidth,height=.25\textwidth]{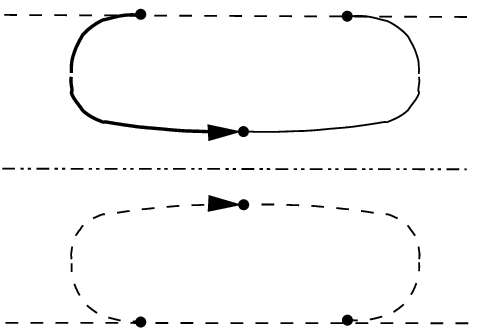}
\label{Cbigger1}
}
} 
\mbox{
\subfigure[$0<C<1$]
{
\includegraphics[width=0.35\textwidth,height=.25\textwidth]{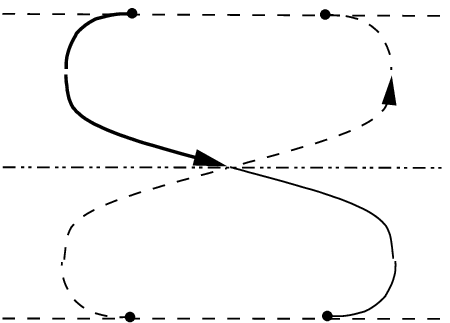}
\label{Cless1pos}
}
}\\
\mbox{
\subfigure[$C=0$]
{
\includegraphics[width=0.35\textwidth,height=.25\textwidth]{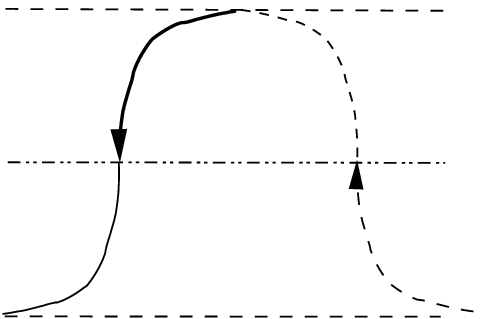}
\label{Ce0}
}
}
\mbox{
\subfigure[$-1<C<0$]
{
\includegraphics[width=0.35\textwidth,height=.25\textwidth]{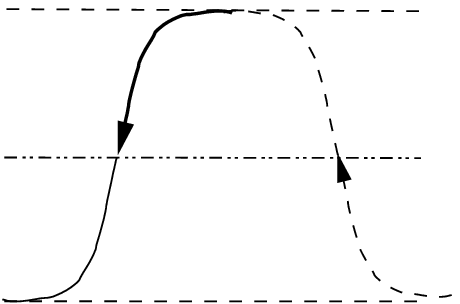}
\label{Cng}
}
}
\caption{This picture shows solution curves of (\ref{A1E1}) with $C>0$ and
$C\neq1$. The dashed-dotted line represents the line $\bar{x}_{2}= 0$, and
the dashed horizontal ones are the lines $\bar{x}_{2}=\pm x_2^{max}(C;B)$. The
thick solid lines represent the pieces of curves as specified in
Lemma~\ref{funda:piece}. The remaining lines give solutions obtained by the
symmetries $\bar{x}_{2}\to-\bar{x}_{2}$ and $x_{1}\to-x_{1}$, see Lemma~\ref{complete:curves}. 
Any other solution is a translation of the ones shown. Complete solutions curves to (\ref{S1E1}) 
are achieved by the smooth gluing of these ones at the points with $\bar{x}_{2}^{\prime}(s)=0$ on the dashed lines. 
\subref{Cbigger1} shows the case $C>1$ and \subref{Cless1pos} shows the case in which $0<C<1$. 
According to the orientation shown in these cases the air lies `on top' of the graphs shown. 
\subref{Ce0} shows the case $C=01$ and \subref{Cng} shows the case in which $-1<C<0$. 
In these cases, the air lies below the graphs.
}%
\label{Cneq1pos}%
\end{figure}

\begin{lem}[Complete curve solutions]\label{complete:curves} For each $C>-1$, 
the pieces of solution curves obtained in Lemma~\ref{funda:piece} give rise to complete curves, as follows:

\begin{enumerate}
\item There are three additional pieces of curve solutions obtained by the symmetries $x_{1}\to -x_{1}$ 
and $\bar{x}_{2}\to -\bar{x}_{2}$.


\item In all cases above the curves obtained can be glued together smoothly, translating 
as necessary in $x_{1}$, at the points where $\bar{x}_{2}^{\prime}(s)=0$, and at points with 
$\bar{x}_{2}=0$ (choosing pieces with the same tangent vector at the point).
\end{enumerate}
\end{lem}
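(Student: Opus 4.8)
The plan is to assemble complete solution curves of (\ref{S1E1}), i.e. solutions defined for all $s\in\mathbb{R}$, out of the four types of pieces catalogued in Lemma~\ref{funda:piece}, using the two symmetries $x_1\to 2a-x_1$ (with $s\to -s$) and $\bar x_2\to -\bar x_2$ of the equation, together with the gluing mechanism of Lemma~\ref{glue:rule}. First I would make item (i) precise: starting from the distinguished piece $P$ of Lemma~\ref{funda:piece} (with initial data $(a,x_2^{max}(C;B))$ and initial tangent $(-1,0)$), apply $\bar x_2\to -\bar x_2$ to get a piece $P'$ living in $\bar x_2\le 0$; apply $x_1\to -x_1$ (composed with a translation so the pieces share endpoints) to get $P''$; and apply both to get $P'''$. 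One checks directly from (\ref{S1E1}) that each of these four curves solves the capillary equation with the correct sign of $H$ — the point being that $\bar x_2\to-\bar x_2$ sends $H\mapsto -H$ and $x_2-\lambda\mapsto-(x_2-\lambda)$ simultaneously, so (\ref{S1E1}) is preserved, and similarly $x_1\to-x_1$ combined with $s\to -s$ preserves the signed curvature. This gives the "three additional pieces" claimed in (i).

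Next, for item (ii), I would glue. The pieces have two kinds of free endpoints: points where $\bar x_2^{\prime}(s)=0$ (these lie on the dashed lines $\bar x_2=\pm x_2^{max}(C;B)$ when $C\le 0$, and also on $\bar x_2=\pm x_2^{min}(C;B)$ when $C>1$), and — for $0<C<1$ and $-1<C\le 0$ — points where $\bar x_2=0$ with $\bar x_2^{\prime}(s)\ne 0$. At an endpoint with $\bar x_2^{\prime}(s)=0$, Lemma~\ref{glue:rule} says the continuation must switch to the opposite branch of (\ref{Dx1:Dx2}); but switching branch while reflecting $\bar x_2$ (as graphs, the two pieces have opposite orientation, so the sign flip in (\ref{Dx1:Dx2}) is exactly compensated) produces a curve with the same tangent vector and the same curvature there, hence a $C^1$ — and in fact smooth, by ODE uniqueness for (\ref{S1E1}) away from the degenerate locus — gluing. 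So at each such point I attach the mirror piece (under $\bar x_2\to -\bar x_2$, suitably translated in $x_1$). At a point with $\bar x_2=0$, I instead glue to a piece having the \emph{same} tangent vector there — concretely the piece obtained by applying $x_1\to -x_1$ and translating — which again matches tangent and curvature (both sides have $H$ of the same sign at $\bar x_2=0$, consistent with (\ref{S1E1})). Iterating this gluing, alternately reflecting in $\bar x_2$ at the turning points on $\bar x_2=\pm x_2^{max}$ (and $\pm x_2^{min}$ if $C>1$) and reflecting in $x_1$ at the points on $\bar x_2=0$, extends the curve indefinitely in both directions and yields a complete solution; translating the whole construction in $x_1$ gives every complete solution for that value of $C$.

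The main obstacle — really the only non-routine point — is verifying that the gluings are genuinely $C^1$ (and then smooth) and, relatedly, that the case analysis is exhaustive: one must be careful that at the $\bar x_2=0$ points (present only for $-1<C<1$) the curve is still a graph $x_1(\bar x_2)$ with nonzero $\bar x_2^{\prime}$, so the relevant matching is of tangent \emph{direction}, not of branch, and that picking the "same tangent vector" piece is consistent with Lemma~\ref{glue:rule} (which only legislates the $\bar x_2^{\prime}=0$ case). For $C=1$ and $C=-1$ the construction degenerates — the pieces have horizontal asymptotes and no turning point at finite height — and these give the flat solutions $(s,0)$ and $(-s,0)$ already singled out before Lemma~\ref{hori:vert:points}; I would mention this explicitly so the statement "for each $C>-1$" is not misread as covering the flat cases by gluing. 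Once the matching of tangents and curvatures is checked, smoothness and the fact that no further solutions arise both follow from uniqueness for the ODE (\ref{S1E1}) with non-degenerate data, exactly as in the proof of Lemma~\ref{funda:piece}.
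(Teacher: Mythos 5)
Your overall strategy is the right one, and it is essentially the (implicit) argument the paper relies on: the lemma is stated without proof, and its content is exactly that the four pieces come from the two symmetries recorded after (\ref{A1E1}), that continuation at points with $\bar{x}_{2}^{\prime}(s)=0$ is governed by Lemma~\ref{glue:rule} (switch branch of (\ref{Dx1:Dx2})), that continuation at $\bar{x}_{2}=0$ is by matching tangent vectors, and that smoothness then follows from uniqueness for (\ref{S1E1}) at non\-degenerate points. Your remarks on the degenerate case $C=1$ (asymptote at $\bar{x}_{2}=0$, so no crossing-point gluing; cf.\ Lemma~\ref{lemC1} and Remark~\ref{C1:complete}) are also apt.

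There is, however, one concrete error: you have attached the two reflections to the wrong gluing points, and taken literally both attachments fail. At a turning point $\bar{x}_{2}=x_2^{max}(C;B)>0$ (or $x_2^{min}(C;B)>0$ when $C>1$) the continuation is the reflection of the piece about the \emph{vertical} line through that point, i.e.\ the piece obtained from $(x_{1},s)\to(2a-x_{1},-s)$; the piece you propose to attach there, the image under $\bar{x}_{2}\to-\bar{x}_{2}$, lives at heights in $[-x_2^{max}(C;B),0]$ (resp.\ $[-x_2^{max},-x_2^{min}]$), and since the equation is only translation-invariant in $x_{1}$, no admissible translation can place it at height $x_2^{max}(C;B)$. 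Conversely, at a crossing point on $\bar{x}_{2}=0$ (present only for $-1<C<1$) the continuation must enter the opposite half-plane with the same tangent vector, and this is the piece obtained by composing \emph{both} symmetries, i.e.\ a point reflection about the crossing point; the image under $x_{1}\to-x_{1}$ alone stays in $\bar{x}_{2}\geq 0$, and its velocity at the matching endpoint is $(x_{1}^{\prime},-\bar{x}_{2}^{\prime})$, which points back into the upper half-plane, so it cannot continue the curve downwards. Your stated selection criteria (opposite branch of (\ref{Dx1:Dx2}) at $\bar{x}_{2}^{\prime}=0$; same tangent vector at $\bar{x}_{2}=0$) are correct and, applied consistently, single out the right pieces, so the repair is purely local --- but as written the construction does not go through.
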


\begin{rem}\label{upside:down}
\begin{enumerate}
\item The complete curves that we obtain are open and periodic except for a particular 
value of $C$ for which the complete curves are closed: by continuity, there exists a value 
of $C\in(0,1)$, say $C_{0}$ (actually $C_{0}\approx 0.6522$, see \cite{Myshkis}), such that 
the corresponding elemental component satisfies that 
$x_{1}(0)=x_{1}\left( x_2^{max}(C;B) \right)$. In particular, if $C>C_{0}$, the curve obtained 
in Lemma~\ref{complete:curves} has, globally, increasing $x_{1}$ as $s\to\infty$, whereas 
the corresponding curve if $C<C_{0}$ has, globally, decreasing $x_{1}$ as $s\to\infty$. 

Physically, one can interpret the solution curves with $C<C_{0}$ as describing a situation 
in which air lies below the liquid, and {\it vice versa} for the ones with $C>C_{0}$. 

\item It is also worth mentioning that the complete curve obtained for $C\in(-1,0]$ 
that oscillates around $\bar{x}_{2}=0$ does not have self-intersections and can in 
principle connect two grains very far apart (in the horizontal direction).   
\end{enumerate}
\end{rem}

The next lemma gives the range of $\beta$ that complete curve have depending on $C$:
\begin{lem}[Range of $\beta$]\label{beta:values} For the complete curves obtained in 
Lemma~\ref{complete:curves} the values of the angle $\beta$ range in
different intervals depending on $C$:

\begin{enumerate}
\item If $C>1$, $\beta$ takes all values in $[-\pi,\pi]$.

\item If $C=1$, $\beta$ takes all values in $[-\pi,0)\cup(0,\pi]$, unless the
solution is the horizontal line, for which $\beta\equiv0$.

\item If $-1\leq C<1$, $\beta$ takes all values in $[-\pi,-\eta]\cup
\lbrack\eta,\pi]$ for some $\eta\in(0,\pi]$, and $\eta$ increases when
$C$ decreases (in fact, $\eta=\pi$ if and only if $C=-1$, i.e. $\bar{x}_{2}\equiv0$ with $\mathbf{n}$ 
pointing in the direction of gravity, and $\eta=\pi/2$ if and only if $C=0$).
\end{enumerate}
\end{lem}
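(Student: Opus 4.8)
The plan is to read off the range of $\beta$ directly from the conserved quantity~(\ref{A1E1}), namely $\cos\beta = C - B\bar{x}_2^2/2$, together with the admissible range of $\bar{x}_2$ on each complete curve, which is controlled by the functions $x_2^{min}$, $x_2^{med}$, $x_2^{max}$ introduced above and by Lemma~\ref{funda:piece} and Lemma~\ref{complete:curves}. Since $\cos\beta$ is a decreasing function of $|\bar{x}_2|$ along~(\ref{A1E1}), the set of values taken by $\cos\beta$ is the interval $[\,C - B(\bar{x}_2^{max})^2/2,\ C - B(\bar{x}_2^{min})^2/2\,]$ where $\bar{x}_2^{max}$ and $\bar{x}_2^{min}$ are the largest and smallest values of $|\bar{x}_2|$ attained on the complete curve. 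First I would recall from (\ref{x2:med:min}) that $B(x_2^{max})^2/2 = C+1$, so the minimal value of $\cos\beta$ is always $C-(C+1) = -1$, i.e. $\beta = \pm\pi$ is always attained (this is the point with horizontal tangent at maximal height, which by Lemma~\ref{funda:piece} always lies on the curve). Thus the only question is the maximal value of $\cos\beta$, equivalently the smallest $|\bar{x}_2|$ reached.

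For case (i), $C>1$: by Lemma~\ref{funda:piece}(i) the elemental piece descends to $\bar{x}_2 = x_2^{min}(C;B) = \sqrt{2(C-1)/B}>0$, where $x_1'(s)=1$, $\bar{x}_2'(s)=0$, i.e. $\beta=0$; hence $\cos\beta$ attains its maximum $1$ and, since $\cos\beta$ is continuous and sweeps $[-1,1]$, while the sign of $\beta$ is determined by the sign of $\bar{x}_2'(s)=\sin\beta$ which changes as the curve oscillates, $\beta$ takes every value in $[-\pi,\pi]$. For case (ii), $C=1$: here $x_2^{min}=0$ but it is only approached asymptotically (Lemma~\ref{funda:piece}(ii)): $\bar{x}_2(s)\to 0^+$ with $x_1(s)\to\infty$, so $\cos\beta\to 1^-$ but $\beta=0$ is never actually attained on a non-horizontal curve; combined with the symmetry $\bar{x}_2\to-\bar{x}_2$ this gives the punctured range $[-\pi,0)\cup(0,\pi]$, with the degenerate horizontal solution ($C=1$, $\bar{x}_2\equiv 0$) having $\beta\equiv 0$. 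For case (iii), $-1\le C<1$: now $x_2^{min}(C;B)=0$ by definition~(\ref{x2:mim}), and by Lemma~\ref{funda:piece}(iii)--(iv) the curve does reach $\bar{x}_2=0$ (at a point with $x_1'(s)\ne 0$ when $-1<C<1$, $C\ne 0$), so the maximal value of $\cos\beta$ is $C-0 = C < 1$. Setting $\eta := \arccos C \in (0,\pi]$, the range of $\cos\beta$ is $[-1,C]$, hence $\beta\in[-\pi,-\eta]\cup[\eta,\pi]$; since $\arccos$ is strictly decreasing, $\eta$ increases as $C$ decreases, with $\eta=\pi$ exactly when $C=-1$ (the curve $\bar{x}_2\equiv 0$ with $\mathbf{n}$ along gravity, $\beta\equiv\pm\pi$) and $\eta=\pi/2$ exactly when $C=0$.

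The only mildly delicate point, and the one I would be most careful about, is justifying that every intermediate value of $\beta$ in the stated intervals is actually attained with \emph{both} signs where claimed (rather than merely $\cos\beta$ sweeping an interval): this is where one must invoke the oscillatory structure of the complete curve from Lemma~\ref{complete:curves}, which glues the basic piece and its three reflected copies, so that $\bar{x}_2$ (and hence $\sin\beta=\bar{x}_2'$) changes sign along the curve, forcing $\beta$ to pass continuously through both $[\eta,\pi]$ and $[-\pi,-\eta]$. The boundary cases $C=1$ (asymptotic, not attained) and $C=-1$ (degenerate flat curve) should be singled out explicitly, exactly as in the statement, and the claim $x_1(0)=x_1(x_2^{max})$-type closed-curve value $C_0$ from Remark~\ref{upside:down} plays no role here since it does not affect which $\beta$ are realized. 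I expect the whole argument to be short once the conserved quantity~(\ref{A1E1}) and the endpoint data from Lemma~\ref{funda:piece} are in hand.
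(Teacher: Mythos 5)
Your proof is correct: reading the range of $\cos\beta$ off the first integral (\ref{A1E1}) together with the admissible range of $\bar{x}_{2}$ from Lemma~\ref{funda:piece}, and identifying $\eta=\arccos C$, is exactly the argument implicit in the paper, which states this lemma without proof. The care you take over attaining both signs of $\beta$ via the oscillation of the complete curve, and over the asymptotic (non-attained) value $\beta=0$ when $C=1$, is precisely the right level of detail.
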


\begin{rem}\label{beta:range}
\begin{enumerate}
\item We observe that by the determination of $\beta$ that we have chosen, its
value jumps from $\pi$ to $-\pi$ at gluing points with 
$\bar{x}_{2}=x_2^{max}(C;B)$, and, similarly, its value jumps from $-\pi$ to 
$\pi$ at gluing points with $\bar{x}_{2}=-x_2^{max}(C;B)$.

\item We observe that given the contact angle $\alpha$ and the radius of grains $R$,
the contact point at a grain is given by the angle $\rho_c$ measured from the horizontal placed 
at the center of the grain to the contact point, 
and satisfies
\begin{equation}\label{beta:rho}
\rho_c = \alpha - \beta_c
\end{equation} 
where $\beta_c$ is $\beta$ at the contact point.

Then Lemma~\ref{beta:values} gives the possible range of $\rho_c$ for a given $\alpha$· 
In particular, if $C\geq 1$ $\rho_c$ covers one period 
and unique solution exists between two given grains that are sufficiently close by adjusting $C$.   
If $-1\leq C<1$, then $\rho_c\in[\alpha-\pi,\alpha-\eta]\cup[\alpha+\eta,\alpha+\pi]$. This means 
that there are some restrictions on the contact surface $\partial B_R(\xi)$ for grains that lie 
near the horizontal if air lies below the interface.  

\end{enumerate}
\end{rem}

We give the solutions of (\ref{A1E1}) with $C=1$ explicitly in the next
lemma, see also Figure~\ref{Ce1} 
\begin{figure}[hhh]
\begin{center}
\mbox{
\includegraphics[width=0.40\textwidth,height=.22\textwidth]{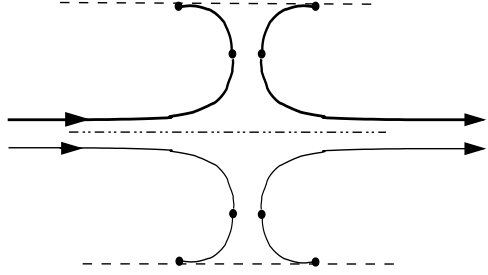}
}
\end{center}
\caption{This picture shows four solution branches of (\ref{A1E1}) with $C=1$.
One if then is, in this case, $\bar{x}_{2}\equiv 0$ (dashed-dotted line). 
The complete solution curves to (\ref{S1E1}) are achieved by smoothly gluing at $\bar{x}_{2}=x_2^{med}(C;B)$ 
or at $\bar{x}_{2}=-x_2^{med}(C;B)$ (dashed line), depending on the sign, and translating in the direction of 
$x_1$ copies of the ones shown. }
\label{Ce1}
\end{figure}

\begin{lem}\label{lemC1} The solutions of (\ref{A1E1}) with $C=1$ are explicitly given by
the horizontal line $\bar{x}_{2}\equiv0$ oriented with $\mathbf{n}%
=\mathbf{e}_{2}$, and by two one-parameter families of solution branches:%
\[
x_{1}^{+}(\bar{x}_{2};a)=-\frac{1}{\sqrt{B}}F(\sqrt{B}\bar{x}_{2}%
)+a,\;\;\bar{x}_{2}\in\left(  -\frac{2}{\sqrt{B}},0\right)  \cup\left(
0,\frac{2}{\sqrt{B}}\right)  ,
\]%
\[
x_{1}^{-}(\bar{x}_{2};a)=\frac{1}{\sqrt{B}}F(\sqrt{B}\bar{x}_{2}%
)+a,\;\;\bar{x}_{2}\in\left(  -\frac{2}{\sqrt{B}},0\right)  \cup\left(
0,\frac{2}{\sqrt{B}}\right)  \,,
\]
where the parameter $a$ is a real number and $F$ is given by%
\begin{equation}\label{F:def}
F(Z)=\frac{1}{2}\log\left(  \frac{2-\sqrt{4-Z^{2}}}{2+\sqrt{4-Z^{2}}}\right)
+\sqrt{4-Z^{2}},\;Z\in(-2,0)\cup(0,2)\,. 
\end{equation}

Moreover, $F$ satisfies the following properties: $\lim_{Z\to-2^{+}}F(Z)=\lim_{Z\to 2^{-}}F(Z)=0$ 
and $\lim_{Z\to 0^{\pm}}F(Z)=-\infty$. Observe also that $F^{\prime}(Z)=0$ at $Z=\pm\sqrt{2}$ and
that $\lim_{Z\to -2^{+}}F^{\prime}(Z)=\infty$, $\lim_{Z\to 2^{-}}F^{\prime}(Z)=-\infty$ and 
$\lim_{Z\to 0^{\pm}}F^{\prime}(Z)=\pm\infty$. Regarding convexity; $F^{\prime\prime}(Z)<0$ with
$Z\in(-2,0)\cup(0,2)$. 
\end{lem}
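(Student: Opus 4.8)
The statement comprises three essentially independent verifications, and the plan is to treat them in turn: (a) that $\bar x_2\equiv0$ oriented by $\mathbf n=\mathbf e_2$ is the unique solution with $\bar x_2$ identically zero and $C=1$; (b) that the two one-parameter families $x_1^\pm(\cdot\,;a)$ exhaust the remaining solution branches with $C=1$; and (c) the list of analytic properties of $F$. Part (a) is immediate from (\ref{A1E1}) together with (\ref{t:beta:rela}): setting $\bar x_2\equiv0$ with $C=1$ forces $\cos\beta\equiv1$, hence $\beta\equiv0$, so $\mathbf t=\mathbf e_1$ and therefore $\mathbf n=\mathbf e_2$ by the chosen orientation; since $H\equiv0=B\cdot0$, equation (\ref{S1E1}) holds, and no other constant function $\bar x_2$ is compatible with $C=1$.

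For (b) I would work from the branch equation (\ref{Dx1:Dx2}) with $C=1$ and carry out the rescaling $Z=\sqrt B\,\bar x_2$ used throughout Section~\ref{sec:construction1}. The one computation needed is $1-\big(1-\tfrac{Z^2}{2}\big)^2=\tfrac{Z^2(4-Z^2)}{4}$, after which (\ref{Dx1:Dx2}) becomes $\frac{dx_1}{d\bar x_2}=\pm\frac{2-Z^2}{|Z|\sqrt{4-Z^2}}$, i.e. $dx_1=\pm\frac{1}{\sqrt B}\,\frac{2-Z^2}{|Z|\sqrt{4-Z^2}}\,dZ$. One then verifies by a direct chain-rule computation that $F'(Z)=\frac{2-Z^2}{Z\sqrt{4-Z^2}}$ on $(-2,0)\cup(0,2)$; the only point worth flagging is that the derivative of the logarithmic term in (\ref{F:def}) collapses because $4-\big(\sqrt{4-Z^2}\big)^2=Z^2$. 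Integrating on an interval where $\bar x_2$ keeps a fixed sign, absorbing $|Z|$ into the $\pm$ and using that $F$ is even to write one formula valid on both $(-2/\sqrt B,0)$ and $(0,2/\sqrt B)$, gives exactly $x_1=\mp\frac1{\sqrt B}F(\sqrt B\,\bar x_2)+a$. Two things then remain. First, any solution with $C=1$ is either the horizontal line or, on any interval with $\bar x_2\neq0$, a graph $x_1(\bar x_2)$ of this form, since away from $\bar x_2=0$ the curve can always be written as such a graph solving (\ref{Dx1:Dx2}). Second, one must check that these graphs solve (\ref{S1E1}) and not merely the squared relation (\ref{A1E2}); this holds because a single branch of the square root has been fixed throughout each interval, so differentiating the resulting identity recovers (\ref{S1E1}) with the correct sign of $H$ — equivalently, this is the case $C=1$ of Lemma~\ref{glue:rule} and Lemma~\ref{funda:piece}, which also identifies $\bar x_2=\pm x_2^{med}(1;B)=\pm\sqrt{2/B}$ (where $F'=0$) as the points at which the branches glue.

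Part (c) is elementary one-variable calculus on $F$ as defined in (\ref{F:def}). The limits follow by inspection: as $Z\to\pm2$ one has $\sqrt{4-Z^2}\to0$, so the argument of the logarithm tends to $1$ and $F\to0$; as $Z\to0$, $\sqrt{4-Z^2}\to2$, so that argument tends to $0^+$ and $F\to-\infty$. From $F'(Z)=\frac{2-Z^2}{Z\sqrt{4-Z^2}}$ one reads off that $F'$ vanishes exactly at $Z=\pm\sqrt2$, and by tracking the signs of the numerator and denominator that its one-sided limits are $\pm\infty$ as $Z\to0^\pm$ and $\mp\infty$ as $Z\to\pm2$. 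For concavity I would compute $F''$ directly and factor out $(4-Z^2)^{-3/2}$, obtaining $F''(Z)=-2(4-Z^2)^{-3/2}\big[(4-Z^2)+(2-Z^2)^2/Z^2\big]$; both bracketed terms are positive on $(-2,0)\cup(0,2)$, so $F''<0$ there.

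There is no deep obstacle: this is a verification lemma. The parts that demand care are the sign and branch bookkeeping in passing from (\ref{Dx1:Dx2}) to the closed form (keeping track of $|Z|$ versus $Z$ and of which branch of (\ref{Dx1:Dx2}) is active on each side of $\bar x_2=0$), the attendant check that the explicit curves genuinely satisfy (\ref{S1E1}) rather than the enlarged solution set of (\ref{A1E2}), and the mechanical but slightly lengthy computation of $F''$.
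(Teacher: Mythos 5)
Your proposal is correct: the paper in fact states Lemma~\ref{lemC1} without proof, and your verification is exactly the computation the authors leave implicit — specialising the branch equation (\ref{Dx1:Dx2}) to $C=1$, rescaling $Z=\sqrt{B}\bar{x}_{2}$ via $1-(1-Z^{2}/2)^{2}=Z^{2}(4-Z^{2})/4$, checking $F'(Z)=(2-Z^{2})/(Z\sqrt{4-Z^{2}})$, and invoking the gluing lemmas to confirm the curves solve (\ref{S1E1}) and not just (\ref{A1E2}). All the sign bookkeeping and the identity $Z^{2}(4-Z^{2})+(2-Z^{2})^{2}=4$ underlying $F''(Z)=-8Z^{-2}(4-Z^{2})^{-3/2}<0$ check out.
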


\begin{rem}\label{C1:complete}
\begin{enumerate}
\item We notice that the functions $x_{1}^{+}$ and $x_{1}^{-}$ can be extended to
$\bar{x}_{2}=\pm 2/\sqrt{B}$, with $x_{1}^{+}(\pm2/\sqrt{B};a)=a$ and
$x_{1}^{-}(\pm2/\sqrt{B};a)=a$, but the derivative at those points is not
finite. We also observe that gluing together the branch $x_{1}^{+}$ and
$x_{1}^{-}$ with $\bar{x}_{2}\in[-2/\sqrt{B},0)$, and the same value of
$a$ we obtain a complete planar curve. Similarly, $x_{1}^{+}$ and $x_{1}^{-}$
with $\bar{x}_{2}\in(0,2/\sqrt{B}]$ for the same $a$ give another complete
curve, that can be simply recovered from the previous one by the
transformation $\bar{x}_{2}\to -\bar{x}_{2}$.

\item We observe that the limit $C\to 1$ can be studied independently of 
the value of $B$. One can easily show that solutions with positive $C$, 
and either $C>1$ or $C<1$ converge to the solutions found in Lemma~\ref{lemC1} 
on compact sets of $x_2$. 
\end{enumerate}
\end{rem}

The rest of the section is devoted to deriving estimates on the maximal horizontal length of 
a solution curve depending on the value of $C$. 
First we derive the horizontal length at points where the height is minimum or $0$ to 
points where the height is $x_2^{med}$ or $x_2^{max}$: 
\begin{lem}\label{C:estimates}
Given a elemental component solution as in Lemma~\ref{funda:piece}:
\begin{enumerate}
\item If $-1<C\leq 0$, then
\[    
\log\left(\frac{2+\sqrt{2}}{1+\sqrt{3}}\right)\frac{1}{\sqrt{B}}
<
x_{1}( x_2^{max}(C;B)) - x_{1}(0)
<
\frac{\pi}{\sqrt{2B}}\,.
\]

\item If $0<C<1$, then
\[
\frac{1}{\sqrt{B}}h_{1}(C)
<
x_{1}(0)-x_{1}\left( x_2^{med}(C;B)\right)
<
\frac{1}{\sqrt{B}}h_{2}(C)\,.
\]
where $h_{i}(C)\to 0$ as $C\to 0$ and $h_{i}(C)=O(-\log(\sqrt{1-C}))$ as $C\to 1^{-}$, for both $i=1,2$.

\item If $C>1$, then there exist non-negative functions $h_{3}(C)$ and $h_{4}(C)$,
continuous in $C$, such that, 
\[
\frac{1}{\sqrt{B}}h_{3}(C)
<
x_{1}\left(x_2^{min}(C;B)\right)-x_{1}\left( x_2^{med}(C;B)\right)  
<
\frac{1}{\sqrt{B}}h_{4}(C)\,,
\]
where $h_{i}(C)=O(-\log\sqrt{C-1}))$ as $C\to 1^{+}$,
for both $i=3,4$ and $h_{i}(C)= O(1/\sqrt{C})$ as $C\to \infty$, for both $i=3,4$. 

\end{enumerate}
\end{lem}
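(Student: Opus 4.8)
The plan is to reduce all three estimates to the study of the single integral that gives the horizontal displacement along a graph-solution of \eqref{Dx1:Dx2}, namely
\[
x_1(\bar x_2^{(b)}) - x_1(\bar x_2^{(a)}) = \int_{\bar x_2^{(a)}}^{\bar x_2^{(b)}} \frac{C - B\frac{t^2}{2}}{\sqrt{1-\left(C-B\frac{t^2}{2}\right)^2}}\,dt,
\]
taken over the appropriate monotone piece identified in Lemma~\ref{funda:piece}. The first move is the change of variables $u = \sqrt{B}\,t$ (equivalently $w = C - B t^2/2$), which scales out $B$ entirely: every displacement becomes $\frac{1}{\sqrt{B}}$ times an integral depending only on $C$. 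This immediately explains the $\frac{1}{\sqrt B}$ prefactor in all three bounds and isolates the $C$-dependence into dimensionless integrals $h_i(C)$, which are then manifestly continuous and non-negative (the integrand on each monotone piece has a fixed sign, matching the sign of $dx_1/d\bar x_2$ recorded in Lemma~\ref{funda:piece}).

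For part (i), $-1 < C \le 0$: here the piece runs from $\bar x_2 = 0$ (where $w = C$, so $|w|\le 1$ with equality only at $C=-1$) up to $\bar x_2 = x_2^{max}$ (where $w = -1$). On this range $w = C - Bt^2/2$ decreases from $C\le 0$ to $-1$, so the integrand $\frac{w}{\sqrt{1-w^2}}$ is bounded between its values at $C=-1$ (worst case, giving the explicit $\log\!\big(\tfrac{2+\sqrt2}{1+\sqrt3}\big)$ constant from an elementary antiderivative at the endpoint configuration $C=0$ versus $C\to -1$) and $C=0$ (giving the $\pi/\sqrt{2B}$ bound, which is just $\frac{1}{\sqrt B}\int_0^{\sqrt 2} \frac{-(-u^2/2)}{\sqrt{1-u^4/4}}\,du$ evaluated/bounded by a quarter-period of an elliptic-type integral). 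The point is that both endpoints of the bound are attained in the limiting cases $C=0$ and $C\to -1$, and the monotonicity of the integral in $C$ on this interval — which follows from differentiating under the integral sign and checking the sign — gives the strict inequalities in between.

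For parts (ii) and (iii) the structure is the same but the integrable singularity of $\frac{w}{\sqrt{1-w^2}}$ at $w=\pm 1$ must be tracked to extract the logarithmic blow-up as $C\to 1^{\pm}$. In case (ii), $0<C<1$, the relevant piece goes from $\bar x_2 = x_2^{med}$ (where $w=0$) to $\bar x_2 = 0$ (where $w = C < 1$), so there is no singularity for fixed $C<1$, $h_i(C)\to 0$ as $C\to 0$ because the interval of integration shrinks, and as $C\to 1^-$ the endpoint $w=C$ approaches the singular value $w=1$, producing $h_i(C) = O(-\log\sqrt{1-C})$ after estimating $\int^{C} \frac{dw}{\sqrt{1-w^2}}\sim -\tfrac12\log(1-C)$ near that endpoint (times the Jacobian $dt/dw$, which is smooth and bounded there). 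In case (iii), $C>1$, the piece runs from $\bar x_2 = x_2^{med}$ ($w=0$) to $\bar x_2 = x_2^{min}$ ($w=1$); the singularity at $w=1$ is now genuinely at an endpoint for every $C>1$, but the Jacobian $\big|\frac{dt}{dw}\big| = \frac{1}{\sqrt{2B(C-w)}}$ is bounded near $w=1$ uniformly for $C$ bounded away from $1$, so one splits the integral near $w=1$ and near $w=0$: the near-$1$ part contributes the $O(-\log\sqrt{C-1})$ term as $C\to 1^+$ (same computation as in (ii)), and for the large-$C$ asymptotics one notes that the whole monotone piece has $\bar x_2$ ranging over an interval of length $x_2^{med}-x_2^{min}=\frac{1}{\sqrt B}(\sqrt{2C}-\sqrt{2(C-1)})=O(1/\sqrt C)$ while the integrand is bounded by $1$ in absolute value (since $|dx_1/d\bar x_2| = |1/\tan\beta|$ and $|w|\le 1$), giving the crude but sufficient bound $h_i(C) = O(1/\sqrt C)$; continuity of $h_3,h_4$ in $C$ on $(1,\infty)$ is then clear from continuity of the integrand and the endpoints.

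The main obstacle is the bookkeeping near the turning points $w=\pm1$: one has to be careful that the square-root singularity of $\frac{w}{\sqrt{1-w^2}}$ is integrable, to identify which endpoint contributes it in each regime, and to pull out the precise $-\log\sqrt{|C-1|}$ rate rather than a weaker bound — this requires writing $1-w^2 = (1-w)(1+w)$ and expanding $1-w$ linearly in the integration variable near the turning point. Everything else is a routine, if slightly tedious, exercise in bounding a one-parameter family of elementary/elliptic integrals and checking monotonicity in $C$ by differentiation under the integral sign.
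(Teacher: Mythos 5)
Your overall strategy --- reduce each estimate to the explicit integral $\int \bigl(C-B\bar x_2^2/2\bigr)\bigl(1-(C-B\bar x_2^2/2)^2\bigr)^{-1/2}d\bar x_2$ over the monotone piece from Lemma~\ref{funda:piece}, scale out $B$, and bound the resulting $C$-dependent integrals --- is exactly the paper's. The problem is in the execution of part (i). You claim the two stated constants are the extremal values of the displacement, attained at $C=0$ and $C\to-1^{+}$, with monotonicity in $C$ filling in the gap. This is quantitatively false. Passing to $w=C-B\bar x_2^2/2$, the displacement is $\frac{1}{\sqrt{B}}\int_{-1}^{C}\frac{-w}{\sqrt{1-w^2}}\frac{dw}{\sqrt{2(C-w)}}$; at $C=0$ this evaluates to $\frac{1}{2\sqrt{2B}}\mathrm{B}(3/4,1/2)\approx 0.847/\sqrt{B}$ (your own integral $\frac{1}{\sqrt B}\int_0^{\sqrt2}\frac{u^2/2}{\sqrt{1-u^4/4}}\,du$ is \emph{not} $\pi/\sqrt{2B}\approx 2.221/\sqrt{B}$), and as $C\to -1^{+}$ it tends to $\pi/(2\sqrt{B})\approx 1.571/\sqrt{B}$, not to $\log\bigl(\tfrac{2+\sqrt2}{1+\sqrt3}\bigr)/\sqrt{B}\approx 0.223/\sqrt{B}$. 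So neither stated bound is attained at a limiting value of $C$, the implied direction of monotonicity is backwards (the displacement apparently \emph{decreases} as $C$ increases on $(-1,0]$), and the constants of the lemma cannot be produced by your mechanism. What the paper actually does is a pointwise estimate of the integrand: write $1-w^2=(1-w)(1+w)$, bound $1-w\ge 1-C$ and the numerator by $1$ to get the upper bound $\frac{1}{\sqrt{1-C}}\int_0^{x_2^{max}}(1+C-B Z^2/2)^{-1/2}dZ=\pi/\sqrt{2B(1-C)}\le \pi/\sqrt{2B}$, and for the lower bound restrict to the half-interval $Z\in[\sqrt{(C+1)/B},\sqrt{2(C+1)/B}]$, bound $-w$ and $(1+w)^{-1/2}$ there, integrate the remaining $(1-w)^{-1/2}$ factor exactly (a logarithm), and minimise the resulting expression over $C\in[-1,0]$, the minimum being at $C=0$. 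You would need to replace your ``attained at the endpoints'' argument by something of this kind.

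There is a second concrete error in part (iii): the assertion that ``the integrand is bounded by $1$ in absolute value (since $|dx_1/d\bar x_2|=|1/\tan\beta|$ and $|w|\le 1$)'' is wrong. One has $|dx_1/d\bar x_2|=|w|/\sqrt{1-w^2}$, which blows up as $w\to 1$, i.e.\ precisely at the endpoint $\bar x_2=x_2^{min}$ of the piece you are integrating over (where the tangent is horizontal), so ``length of interval times sup of integrand'' is not available. The bound $h_i(C)=O(1/\sqrt{C})$ is still true, but it must be obtained after the substitution to $w$: $\int_0^1\frac{w\,dw}{\sqrt{1-w^2}}=1$ is finite and the Jacobian $(2B(C-w))^{-1/2}$ is $\le (2B(C-1))^{-1/2}=O((BC)^{-1/2})$ uniformly on $w\in[0,1]$ for large $C$. (Relatedly, in part (ii) the Jacobian $dt/dw=(2B(C-w))^{-1/2}$ is \emph{not} ``smooth and bounded'' at $w=C$, and $\int^{C}(1-w^2)^{-1/2}dw=\arcsin C$ is bounded, not logarithmically divergent; the $-\log\sqrt{1-C}$ rate comes from the interaction of the two square-root factors, or equivalently from integrating $\bigl((1-C^2)+CBt^2\bigr)^{-1/2}$ in the original variable $t$ near $t=0$. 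Your conclusion there is right but the stated mechanism is not.)
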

\begin{proof}
In order to prove the estimates we use Lemma~\ref{funda:piece} and integrate
(\ref{Dx1:Dx2}) over $\bar{x}_{2}$ in the appropriate interval where
$x_{1}(\bar{x}_{2})$ is monotone. Then we estimate the integrand (that is the
right-hand side of (\ref{Dx1:Dx2})) with the minus sign by using
straightforward bounds on the numerator and on the denominator. Namely, we
estimate the parabola $C-B\frac{Z^{2}}{2}$ from above and below. We also write
$\left(1 - \left(  C - B\frac{Z^{2}}{2}\right)^{2}\right)^{\frac{1}{2}}
=\left(1 - \left(  C - B\frac{Z^{2}}{2}\right)  \right)^{\frac{1}{2}} 
\left(1 + \left(  C - B\frac{Z^{2}}{2}\right)  \right)  ^{\frac{1}{2}} $. 
Then we either 
estimate the first or the second factor using the bounds on the parabola. The
resulting integrals are elementary. We outline the proof of {\it (i)}, the 
rest is left to the reader.

\textit{(i)} Let us assume first that $-1<C\leq0$. Then
\[
0
<
x_{1}\left(  \sqrt{\frac{2(C+1)}{B}}\right)  - x_{1}(0) = 
- \int_{0}^{\sqrt{\frac{2(C+1)}{B}}} \frac{C-B\frac{Z^{2}}{2}}{\left(  1-\left(
B\frac{Z^{2}}{2} -C \right)  ^{2}\right)  ^{\frac{1}{2}}}dZ \,,
\]
therefore
\begin{align*}
x_{1}\left(  \sqrt{\frac{2(C+1)}{B}}\right)  - x_{1}(0)\leq\frac{1}{\sqrt
{1-C}} \int_{0}^{\sqrt{\frac{2(C+1)}{B} } } \frac{dZ}{\left(  1+
C-B\frac{Z^{2}}{2} \right)  ^{\frac{1}{2}}}= \frac{\pi}{\sqrt{2B(1-C)}} \,,
\end{align*}
and this shows the estimate on the right-hand side. 

On the other hand,
\begin{align*}
x_{1}\left(  \sqrt{\frac{2(C+1)}{B}}\right)  - x_{1}(0)\geq x_{1}\left(
\sqrt{\frac{2(C+1)}{B}}\right)  - x_{1}\left(  \sqrt{\frac{C+1}{B}}\right)  \,
\geq\,\\
\\
\frac{1-C}{\sqrt{2(C+1)}} \int_{\sqrt{\frac{C+1}{B} } }^{\sqrt{\frac
{2(C+1)}{B}}} \frac{dZ}{\left(  1- C+B\frac{Z^{2}}{2}\right)  ^{\frac{1}{2}}}=
\frac{1-C}{\sqrt{B(C+1)}} \log\left(  \frac{\sqrt{2(C+1) } + 2 }{\sqrt{C+1} +
\sqrt{3-C}} \right)  >0
\end{align*}
One can check that the right-hand tends to $(\sqrt{2}-1)/\sqrt{B}$ as $C\to
-1$, this implies continuity of this term for $C\in[-1,0]$. Its minimum there is 
achieved at $C=0$, computing its value gives the result.
\end{proof}

We now give the estimate on the horizontal length from points at height $x_2^{med}$ to points at height $x_2^{max}$.

\begin{lem}\label{C:estimates:2}
For any $C>0$ the following holds
\[
\frac{1}{\sqrt{B}} h_{5}(C)   \leq 
x_{1}\left(  x_{2}^{max}(C;B)\right)  -x_{1}\left(x_{2}^{med}(C;B)\right)  
\leq\frac{1}{\sqrt{B}} h_{6}(C)  
\]
where $h_{5}(C)$ and $h_{6}(C)$ decrease for $C\in[1,+\infty)$, 
$h_{5}(C)$, $h_{6}(C) \to 0$ as $C\to +\infty$ and $h_5(C)$ tends to a positive 
constant as $C\to 0$. 
\end{lem}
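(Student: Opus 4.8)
The plan is to proceed exactly as in the proof of Lemma~\ref{C:estimates}, namely by integrating the branch equation (\ref{Dx1:Dx2}) with the minus sign over the interval $\bar{x}_{2}\in[x_2^{med}(C;B),x_2^{max}(C;B)]=[\sqrt{2C/B},\sqrt{2(C+1)/B}]$, where by Lemma~\ref{funda:piece} the relevant piece of curve is a monotone graph $x_{1}(\bar{x}_{2})$ with $dx_{1}/d\bar{x}_{2}\le 0$ (recall $C-B\bar{x}_{2}^{2}/2$ is negative there). Substituting $Z=\sqrt{B}\,\bar{x}_{2}$ scales out the $B$-dependence and leaves
\[
x_{1}\!\left(x_{2}^{max}\right)-x_{1}\!\left(x_{2}^{med}\right)
=\frac{1}{\sqrt{B}}\int_{\sqrt{2C}}^{\sqrt{2(C+1)}}
\frac{\,B\frac{Z^{2}}{2}-C\,}{\sqrt{1-\left(C-B\frac{Z^{2}}{2}\right)^{2}}}\,dZ
\]
(with the obvious abuse of notation rescaling $Z$), so the whole statement reduces to bounding the dimensionless integral, which I will call $h(C)$, above and below by continuous, positive functions $h_{6}(C)\ge h(C)\ge h_{5}(C)$ with the stated monotonicity and limits. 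The natural thing is to take $h_{5}=h_{6}=h$ itself once we show $h$ is finite, positive, continuous, decreasing, and has the right asymptotics; but keeping them separate costs nothing and matches the phrasing of the earlier lemmas.

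First I would write $u=C-B\bar{x}_{2}^{2}/2$, so that as $\bar{x}_{2}$ runs from $x_2^{med}$ to $x_2^{max}$, $u$ runs from $0$ down to $-1$, and $du=-B\bar{x}_{2}\,d\bar{x}_{2}$, i.e. $d\bar{x}_{2}=-du/(B\bar{x}_{2})=-du/\sqrt{2B(C-u)}$. This turns the integral into
\[
h(C)=\int_{-1}^{0}\frac{(-u)}{\sqrt{1-u^{2}}}\cdot\frac{du}{\sqrt{2(C-u)}}
=\frac{1}{\sqrt{2}}\int_{0}^{1}\frac{v}{\sqrt{1-v^{2}}\,\sqrt{C+v}}\,dv,
\]
a clean one-variable integral with $C$ appearing only through the harmless factor $(C+v)^{-1/2}$. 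From this representation everything follows: the integrand is a positive, continuous, strictly decreasing function of $C$ for each fixed $v\in(0,1)$, and the singularity of $v/\sqrt{1-v^{2}}$ at $v=1$ is integrable, so $h(C)$ is finite, positive, continuous on $C>0$, and strictly decreasing, in particular decreasing on $[1,\infty)$. As $C\to\infty$ we bound $\sqrt{C+v}\ge\sqrt{C}$ to get $h(C)\le \tfrac{1}{\sqrt{2C}}\int_{0}^{1}v(1-v^{2})^{-1/2}dv=\tfrac{1}{\sqrt{2C}}$, so $h(C)\to 0$; as $C\to 0^{+}$, dominated (or monotone) convergence gives $h(C)\to \tfrac{1}{\sqrt{2}}\int_{0}^{1}\sqrt{v}\,(1-v^{2})^{-1/2}dv$, a finite positive constant, which is the last assertion.

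Then, to honour the statement literally, I would simply set $h_{5}(C)=h_{6}(C)=h(C)$; alternatively, to get the weak inequalities with a little room, take $h_{5}(C)=h(C)(C/(C+1))^{1/2}\cdot$const-type lower bound obtained by replacing $\sqrt{C+v}$ by $\sqrt{C+1}$, and $h_{6}(C)=h(C)$-type upper bound obtained by replacing it by $\sqrt{C}$ — but the cleanest route is the exact one. I expect the only genuinely delicate point to be a bookkeeping one rather than an analytic one: making sure the sign of the integrand and the orientation of the piece from Lemma~\ref{funda:piece} are handled consistently (so that the left-hand side is indeed a positive quantity, i.e. $x_{1}$ at the top exceeds $x_{1}$ at the mid-height along the chosen branch), since on this segment $dx_{1}/d\bar{x}_{2}\le 0$ while $\bar{x}_{2}$ is increasing; the substitution above already takes care of this by producing a manifestly positive integrand. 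No step requires more than elementary estimates once the change of variables $v=\sqrt{B/2}\,\bar{x}_{2}$ followed by $u$ (or directly $v$ as above) has been made.
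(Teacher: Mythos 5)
Your proof is correct and takes essentially the paper's intended route: the paper leaves this lemma unproved, pointing to the same scheme as Lemma~\ref{C:estimates} (integrate the minus branch of (\ref{Dx1:Dx2}) over the relevant $\bar{x}_{2}$-interval and control the resulting integral), and your exact substitution $u=C-B\bar{x}_{2}^{2}/2$, giving $h(C)=\tfrac{1}{\sqrt{2}}\int_{0}^{1}v\,(1-v^{2})^{-1/2}(C+v)^{-1/2}\,dv$ with $h_{5}=h_{6}=h$, is if anything cleaner than the two-sided bounding of the parabola that the paper's template uses. The only slip is cosmetic: on $[x_{2}^{med},x_{2}^{max}]$ the piece from Lemma~\ref{funda:piece} has $dx_{1}/d\bar{x}_{2}\geq 0$ (not $\leq 0$, since $C-B\bar{x}_{2}^{2}/2<0$ there and the branch carries a minus sign), but the integrand you actually write, $(Z^{2}/2-C)\big/\sqrt{1-(C-Z^{2}/2)^{2}}$, is the correct positive one, so nothing downstream is affected.
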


We now adapt these estimates to actual curves that solve (\ref{S1E1}) subject to a Young condition on the grains 
connected. Before that we distinguish two types of solution curves. 
\begin{defin}\label{def:lr:rl}
Let $p_{c}$ and $q_{c}\in \mathbb{R}^2$ be the first and second, following 
orientation, contact points of a 
solution curve $\gamma$. We say that $\gamma$ goes from left to write if 
$p_{c}^{(1)}\leq q_{c}^{(1)}$ and write 
$\gamma_{lr}$. Similarly, we say that $\gamma$ goes from right to left if 
$p_{c}^{(1)}\geq q_{c}^{(1)}$, and we write $\gamma_{rl}$.

We also denote by $\beta(p_{c})$ the angle that $\mathbf{t}$ forms with the horizontal 
at the contact point $p_{c}\in \mathbb{R}^{2}$. With this we define the parameter
\begin{equation}\label{dist:below}
d(v_{1},R,C,p_{c},q_{c}):=2 R + \frac{|\cos\beta(p_{c}) -\cos\beta(q_{c}) |}{(2B(C+1))^{\frac{1}{2}}}>0\,.
\end{equation}

Moreover, we shall indicate with a super-index $+$ that $\gamma$ lies above $x_2=v_1 L$ 
(since it then has positive curvature) and with a super-index $-$ that $\gamma$ lies below 
$x_2=v_1 L$ (negative curvature). When $\gamma$ crosses $x_2=v_1 L$ (its curvature changes 
sign across $x_2=v_1 L$) we indicate it with the super-index $c$. 
\end{defin}

\begin{lem}\label{single:distance}
Let $\xi$ and $\zeta$ be centers of two grains connected by a solution $\gamma$ of 
(\ref{S1E1}) with a Young condition at contact points $p_c$ and $q_c$ (following orientation) 
and that solves (\ref{A1E1}) with $C>1$, then:
\begin{enumerate}
\item If $\gamma=\gamma_{lr}$,
\begin{equation}\label{centers:above}
d(v_{1},R,C,p_{c},q_{c})<\|\xi-\zeta\|< 2 R +2  \left(\frac{5}{B(C-1)}\right)^{\frac{1}{2}}
\end{equation}
\item If $\gamma=\gamma_{rl}$,
\begin{equation}\label{centers:above2}
d(v_{1},R,C,p_{c},q_{c})<\|\xi-\zeta\|< 2 R + \left(\frac{\pi^{2} + 2}{ B(C+1) }\right)^{\frac{1}{2}}\,.
\end{equation}
\end{enumerate}
\end{lem}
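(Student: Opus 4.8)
The plan is to translate the geometric statement "$\xi$ and $\zeta$ are centers of grains of radius $R$ connected by $\gamma$" into a statement about the horizontal and vertical span of the arc $\gamma$ between its two contact points $p_c$ and $q_c$, and then to feed in the length estimates of Lemmas~\ref{C:estimates} and~\ref{C:estimates:2} together with the structure of the pieces from Lemma~\ref{funda:piece}. The basic relation is $\xi = p_c + R\,\mathbf N(p_c)$ and $\zeta = q_c + R\,\mathbf N(q_c)$ (each center sits at distance $R$ from its contact point along the outward normal to the grain), so $\|\xi-\zeta\| \leq 2R + \|p_c - q_c\|$ for the upper bound, while for the lower bound I would use that $\gamma$ must physically clear both grains, so the chord $p_c q_c$ together with the two radial segments forces $\|\xi - \zeta\|$ to be at least the distance "below" the interface encoded in $d(v_1,R,C,p_c,q_c)$. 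Concretely, the $2R$ term in~\eqref{dist:below} comes from the two grains themselves, and the second term comes from bounding the horizontal separation of the contact points from below using~\eqref{A1E1}: since $\cos\beta = C - B\bar x_2^2/2$, the quantity $|\cos\beta(p_c)-\cos\beta(q_c)|$ controls the difference in heights of the two contact points, and dividing by $(2B(C+1))^{1/2} = \sqrt B\, x_2^{max}$ converts this into the minimal horizontal run needed (using that $|dx_1/d\bar x_2|$ is comparable to, and in the relevant range bounded below by, a constant once one is away from the turning points). I expect that establishing the lower bound cleanly — in particular checking that the contact points cannot be positioned so as to make the horizontal separation smaller than~\eqref{dist:below} regardless of where on $\partial B_R(\xi)$ and $\partial B_R(\zeta)$ they sit — is the one genuinely delicate point, and it is where Lemma~\ref{beta:values} (the admissible range of $\beta$, hence of $\rho_c$ via~\eqref{beta:rho}) must be invoked to rule out degenerate placements.

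For the upper bounds I would split according to the two cases. When $\gamma = \gamma_{lr}$ (Definition~\ref{def:lr:rl}), the curve for $C>1$ consists, by Lemma~\ref{funda:piece}(i), of monotone graph-pieces running between heights $x_2^{min}$, $x_2^{med}$ and $x_2^{max}$; the horizontal extent of $\gamma$ between contact points is therefore at most the sum of a piece of type $x_1(x_2^{min}) - x_1(x_2^{med})$ (bounded by $h_4(C)/\sqrt B$ via Lemma~\ref{C:estimates}(iii)) plus a piece of type $x_1(x_2^{max}) - x_1(x_2^{med})$ (bounded by $h_6(C)/\sqrt B$ via Lemma~\ref{C:estimates:2}), and I would then bound $h_4(C) + h_6(C)$ crudely by $2(5/(C-1))^{1/2}$ using the stated asymptotics $h_i(C) = O(-\log\sqrt{C-1})$ as $C\to 1^+$ and the decay as $C\to\infty$ — here one has to be a little careful that the constant $5$ and the exponent on $(C-1)$ are chosen so the bound holds uniformly for all $C>1$, which amounts to a one-variable function estimate. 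Adding the $2R$ from the two grains gives~\eqref{centers:above}. When $\gamma = \gamma_{rl}$, the contact points are arranged so the net horizontal displacement is small, and here the sharper control is available: the horizontal run is at most a $\pi/\sqrt{2B(C-1)}$-type contribution (from the estimate analogous to Lemma~\ref{C:estimates}(i)–(ii) bounding $\int |dx_1/d\bar x_2|\,d\bar x_2$ by $\pi/\sqrt{2B}$ crudely, since $|C - B\bar x_2^2/2| \leq 1$ on the curve) plus a small turning contribution, and collecting these into the single radical $((\pi^2+2)/(B(C+1)))^{1/2}$ yields~\eqref{centers:above2}; again this is a matter of choosing the numerical constants so the algebra closes.

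The lower bound, identical in both cases, I would treat uniformly: parametrize the arc from $p_c$ to $q_c$, note $\|\xi-\zeta\| \geq \|p_c - q_c\| \geq |p_c^{(1)} - q_c^{(1)}|$ is too weak, so instead observe that the straight segment joining $\xi$ to $\zeta$ cannot pass through the interior of either disk, which forces $\|\xi - \zeta\|$ to exceed $2R$ plus the gap between the disks measured along the interface; quantify that gap from below using that between the two contact heights the curve sweeps a horizontal distance at least $\frac{1}{\max|dx_1/d\bar x_2|}$ times the height difference, combined with~\eqref{A1E1} to write the height difference in terms of $|\cos\beta(p_c) - \cos\beta(q_c)|$ and the bound $|dx_1/d\bar x_2| = |\cos\beta|/|\sin\beta|$ together with $|\sin\beta| \leq 1$ and $|\cos\beta| \leq 1$ on the curve, which collapses to the factor $(2B(C+1))^{-1/2}$ once one uses $x_2^{max} = (2(C+1)/B)^{1/2}$. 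The main obstacle, as noted, is making this lower-bound argument robust to the placement of the contact points on the grains — i.e. confirming that no admissible Young configuration (with $\beta_c$ in the range permitted by Lemma~\ref{beta:values} for $C>1$, which is all of $[-\pi,\pi]$) can violate~\eqref{dist:below}; once that is pinned down, the remaining computations are the elementary integral estimates already flagged as routine in the proof of Lemma~\ref{C:estimates}.
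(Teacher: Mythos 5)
Your overall strategy coincides with the paper's: write $\|\xi-\zeta\|$ as $2R$ plus the separation of the contact points, bound the horizontal and vertical components of $p_{c}-q_{c}$ via Lemmas~\ref{C:estimates} and~\ref{C:estimates:2} according to the orientation ($\gamma_{lr}$ versus $\gamma_{rl}$), and extract the lower bound from the first integral (\ref{A1E1}). Your upper-bound plan is essentially the paper's argument: the paper uses $|p_{c}^{(1)}-q_{c}^{(1)}|\leq 2|x_{1}(x_{2}^{med})-x_{1}(x_{2}^{min})|$ for $\gamma_{lr}$ and $|p_{c}^{(1)}-q_{c}^{(1)}|\leq 2|x_{1}(x_{2}^{max})-x_{1}(x_{2}^{med})|$ for $\gamma_{rl}$, together with $|p_{c}^{(2)}-q_{c}^{(2)}|\leq x_{2}^{max}-x_{2}^{min}$ and the observation that $-\log|1-C|\ll|1-C|^{-1/2}$ as $C\to1$; your bookkeeping of which pieces to add differs slightly (a sum $h_{4}+h_{6}$ rather than $2h_{4}$, and a $(C-1)$ where the paper has $(C+1)$ in the $\gamma_{rl}$ case), but this is of the same nature and only affects the unimportant numerical constants.

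The one step that would fail as described is your mechanism for the lower bound. You propose to convert the height difference of the contact points into a \emph{horizontal} run by dividing by $\max|dx_{1}/d\bar{x}_{2}|$, i.e.\ you need a positive lower bound on $|dx_{1}/d\bar{x}_{2}|=|\cos\beta|/|\sin\beta|$ along the arc. No such bound exists: $dx_{1}/d\bar{x}_{2}$ vanishes at $\bar{x}_{2}=\pm x_{2}^{med}(C;B)$, so the horizontal displacement between two heights cannot be bounded below by the vertical one. The correct (and much simpler) route, which is the one the paper takes, involves no horizontal distances at all: one uses $\|p_{c}-q_{c}\|\geq|p_{c}^{(2)}-q_{c}^{(2)}|$ and then (\ref{A1E1}) in the form $\cos\beta(q_{c})-\cos\beta(p_{c})=\tfrac{B}{2}\bigl((\bar{p}_{c}^{\,(2)})^{2}-(\bar{q}_{c}^{\,(2)})^{2}\bigr)$, combined with $|\bar{p}_{c}^{\,(2)}+\bar{q}_{c}^{\,(2)}|\leq 2x_{2}^{max}(C;B)$, to get $|p_{c}^{(2)}-q_{c}^{(2)}|\geq|\cos\beta(p_{c})-\cos\beta(q_{c})|/(2B(C+1))^{1/2}$, which is exactly the second term of (\ref{dist:below}). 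Your instinct that the placement of the contact points on the two circles is the delicate issue is legitimate, but note that the paper does not resolve it either: it simply writes $\|\xi-\zeta\|=2R+\|p_{c}-q_{c}\|$ (an identity that strictly holds only when $p_{c}$ and $q_{c}$ lie on the segment joining the centers) and makes no appeal to Lemma~\ref{beta:values}.
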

\begin{proof}
We write $D:=\|\xi-\zeta\|$ (euclidean norm), then clearly 
$D=2R+\|p_{c}-q_{c}\|$ and we can write:
\[
\|p_{c}-q_{c}\|^{2}= |q_{c}^{(1)}-q_{c}^{(1)} |^{2} + |p_{c}^{(2)}-q_{c}^{(2)}|^{2}
\]
and 
\[
\|p_{c}-q_{c}\|\geq |p_{c}^{(2)}-q_{c}^{(2)} |\,.
\]
Using (\ref{A1E1}) then
\[
B\frac{(p^{(2)}_{c})^{2}}{2} -\frac{(p^{(2)}_{c})^{2}}{2}= \cos\beta(q_{c}) - \cos\beta(p_{c})
\]
which gives the right-hand side of (\ref{centers:above}) and (\ref{centers:above2}). 

On the other hand if $\gamma=\gamma_{lr}$ then 
\[
0 \leq |p_{c}^{(1)}-q_{c}^{(1)} | \leq 2 |x_{1}( x_{2}^{med}(C;B) ) -x_{1}( x_{2}^{min}(C;B)  )|
\]
and \[
|p_{c}^{(2)}-q_{c}^{(2)}|\leq x_{2}^{max}(C;B) - x_{2}^{min}(C;B)=\frac{\sqrt{2}}{\sqrt{B}}\frac{2}{\sqrt{C+1}+\sqrt{C-1}}
\]
If $\gamma=\gamma_{rl}$ then
\[
0 \leq |p_{c}^{(1)}-q_{c}^{(1)} | \leq 2 |x_{1}( x_{2}^{max}(C;B) ) -x_{1}( x_{2}^{med}(C;B)  )|
\]
and
\[
|p_{c}^{(2)}-q_{c}^{(2)}|\leq \frac{\sqrt{2}}{\sqrt{B}}\frac{2}{\sqrt{C+1}+\sqrt{C-1}}
\]
The rest of the proof follows from lemmas~\ref{C:estimates} and \ref{C:estimates:2} and the fact that 
$-\log|1-C|\ll1/\sqrt{|1-C|}$ as $C\to 1$. 
\end{proof}

The following lemmas are proved in a similar way than Lemma~\ref{single:distance}. 
\begin{lem}\label{single:distance2}
Let $\xi$ and $\zeta$ be centers of two grains connected by a solution $\gamma$ of (\ref{S1E1})
 with a Young condition at contact points $p_c$ and $q_c$ (following orientation) and that solves (\ref{A1E1}) $0<C<1$:
\begin{enumerate}
\item If $\gamma=\gamma_{lr}$, then
\begin{equation}\label{lr:Cg0}
d(v_{1},R,C,p_{c},q_{c})
< \|\xi-\zeta\| <  2 R +\frac{6}{(B(1-C))^{\frac{1}{2}}}\,.
\end{equation}

\item If $\gamma=\gamma_{rl}$, then 
\begin{equation}\label{rl:Cg0}
d(v_{1},R,C,p_{c},q_{c})
<\|\xi-\zeta\|< 2 R +\left( \frac{  \pi^{2} + 2 }{B(C+1)}  \right)^{\frac{1}{2}}\,.
\end{equation}
\end{enumerate}
\end{lem}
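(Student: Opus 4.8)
The plan is to mirror closely the proof of Lemma~\ref{single:distance}, replacing only the ingredients that depend on the regime $C>1$ by the corresponding ones for $0<C<1$. As before, write $D:=\|\xi-\zeta\|$, so that $D=2R+\|p_c-q_c\|$, and split
\[
\|p_c-q_c\|^2=|p_c^{(1)}-q_c^{(1)}|^2+|p_c^{(2)}-q_c^{(2)}|^2,\qquad \|p_c-q_c\|\ge |p_c^{(2)}-q_c^{(2)}|.
\]
The lower bound $d(v_1,R,C,p_c,q_c)<\|\xi-\zeta\|$ follows \emph{verbatim} from the argument in Lemma~\ref{single:distance}: using (\ref{A1E1}) one has $|\cos\beta(q_c)-\cos\beta(p_c)|=\tfrac{B}{2}\,|(p_c^{(2)})^2-(q_c^{(2)})^2|\le \tfrac{B}{2}\,(x_2^{max}(C;B))^2\,|p_c^{(2)}-q_c^{(2)}|/x_2^{max}$, hence $|p_c^{(2)}-q_c^{(2)}|\ge |\cos\beta(p_c)-\cos\beta(q_c)|/(2B(C+1))^{1/2}$, and then $\|\xi-\zeta\|\ge 2R+|p_c^{(2)}-q_c^{(2)}|>d(v_1,R,C,p_c,q_c)$. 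Note that this part does not use $C>1$ at all, only $C>-1$, so it applies directly.

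For the upper bounds I would use Lemma~\ref{funda:piece} to locate the contact points on the pieces. When $0<C<1$ the relevant piece runs from height $x_2^{max}(C;B)$ down through $x_2^{med}(C;B)$ (where the tangent is vertical) to $\bar x_2=0$. For $\gamma=\gamma_{lr}$ the curve turns around (in the $x_1$ direction) only at the vertical-tangent point $x_2^{med}$, so the horizontal excursion between the two contact points is at most twice the horizontal length of the sub-piece between $x_2^{med}$ and $x_2^{max}$, giving
\[
0\le |p_c^{(1)}-q_c^{(1)}|\le 2\,|x_1(x_2^{max}(C;B))-x_1(x_2^{med}(C;B))|,
\]
while the vertical separation is bounded by the full height of the piece,
\[
|p_c^{(2)}-q_c^{(2)}|\le x_2^{max}(C;B)=\left(\tfrac{2(C+1)}{B}\right)^{1/2}.
\]
Feeding Lemma~\ref{C:estimates:2} (the bound $h_6(C)/\sqrt B$ on the horizontal length between $x_2^{med}$ and $x_2^{max}$, with $h_6(C)$ bounded on $(0,1)$ and $h_6(C)=O(-\log\sqrt{1-C})$ as $C\to1^-$) into $\|p_c-q_c\|\le |p_c^{(1)}-q_c^{(1)}|+|p_c^{(2)}-q_c^{(2)}|$ and using $-\log|1-C|\ll 1/\sqrt{|1-C|}$ as $C\to1$ (to absorb the logarithm and the numerical constants into the stated $6/(B(1-C))^{1/2}$), one obtains (\ref{lr:Cg0}). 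For $\gamma=\gamma_{rl}$ the contact points lie on a sub-piece on which $x_1$ is monotone between $x_2^{med}$ and $x_2^{max}$ (no turning point is crossed in the relevant range), so here $|p_c^{(1)}-q_c^{(1)}|\le 2\,|x_1(x_2^{max})-x_1(x_2^{med})|\le \pi/\sqrt{2B}\cdot$(something), and combined with the same vertical bound this yields the cleaner estimate (\ref{rl:Cg0}); the constant $\pi^2+2$ comes exactly as in Lemma~\ref{single:distance}, from $|p_c^{(1)}-q_c^{(1)}|^2\le \pi^2/(2B)$ and $|p_c^{(2)}-q_c^{(2)}|^2\le 2/(B(C+1))$ after bounding $C+1\le$ const — one must be slightly careful with which factor of $(C+1)$ is kept, but this is routine.

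The only genuine subtlety — the place where I expect to spend the most care — is the \emph{case analysis of where the contact points sit on the pieces} for $\gamma_{rl}$ versus $\gamma_{lr}$, and correspondingly which monotone sub-piece of $x_1(\bar x_2)$ the segment $[p_c,q_c]$ is confined to. One has to check, using the orientation conventions of Definition~\ref{def:lr:rl} together with the description in Lemma~\ref{funda:piece} and the gluing rule (Lemma~\ref{glue:rule}), that a right-to-left curve genuinely stays between $x_2^{med}$ and $x_2^{max}$ in the relevant stretch (so that $x_1$ is monotone and the horizontal span is controlled by $h_6$ rather than also picking up the $x_2^{med}$-to-$0$ part), while a left-to-right curve may dip down to $\bar x_2=0$ and turn. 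Once that geometric bookkeeping is pinned down, the estimates are immediate substitutions from Lemmas~\ref{C:estimates} and~\ref{C:estimates:2}, exactly as in the proof of Lemma~\ref{single:distance}, which is why we only record the statement and refer to that proof.
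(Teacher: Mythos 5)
Your overall strategy is the right one and is what the paper intends (the paper itself gives no details, saying only that the lemma ``is proved in a similar way'' as Lemma~\ref{single:distance}): the lower bound via (\ref{A1E1}) is verbatim the same and, as you correctly note, uses only $C>-1$; the upper bounds come from splitting $\|p_c-q_c\|$ into horizontal and vertical parts and controlling the horizontal part by twice the length of the appropriate monotone sub-piece via Lemmas~\ref{C:estimates} and~\ref{C:estimates:2}.

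However, the case analysis that you yourself flag as the ``only genuine subtlety'' is carried out incorrectly, and this is a real gap. For $0<C<1$ the piece of Lemma~\ref{funda:piece}~\emph{(iii)} has two monotone sub-pieces: the ``cap'' from $x_2^{med}$ to $x_2^{max}$, controlled by Lemma~\ref{C:estimates:2}, whose $h_6$ stays bounded as $C\to1^-$, and the near-axis part from $0$ to $x_2^{med}$, controlled by Lemma~\ref{C:estimates}~\emph{(ii)}, whose $h_2(C)=O(-\log\sqrt{1-C})$ diverges as $C\to1^-$. Mirroring the $C>1$ proof --- where $\gamma_{lr}$ was bounded by $2|x_1(x_2^{med})-x_1(x_2^{min})|$ (the trough) and $\gamma_{rl}$ by $2|x_1(x_2^{max})-x_1(x_2^{med})|$ (the cap) --- here the left-to-right arcs are the ones that run through the crossing of $\bar{x}_2=0$ (the analogue of the trough, since $x_2^{min}=0$ for $C\leq1$), so $|p_c^{(1)}-q_c^{(1)}|\leq 2|x_1(0)-x_1(x_2^{med})|\leq 2h_2(C)/\sqrt{B}$; this is what produces the factor $(1-C)^{-1/2}$ in (\ref{lr:Cg0}) after using $-\log|1-C|\ll|1-C|^{-1/2}$. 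The right-to-left arcs are the ones confined to a cap at $\pm x_2^{max}$, so they are controlled by $2h_6(C)/\sqrt{B}$, which is bounded near $C=1$ and gives the $C$-uniform bound (\ref{rl:Cg0}). You have swapped these roles: you bound $\gamma_{lr}$ by $2|x_1(x_2^{max})-x_1(x_2^{med})|$ and then quote Lemma~\ref{C:estimates:2} as giving ``$h_6(C)=O(-\log\sqrt{1-C})$ as $C\to1^-$'', but that logarithmic divergence belongs to $h_2$ in Lemma~\ref{C:estimates}~\emph{(ii)}, not to $h_6$. As stated, your geometric claim for $\gamma_{lr}$ is false: for $C$ close to $1$ a left-to-right elemental component running along the long, nearly horizontal stretch near $\bar{x}_2=0$ has $|p_c^{(1)}-q_c^{(1)}|$ of order $|\log(1-C)|/\sqrt{B}$, which is much larger than $2|x_1(x_2^{max})-x_1(x_2^{med})|=O(1/\sqrt{B})$. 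You land on the correct final bounds only because the misquoted growth of $h_6$ compensates for the wrong choice of sub-piece; the argument needs to be rewired with the correct assignment before it is a proof.
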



We recall that for $-1<C\leq 0$ the complete curve does not intersect, so the estimates in 
Lemma~\ref{complete:curves} have to be summed up depending on the number of pieces of the curve 
(see Remark~\ref{upside:down}{\it(ii)}):
\begin{lem}\label{single:distance3}
Let $\xi$ and $\zeta$ be centers of two grains connected by a solution $\gamma$ of (\ref{S1E1}) with a Young 
condition and that solves (\ref{A1E1}) with
$-1<C\leq 0$, then it is necessarily of the form $\gamma_{rl}$ and satisfies
\begin{equation}\label{lr:Cl0}
d(v_{1},R,C,p_{c},q_{c})
<
\|\xi-\zeta\|< 2 R + n\, \left( \frac{2(\pi^{2} + 4)}{B}\right)^{\frac{1}{2}}\,,
\end{equation}
where $n$ is the number of elemental components of $\gamma_{rl}$. 
\end{lem}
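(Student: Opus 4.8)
\textbf{Plan of proof for Lemma~\ref{single:distance3}.}

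The plan is to mimic the structure of the proof of Lemma~\ref{single:distance}, adapting it to the geometry of the complete curve with $-1<C\le 0$. First I would set $D:=\|\xi-\zeta\|$ and use, exactly as before, that $D=2R+\|p_c-q_c\|$ since the curve $\gamma$ meets each grain along its outer normal at the contact points. The lower bound $\|\xi-\zeta\|>d(v_1,R,C,p_c,q_c)$ is immediate from $\|p_c-q_c\|\ge |p_c^{(2)}-q_c^{(2)}|$ together with the first integral (\ref{A1E1}), $B(p_c^{(2)})^2/2-B(q_c^{(2)})^2/2=\cos\beta(q_c)-\cos\beta(p_c)$, which reproduces the definition (\ref{dist:below}) of $d$; this part is identical to the argument in Lemma~\ref{single:distance} and needs no change.

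For the upper bound, the key new point — and the reason this lemma is stated separately — is the observation in Remark~\ref{upside:down}{\it(ii)}: for $-1<C\le 0$ the complete curve oscillates around $\bar x_2=0$ without self-intersections, so the curve connecting the two grains may consist of $n$ elemental components glued at points where $\bar x_2'(s)=0$, i.e. at heights $\pm x_2^{max}(C;B)$, with $\bar x_2=0$ crossings in between. I would then bound $\|p_c-q_c\|$ by the total arc of the curve projected onto the two axes: the vertical displacement between any two consecutive turning points is at most $2x_2^{max}(C;B)=2\sqrt{2(C+1)/B}\le 2\sqrt{2/B}$, and by Lemma~\ref{funda:piece}{\it(iv)} and Lemma~\ref{C:estimates}{\it(i)} the horizontal extent of each half-piece (from a $\bar x_2=0$ point to a turning point) is at most $\pi/\sqrt{2B}$. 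Summing the contributions of the (at most) $n$ elemental components, each of which contributes a bounded multiple of $1/\sqrt{B}$ in each coordinate, and using $\|p_c-q_c\|^2=|p_c^{(1)}-q_c^{(1)}|^2+|p_c^{(2)}-q_c^{(2)}|^2\le(\text{horizontal total})^2+(\text{vertical total})^2$, gives $\|p_c-q_c\|\le n\sqrt{2(\pi^2+4)/B}$ after collecting constants; that $\gamma$ must be of type $\gamma_{rl}$ follows from Lemma~\ref{beta:values}{\it(iii)} (for $C\le 0$ one has $\beta\in[\eta,\pi]\cup[-\pi,-\eta]$ with $\eta\ge\pi/2$, so a $\gamma_{lr}$ with the prescribed orientation is impossible — the curve always points "backwards" in $x_1$ on average), which should be stated explicitly before the estimate.

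The main obstacle I expect is bookkeeping the constant $2(\pi^2+4)$ correctly: one must be careful that "number of elemental components" $n$ counts the pieces in the sense of Lemma~\ref{funda:piece} (between consecutive $\bar x_2=0$ crossings the curve of Figure~\ref{Cng} actually comprises the arc from $0$ up to $+x_2^{max}$ and back, or down to $-x_2^{max}$ and back), so each such component contributes horizontal length $\le 2\cdot(\pi/\sqrt{2B})=\pi\sqrt{2/B}$ and vertical length $\le 2x_2^{max}\le 2\sqrt{2/B}$; squaring and adding per component and using the triangle inequality $\|p_c-q_c\|\le\sum_{\text{pieces}}(\text{diameter of piece})\le n\sqrt{(\pi\sqrt{2/B})^2+(2\sqrt{2/B})^2}=n\sqrt{2(\pi^2+4)/B}$ yields (\ref{lr:Cl0}). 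The rest of the estimates then follow, as in Lemma~\ref{single:distance}, from Lemma~\ref{C:estimates}{\it(i)} together with the monotonicity bounds there; as in the earlier proofs these integrals are elementary and I would leave them to the reader.
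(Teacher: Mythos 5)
Your proposal is correct and takes essentially the same route as the paper, which gives no separate proof but only states that this lemma ``is proved in a similar way'' to Lemma~\ref{single:distance}, with the estimates summed over the $n$ pieces as indicated in Remark~\ref{upside:down}\emph{(ii)} --- precisely what you carry out, and your per-piece bookkeeping (horizontal extent $\pi\sqrt{2/B}$, vertical extent $2\sqrt{2/B}$, combined via the triangle inequality over the pieces) recovers the constant $\left(2(\pi^{2}+4)/B\right)^{1/2}$ exactly. Your explicit justification that $\gamma$ must be of type $\gamma_{rl}$ (from (\ref{A1E1}), $\cos\beta=C-B\bar{x}_{2}^{2}/2\leq 0$ when $C\leq 0$, so $x_{1}$ is non-increasing along the curve) is a detail the paper leaves implicit.
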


\section{The Probabilistic Analysis}\label{sec:stoch} 
In this section we shall give the main results and prove some of them. 
Before we do this, we complete the problem setting by describing its probabilistic 
features in detail.

We observe first that the probability of having at least $m$ centers in 
$V\subset[0,L]^{2}$, $P(V)$, is computed by
\begin{equation}\label{m:centers}
P(V)=\left(  \frac{1}{L^{2}}\int_{V}d^{2}\xi\right)^{m}\,.
\end{equation}

We shall denote by $\Omega_{\nu}(L)$ the set of all possible configurations of
$\nu L^{2}$ centers in $[0,L]^{2}$ that are distributed homogeneously and 
independently. In particular, there are $\nu L^{2}$ centers in $[0,L]^{2}$, 
and we can make the identification $\Omega_\nu(L)=\left(  [0,L]^{2}\right)^{\nu L^2}$. 
We assume that the grains have the same size, where their radius $R<1/2$. 
Notice that we allow situations of overlapping grains. Then, we assign the 
following probability measure to $\Omega_\nu(L)$:
\begin{equation}
\mu_{\nu}(d\xi) =\frac{1}{L^{2\nu L^{2}}}\prod_{k=1}^{\nu L^{2}}  d\xi_{k}
\,,\quad\mbox{with}\quad d\xi=\prod_{k=1}^{\nu L^{2}}d\xi_{k} \,.\label{Prob1}%
\end{equation}
The probability space is $(\Omega_{\nu}(L) , A_{\nu},\mu_{\nu})$, where $A_{\nu}$
is the $\sigma$-algebra of Borel sets of $\Omega_{\nu}(L)$. 

For simplicity of notation, we introduce the parameter
\begin{equation}
\theta=L\,\max\left\{  \frac{1}{\sqrt{B}},R\right\}  \label{theta}%
\end{equation}
that relates the three (non-dimensionalised) lengths. As mentioned in the introduction, 
$\theta$ is relevant in the cases where $B\gg 1$. In the other cases we use to simplify notation; 
it will indicate that the solutions depend on $B$, $R$ and $L$. 

For every $\omega\in\Omega_\nu(L)$ we denote by $\Gamma(\omega,\lambda,\theta)$ the interface 
solution of (\ref{S1E1}) and the Young condition. We recall that such an interface connects 
the domain $x_{1}=0$ to the domain
$x_{1}=L$ and that $\Gamma(\omega,\lambda,\theta)$ is the union of
non-intersecting elemental components that are solutions of (\ref{S1E1}) 
and the Young condition between a pair of grains.

In particular, each component of $\Gamma(\omega,\lambda,\theta)$ solves (\ref{A1E1}) 
with a particular value of $C$, and
we denote by $C(\Gamma)$ the maximum $C$ for each such $\Gamma$.

We index the elemental components of any $\Gamma(\omega,\lambda,\theta)$ following its orientation and 
let $J=\{1,\dots,m+1\}$ be this set of indexes if the interface connects $m$ grains. We shall write that a center 
$\xi_i\in \Gamma(\omega,\lambda,\theta)$ with $i=1,\dots,m$ if it is connected (by the Young conditions) 
to the elemental components $\gamma_{i-1}$ and $\gamma_{i}$ of $\Gamma(\omega,\lambda,\theta)$. 
We also denote by ${\cal D}_{i}=\|\xi_{i}-\xi_{i+1}\|$.  

We shall need a number of results. The first one is a consequence of the lemmas 
\ref{single:distance}, \ref{single:distance2} and \ref{single:distance3}, 
and will be used in sections \ref{regime:1} and \ref{regime:2}:

\begin{cor}
\label{corol1} For every $L$, $\omega\in\Omega_\nu(L)$ and $v_{0}$, there exists a
constant $D_{0}$, such that if a $\Gamma(\omega,\lambda,\theta)$ satisfies that
\[
\Gamma(\omega,\lambda,\theta)\cap\left\{  |\,x_{2}-v_{1}L|\,\geq\frac{D}{\sqrt{B}}\right\}
\neq\varnothing\text{ for }D\geq D_{0},
\]
then there exists a constant $K>0$ such that every pair of centers of grains
joined by $\Gamma$, $\xi_{i}$ and $\xi_{l}$, and contained in $\left\{|\,x_{2}-v_{1}L|\,\geq\frac{D}{\sqrt{B}}\right\}$, 
satisfy $\left\|  \xi_{i}-\xi_{l}\right\|  \leq\frac{K}{D\sqrt{B}}+2R$. In fact, $D_0\propto \sqrt{C(\Gamma)}$.   
\end{cor}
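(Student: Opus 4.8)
The plan is to read off the conclusion directly from the single-component distance estimates in Lemmas~\ref{single:distance}, \ref{single:distance2} and \ref{single:distance3}, exploiting the structure of equation~(\ref{S1E1}): the curvature of an elemental component grows with the distance to the reference height $x_2 = v_1 L$, so a component that is forced to stay far above (or below) that line must be short in the horizontal direction. First I would fix $\omega$, $L$, $v_0$ (hence $v_1$), and suppose $\Gamma(\omega,\lambda,\theta)$ meets the strip complement $\{|x_2 - v_1 L| \ge D/\sqrt{B}\}$ for some large $D$. Since $\Gamma$ is a union of elemental components solving (\ref{A1E1}) with various values of $C$, and $C(\Gamma)$ is the maximum of these, every component that enters the region $\{|x_2 - v_1 L|\ge D/\sqrt B\}$ has its parameter $C$ bounded below: from (\ref{A1E1}) a point at height $\bar x_2$ with $B\bar x_2^2/2 \ge D^2/2$ forces $C = B\bar x_2^2/2 + \cos\beta \ge D^2/2 - 1$, so in particular such a component has $C > 1$ once $D \ge D_0$ for a suitable absolute $D_0$ (and one sees $D_0 \propto \sqrt{C(\Gamma)}$ is the natural threshold, reading the relation the other way).

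Next, for any two centers $\xi_i, \xi_l$ joined by $\Gamma$ and both lying in $\{|x_2 - v_1 L| \ge D/\sqrt B\}$, I would look at the elemental component(s) connecting consecutive such centers. Each connects a pair of grains whose contact points lie at height at least $D/\sqrt B$ in absolute value, so by the computation above each such component solves (\ref{A1E1}) with $C > 1$, and Lemma~\ref{single:distance} applies: in the $\gamma_{lr}$ case,
\[
\|\xi - \zeta\| < 2R + 2\left(\frac{5}{B(C-1)}\right)^{1/2},
\]
and in the $\gamma_{rl}$ case the bound $2R + ((\pi^2+2)/(B(C+1)))^{1/2}$ is even smaller. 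Since $C \ge D^2/2 - 1$ we get $C - 1 \ge D^2/2 - 2 \ge D^2/4$ for $D$ large, so $\|\xi-\zeta\| - 2R \le 2\sqrt{5}\,(B D^2/4)^{-1/2} = (4\sqrt 5)/(D\sqrt B)$, i.e. a bound of the form $K/(D\sqrt B) + 2R$ for a single step. For two centers $\xi_i,\xi_l$ that are not consecutive, one either argues that the portion of $\Gamma$ between them stays in the region (being trapped there, since to leave it a component would have to reach height $D/\sqrt B$, which it cannot do while connecting two grains at larger height with such small curvature radius) and sums the per-step estimates with a geometric/telescoping argument, or — more cleanly — one notes that $\|\xi_i - \xi_l\|$ is controlled by the total horizontal excursion of $\Gamma$ inside $\{|x_2 - v_1 L| \ge D/\sqrt B\}$, which Lemmas~\ref{C:estimates}--\ref{C:estimates:2} bound (the $h_j(C)$ are $O(1/\sqrt C) = O(1/D)$ as $C\to\infty$), again giving $K/(D\sqrt B) + 2R$.

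The main obstacle is the bookkeeping in the non-consecutive case: one must be sure that once $\Gamma$ enters the far region between $\xi_i$ and $\xi_l$ it cannot wander back into the strip and out again, which would break the uniform-per-step argument. This is where the monotonicity of curvature with height is essential — a component with $C>1$ has horizontal extent $O(1/\sqrt{BC})$ and cannot bridge from height $\ge D/\sqrt B$ down across the strip and back without its $C$ dropping, contradicting $C \ge D^2/2-1$; making this precise (and identifying that the relevant $C$ to use is $C(\Gamma)$, giving $D_0 \propto \sqrt{C(\Gamma)}$) is the only delicate point, and it follows from Lemma~\ref{funda:piece} together with the explicit height bound $|\bar x_2| \le \sqrt{2(C+1)/B}$ noted after (\ref{A1E2}). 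The constant $K$ is then absolute, coming only from the numerical constants $\sqrt 5$, $\pi^2+2$, $\pi^2+4$ appearing in Lemmas~\ref{single:distance}--\ref{single:distance3}.
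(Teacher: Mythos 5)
Your core argument is correct and is exactly what the paper intends: the paper gives no written proof of Corollary~\ref{corol1}, stating only that it is ``a consequence of'' Lemmas~\ref{single:distance}--\ref{single:distance3}, and your chain --- a contact point at height $|\bar x_2|\ge D/\sqrt{B}$ forces $C=B\bar x_2^2/2+\cos\beta\ge D^2/2-1>1$ via (\ref{A1E1}), after which the upper bounds $2R+2(5/(B(C-1)))^{1/2}$ and $2R+((\pi^2+2)/(B(C+1)))^{1/2}$ of Lemma~\ref{single:distance} give $\|\xi-\zeta\|\le 2R+K/(D\sqrt{B})$ --- is the intended filling-in of that gap, including the observation that $D\lesssim\sqrt{2(C(\Gamma)+1)}$ is what makes $D_0\propto\sqrt{C(\Gamma)}$ the natural threshold.

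The one place where you go astray is the ``non-consecutive'' branch. The phrase ``pair of centers of grains joined by $\Gamma$'' should be read as a pair joined by an elemental component of $\Gamma$, i.e.\ consecutive grains: this is how the hypothesis of Lemmas~\ref{single:distance}--\ref{single:distance3} is phrased (``centers of two grains connected by a solution $\gamma$''), and it is how the corollary is invoked in the proofs of Theorems~\ref{theo:case1} and~\ref{theo:case2} (``every two consecutive grains\dots joined by $\Gamma$''). Under the reading you entertain --- arbitrary pairs of grains both lying in the far region --- the stated bound is simply false: a chain of $m$ grains all at height $\ge D/\sqrt{B}$, each consecutive pair at distance $\approx 2R+K/(D\sqrt{B})$, has endpoints separated by $m$ times that quantity, and no telescoping or geometric summation can recover a bound independent of $m$. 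So the ``delicate point'' you identify is not a gap to be closed but a misreading to be discarded; once the statement is restricted to consecutive pairs, your per-component estimate is the whole proof. (A last small remark: the case $-1<C\le 0$ of Lemma~\ref{single:distance3}, whose bound grows with the number $n$ of elemental components, is automatically excluded here precisely because $C>1$ once $D\ge D_0$, which is worth saying explicitly since it is the only one of the three cited lemmas whose conclusion would not be of the required form.)
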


In many of the probability results we will use Stirling estimates (see e.g. \cite{FellerI}). Namely,
\begin{equation}\label{fact:stirling}
n! = \sqrt{2\pi n} \left(  \frac{n}{e}\right)  ^{n} e^{r_{n}} \quad\mbox{with}
\quad\frac{1}{12(n+1)}<r_{n}<\frac{1}{12n}\,.
\end{equation}
Thus we can write for any $n$, $m\in\mathbb{N}$ with $1<m<n$:
\begin{equation}\label{bino:stirling}
\binom{n}{m}=\frac{e^{r_{n}-r_{m}-r_{n-m}}}{\sqrt{2\pi}}\left(  
\frac{n}{(n-m)m}\right)  ^{\frac{1}{2}} (n)^{n}(m)^{-m}(n-m)^{-n+m}\,. 
\end{equation}
We have the following lemma:
\begin{lem}\label{comb:stirling}
Given $n_0$, $n$, $m\in \mathbb{N}$, with $n_0\leq m\leq n-1$ then
\[
\binom{n}{m}\leq C(n) e^{W(n,m)} \quad \mbox{with} \quad x = \frac{m}{n}\,, \quad
W(n,m) = n \Psi\left(\frac{m}{n}\right)-\frac{1}{2}\varphi\left(\frac{m}{n}\right)-\frac{1}{2}\log(n)
\] 
with
\[
\Psi(x)= -x\log(x) -(1-x)\log(1-x)  \,,\quad \varphi(x) = \log(x)+\log(1-x)\,, 
\]
and $C(n)\leq e^{-c/n}/\sqrt{2\pi}$ for some $c>0$. Moreover, at $m=n/2$, $W$ reaches its maximum 
$W(n,n/2)= -(n+1)\log(1/2)-(1/2)\log(n)>0$, attaining its minimum at either $m=n-1$ or $m=n_0$ 
(depending on the value of $n_0$ and $n$) for $n$ large enough. 
\end{lem}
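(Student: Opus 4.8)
The plan is to establish the two claimed identities about $\binom{n}{m}$ by first extracting the combinatorial content of the Stirling expansion (\ref{bino:stirling}) and then analysing the two elementary functions $\Psi$ and $\varphi$ on the interval $[0,1]$. Starting from (\ref{bino:stirling}), I would take logarithms: writing $x=m/n$, one has $(n)^{n}(m)^{-m}(n-m)^{-n+m} = \exp\left(n\Psi(x)\right)$ directly from the definition of $\Psi$, and $\left(\frac{n}{(n-m)m}\right)^{1/2} = n^{-1/2}\exp\left(-\tfrac12\varphi(x)\right)$ again straight from the definition of $\varphi$. The prefactor $e^{r_n-r_m-r_{n-m}}/\sqrt{2\pi}$ is then absorbed into $C(n)$; using the two-sided bounds $\frac{1}{12(k+1)}<r_k<\frac{1}{12k}$ from (\ref{fact:stirling}) one checks that $r_n-r_m-r_{n-m}<0$ whenever $m,n-m\geq 1$ and $n$ is large (the positive term $r_n=O(1/n)$ is dominated by the negative contributions $-r_m,-r_{n-m}$), so that $C(n)=e^{r_n-r_m-r_{n-m}}/\sqrt{2\pi}\leq e^{-c/n}/\sqrt{2\pi}$ for a suitable $c>0$; strictly the bound on $r_n-r_m-r_{n-m}$ depends on $m$, so I would either note $C(n)$ may be taken as the supremum over the relevant range $n_0\leq m\leq n-1$, or state the estimate with the understanding that $m$ is bounded away from $0$ and $n$. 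This gives $\binom{n}{m}\leq C(n)e^{W(n,m)}$ with $W(n,m)=n\Psi(x)-\tfrac12\varphi(x)-\tfrac12\log n$, as claimed.

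For the location of the extrema, I would treat $W$ as a function of $m$ through $x=m/n$ and differentiate. The function $\Psi$ is strictly concave on $(0,1)$ with $\Psi'(x)=\log\frac{1-x}{x}$, so $\Psi$ is maximised at $x=1/2$; the correction term $-\tfrac12\varphi(x)=-\tfrac12\log(x(1-x))$ is symmetric about $x=1/2$ and convex, with a minimum at $x=1/2$, so in principle it competes with the maximum of $n\Psi$. However, since $\Psi''(x)=-\tfrac1{x(1-x)}$ while $\varphi''(x)=-\tfrac1{x^2}-\tfrac1{(1-x)^2}$, the dominant $n\Psi$ term controls the shape for $n$ large, and $x=1/2$ is the maximiser; evaluating, $\Psi(1/2)=\log 2$ and $\varphi(1/2)=-2\log 2=2\log(1/2)$, giving $W(n,n/2)=n\log 2 - \tfrac12\cdot 2\log(1/2) - \tfrac12\log n = -(n+1)\log(1/2)-\tfrac12\log n$, which is positive for $n\geq 2$. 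For the minimum over the discrete range $n_0\leq m\leq n-1$, since $W$ (as a function of the continuous variable $x$) is unimodal with its peak at $x=1/2$, its minimum on any interval is attained at one of the endpoints, here $m=n_0$ or $m=n-1$; which one depends on whether $n_0$ or $n-1$ is farther (in the relevant weighted sense) from $n/2$, hence the stated dichotomy.

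The main obstacle is the bookkeeping around $C(n)$: the quantity $r_n-r_m-r_{n-m}$ genuinely depends on $m$, and making the clean statement "$C(n)\leq e^{-c/n}/\sqrt{2\pi}$" requires either restricting $m$ to a range bounded away from the endpoints $0,n$ (which is exactly what the hypothesis $n_0\leq m\leq n-1$ with $n_0$ fixed and $n$ large provides, for the lower end) or else a slightly more careful argument at $m=n-1$ where $r_{n-m}=r_1$ is an $O(1)$ constant rather than small. I would handle the $m=n-1$ (and symmetric) boundary case separately, noting that there $\binom{n}{n-1}=n$ and the asserted bound $C(n)e^{W(n,n-1)}$ can be checked directly to hold for $n$ large. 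Everything else — the concavity of $\Psi$, the symmetry and convexity of $\varphi$, and the endpoint-minimisation — is elementary one-variable calculus and I would not belabour it.
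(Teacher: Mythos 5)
Your derivation of the bound $\binom{n}{m}\leq C(n)e^{W(n,m)}$ is essentially the paper's: take logarithms in (\ref{bino:stirling}), recognise $n\Psi(m/n)$ and $-\tfrac12\varphi(m/n)-\tfrac12\log n$ in the two factors, and absorb $e^{r_n-r_m-r_{n-m}}/\sqrt{2\pi}$ into $C(n)$. Your extra care about the $m$-dependence of $r_n-r_m-r_{n-m}$ (taking a supremum over the admissible range, treating $m=n-1$ separately) is legitimate and if anything more scrupulous than the paper, which bounds this prefactor rather casually. The computation of $W(n,n/2)$ is also correct.

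The gap is in the localisation of the minimum. You assert that $W$, as a function of $x=m/n$, is \emph{unimodal on $(0,1)$} with peak at $x=1/2$, because ``the dominant $n\Psi$ term controls the shape for $n$ large.'' This is false: since $-\tfrac12\varphi(x)=-\tfrac12\log\bigl(x(1-x)\bigr)\to+\infty$ as $x\to0^+$ and as $x\to1^-$, while $n\Psi(x)\to0$ there, $W\to+\infty$ at both ends of $(0,1)$. Hence $W$ has \emph{two interior local minima}, one on each side of $1/2$, and the dominance of $n\Psi$ fails precisely near the endpoints --- which is exactly where the endpoints $m=n_0$ and $m=n-1$ of the discrete range live. ``Unimodal, hence endpoint minimum on any subinterval'' therefore does not go through as stated. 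What is actually needed (and what the paper does) is to locate the inflexion points $x_{l,r}(n)=\tfrac12\bigl(1\mp\sqrt{(n-1)/(n+1)}\bigr)$ of $W$, observe that the two local minima lie in $(0,x_l]$ and $[x_r,1)$ respectively, and then check that $x_l(n)<n_0/n$ and $x_r(n)>(n-1)/n$. Only after this does one know that $W$ is increasing on $[n_0/n,1/2]$ and decreasing on $[1/2,(n-1)/n]$, so that its minimum over $n_0\leq m\leq n-1$ sits at $m=n_0$ or $m=n-1$. Your argument can be repaired by inserting this inflexion-point (or an equivalent sign-of-$W'$) analysis, but as written the key monotonicity claim rests on a false premise.
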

\begin{proof}
The estimates follow from (\ref{bino:stirling}). First, let $K(n,m)=\exp(r_{n}-r_{m}-r_{n-m})\leq C(n)$ with 
\[
C(n)=\exp(\frac{ 1 }{ 12n }-\frac{ 1 }{ 12m+1 }-\frac{ 1 }{ 12(n-m)+1 }) \leq \exp( \frac{1}{12n} - \frac{1}{12n+1} -1 ) 
< \exp(-\frac{11}{12n})
\] 
where we use the estimates on $r_n$ (\ref{fact:stirling}). The estimate on the other factor of (\ref{bino:stirling}) 
follows by a calculus exercise: It is easy to check that $W(x)=n  \Psi(x)-\frac{1}{2}\varphi(x)$ has a maximum at $x=1/2$ 
if $n>3$ with $W(1/2)>0$. Also since $\lim_{x\to 0}W(x)=$ and $\lim_{x=1}=$ there must be at least two relative minima, 
one in $(0,1/2)$ and one in $(1/2,1)$. It is easy then to check that $W''(x)>0$ in $(0,1/2(1-\sqrt{(n-1)/(n+1)})$ and 
$W''(x)<0$ in $(1/2(1-\sqrt{(n-1)/(n+1)},1)$, 
where $x_l(n)=1/2(1-\sqrt{(n-1)/(n+1)}$ and $x_l(n)1/2(1+\sqrt{(n-1)/(n+1)}$ 
are the inflexion points. This means that there are two minima one is attained in $(0,x_l(n))$ 
and the other in $(x_r(n),1)$. 
Since $x_r(n)>(n-1)/n$ for all $n\geq 1$ and $x_l(n)<n_0/n$ with $1 \geq n_0\leq n$, the statement follows.
\end{proof}


\subsection{The regime $1\lesssim L\ll \min\{\sqrt{B},R^{-1}\}$}\label{regime:1}
The subcases of this regime can be treated similarly. 
Namely, the following theorem holds.

\begin{thm}
\label{theo:case1} For all $\varepsilon>0$ there exists $\theta_{\varepsilon}$
such that for all $\theta\leq\theta_{\varepsilon}$ there exists $\Omega_{\varepsilon}\subset\Omega_{\nu}(L)$ 
with $\mu_{\nu}(\Omega_{\varepsilon})\geq 1-\varepsilon$ and such that for any $\omega\in\Omega_{\varepsilon}$, the
solutions $\Gamma(\omega,\lambda,\theta)$ satisfy 
$\Gamma(\omega,\lambda,\theta)\equiv\left\{  x_{2}-v_{1}L=h(\omega)\right\}  $ with \[
\left|h(\omega)\right|  \leq \max\left\{  \frac{2D_{0}}{\sqrt{B}},2R\right\}  +2Kv_{0} L R^{2}\,,
\] 
where the positive constant $K$ is as in Corollary~\ref{corol1}.
\end{thm}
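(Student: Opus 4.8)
The plan is to show that in this regime the only interface that can actually connect $x_1=0$ to $x_1=L$ is (up to a vertical translation $h(\omega)$) the horizontal one, and then to bound $|h(\omega)|$ using a volume argument. The first observation is that the parameter $\theta = L\max\{1/\sqrt B, R\}$ is small, so both boundary-layer height $1/\sqrt B$ and the grain radius $R$ are much smaller than $1/L$, and in particular much smaller than $1$ — recall the average inter-grain distance is $1$. Now suppose, for contradiction, that some elemental component of $\Gamma(\omega,\lambda,\theta)$ reaches a height $|x_2 - v_1 L| \geq D/\sqrt B$ for some fixed $D\geq D_0$. By Corollary~\ref{corol1}, any two centers joined by $\Gamma$ lying in the strip $\{|x_2-v_1L|\geq D/\sqrt B\}$ would have to be within distance $K/(D\sqrt B) + 2R$ of each other. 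Since $1/\sqrt B \ll 1$ and $R\ll 1$, this distance is $\ll 1$, while the \emph{nearest}-neighbour distance is, with overwhelming probability, bounded below by a fixed fraction of $1$ on a set $\Omega_\varepsilon$ of configurations of measure $\geq 1-\varepsilon$ (this is the only genuinely probabilistic input: using (\ref{m:centers}), the probability that some pair of the $\nu L^2$ centers is within distance $r$ is at most $\binom{\nu L^2}{2}(\pi r^2/L^2) \to 0$ as $r\to 0$ uniformly for $L$ in the allowed range, since $L\ll\min\{\sqrt B, R^{-1}\}$ keeps $\nu L^2 r^2 \to 0$). Hence on $\Omega_\varepsilon$ no elemental component can leave the strip $\{|x_2-v_1L| < \max\{2D_0/\sqrt B, 2R\}\}$: the curve would need to connect two grains through the ``excursion'' region, but those grains are too far apart. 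Therefore every elemental component stays within $\max\{2D_0/\sqrt B, 2R\}$ of the reference line, and by the classification of elemental components (Lemmas~\ref{funda:piece}–\ref{beta:values}) the only way to chain such low-amplitude components all the way from $x_1=0$ to $x_1=L$ — respecting the Young condition with contact angle $\alpha$ and non-self-intersection — is the near-horizontal family, i.e. $\Gamma(\omega,\lambda,\theta) \equiv \{x_2 - v_1 L = h(\omega)\}$ for a single constant $h(\omega)$ with $|h(\omega)|\leq \max\{2D_0/\sqrt B, 2R\}$ modulo the volume correction below.

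The second step is to pin down $h(\omega)$ via conservation of volume. The height $v_1 L$ was defined so that the horizontal line at $x_2 = v_1 L$ encloses exactly the liquid volume, where $v_1 L^2 \approx v_0 L^2 + n\pi R^2$ with $n$ the number of grains below $\Gamma$, as in (\ref{v0:vs:v1}). But $v_1$ itself was defined using \emph{this very same} $\Gamma$, so the bookkeeping must be done carefully: the actual interface is the line $x_2 = v_1 L + h(\omega)$, and imposing that it encloses the prescribed liquid volume $v_0 L^2$ (plus the grains genuinely submerged by it) forces $h(\omega) L^2$ to equal, up to sign, the discrepancy in submerged-grain area between the line $x_2=v_1L+h(\omega)$ and the line $x_2 = v_1 L$. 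That discrepancy involves at most the grains in a horizontal strip of width $|h(\omega)|$ around $x_2 = v_1 L$; there are $O(\nu L |h(\omega)|)$ such grains, each of area $\pi R^2$, so $|h(\omega)| L^2 \lesssim \nu L |h(\omega)| R^2 + (\text{lower-order})$. When $\nu R^2 / L \ll 1$ this is a contraction and yields $|h(\omega)| \lesssim$ the basic strip width; tracking the constants (and using $v_0$, $L$, $R$ as in the statement) produces the claimed bound $|h(\omega)| \leq \max\{2D_0/\sqrt B, 2R\} + 2K v_0 L R^2$, with $K$ the constant from Corollary~\ref{corol1} reused for the grain-counting estimate.

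The main obstacle I expect is the rigorous exclusion of \emph{non-horizontal} configurations that nevertheless stay in the thin strip — in principle one could imagine a solution built from many short $\gamma_{rl}^{c}$-type elemental components that wiggle across $x_2 = v_1 L$ without ever leaving the $\max\{2D_0/\sqrt B,2R\}$-neighbourhood, yet form a genuinely different curve. Ruling this out requires: (a) noting that for a component to cross the reference line it must have $|C|$ bounded and hence (Lemma~\ref{beta:values}) its angle $\beta$ is bounded away from $0$, forcing a horizontal span of order $1/\sqrt B$ or less between its two grains while the vertical drop is also $O(1/\sqrt B)$ — so again the joined grains are within $O(1/\sqrt B) + 2R \ll 1$; and (b) since consecutive grains along $\Gamma$ are almost always at distance $\gtrsim 1$ on $\Omega_\varepsilon$, consecutive components joining them cannot be of this short crossing type, so the only admissible components are the large-$C$ (or $C\to 1$) near-horizontal graphs, whose amplitude is then controlled by the inter-grain distance via Lemma~\ref{single:distance}. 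Chaining the bound over the $\le \nu L^2$ grains and collapsing it against the volume constraint is then the routine part. A secondary, purely technical nuisance is making the probabilistic lower bound on nearest-neighbour distance uniform over \emph{all} of the allowed range $1\lesssim L \ll \min\{\sqrt B, R^{-1}\}$, including the sub-cases $L=O(1)$ and $L\to\infty$ with either $R\ll 1/\sqrt B$ or $R\gg 1/\sqrt B$; this is where one uses that $\theta\to 0$ forces $\nu L^2 \cdot (\text{excursion-distance})^2 \to 0$.
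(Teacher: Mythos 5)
Your overall architecture (confine $\Gamma$ to a thin strip via Corollary~\ref{corol1} plus a smallness-of-$\theta$ probability estimate, then identify the horizontal solution and account for the grain volume) matches the paper's, and your treatment of the escape-from-the-strip event is essentially the paper's own: an excursion to height $\gtrsim D_0/\sqrt B$ forces consecutive joined grains to lie within $M(\theta)=K/(D_0\sqrt B)+2R$ of one another, and since $L\,M(\theta)\lesssim\theta\to 0$ the probability of even one such pair (let alone the chain of $\gtrsim L/M(\theta)$ grains the paper counts) tends to zero. The volume correction $2Kv_0LR^2$ is also in the same spirit as the paper, which simply absorbs it into the strip width $\delta(\theta)$.

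The gap is in the step you yourself flag as the main obstacle: showing that an interface \emph{confined} to the strip is the horizontal line. You propose to rule out wiggly in-strip configurations by asserting that, on a set of measure $\geq 1-\varepsilon$, the nearest-neighbour distance between centers is bounded below by a fixed fraction of $1$. That claim is false in the sub-case $L\to\infty$: for $\nu L^2$ independent uniform points in $[0,L]^2$ the expected number of pairs within distance $r$ is of order $\nu^2L^2r^2$, so the minimum pairwise distance is of order $1/L$, not of order $1$; your own displayed bound $\binom{\nu L^2}{2}\pi r^2/L^2\to 0$ holds only for $r\ll 1/L$, which suffices for the escape estimate (where $r=M(\theta)$) but not for a uniform $O(1)$ separation, so the subsequent argument that consecutive components cannot be of the short crossing type does not go through as written. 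The paper's proof avoids classifying in-strip components altogether: it shows that with probability $\left(1-2\delta/L\right)^{\nu L^2}\to 1$ (valid because $L\delta\lesssim\theta\to 0$) there are \emph{no grain centers at all} in the strip $|x_2-v_1L|<\delta$, so an interface that stays in the strip touches no grain, consists of a single elemental component joining the two walls, and is therefore the horizontal line. Replacing your nearest-neighbour claim by this empty-strip event closes the gap and renders points (a)--(b) of your ``main obstacle'' paragraph unnecessary.
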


\begin{proof}
For simplicity of notation we introduce the parameter
\[
\delta(\theta)=\max\left\{  \frac{2D_{0}}{\sqrt{B}},2R\right\}  +2Kv_{1}LR^{2}\,,
\]
that satisfies $\delta(\theta)\to0$ as $\theta\to 0$.

We first compute the probability of having a configuration $\omega$ without 
centers in the strip $|x_{2}-v_{0}L|<\delta$. We call the set of such
configurations $\Omega_{0}$, then, clearly,%
\[
\mu_{\nu}(\Omega_{0})=\left(  1-\frac{2\delta}{L}\right)^{\nu L^2}\to1\quad\mbox{as}
\quad\theta\to0\,.
\]
On the other hand if there exist $\omega\in\Omega_{0}$ and a $\Gamma
(\omega,\lambda,\theta)$ such that all its points satisfy $\left|  x_{2}%
-v_{0}L\right|  \geq\delta$ then, by Corollary~\ref{corol1}, every two
consecutive grains, $\xi_{i}$ and $\xi_{l}$, joined by $\Gamma$, satisfy
$\left\|  \xi_{i}-\xi_{l}\right\|  \leq M(\theta)$ where $M(\theta)=\frac
{K}{D_{0}\sqrt{B}}+2R$ (observe that $M(\theta)\to0$ as $\theta\to0$). The
number of grains contained in $\Gamma$ is at least $N_{0}=\frac{L}{M(\theta)}$. 
This implies that the probability of such an event tends to $0$ as $\theta\to 0$.
\end{proof}

\subsection{The regime $L=O(\min\{\sqrt{B},R^{-1}\})$ and $L\to\infty$}\label{regime:2}
We first observe that if we denote by $U_{0}$ the set of
configurations for which the 'horizontal´ solutions exist (no grains, therefore,
are connected), then $\mu_{\nu}(U_{0})=1-\mu_{\nu}(\omega_{0})$ where
$\omega_{0}$ is the set of configurations with at least one grain in the strip
around the horizontal reference height of width $2R$, thus $\mu_{\nu}(\omega
_{0})=2R/L$ and $\mu_{\nu}(U_{0})=1-2R/L$. In the current limit this means
that $\mu_{\nu}(U_{0})\to 1$ as $L\to\infty$.

If we know let $U_{1}$ be the set of configurations that have at least a
solution $\Gamma$ with exactly one grain connected to the walls, 
then $\mu_{\nu}(U_{1})\leq\mu_{\nu}(\omega_{1})$ where $\omega_{1}$ is the set of 
configurations that have at
least one center at a distance smaller than or equal than $2/\sqrt{B} + R$ 
to the horizontal reference line (recall
that the supremum of the distance to the horizontal that can be reached is
precisely $2/\sqrt{B}$ for the solution with $C=1$ different from the
horizontal line). Then $\mu_{\nu}(\omega_{1})=2(2/\sqrt{B} + R)/L\to0$ as
$L\to\infty$ and thus also $\mu_{\nu}(U_{1})\to0$ as $L\to\infty$.

In the next theorem we obtain that in general $\mu_{\nu}(U_{k})\to 0$ as $L\to \infty$ 
for all $k>0$ and that an upper bound for such behaviour is given by a Poisson distribution. 

First let us make precise the definition of the sets of configurations $U_{k}$:
\begin{defin}
Let $U_{k}\subset\Omega_{\nu}(L)$ be the set of configurations $\omega$ for which there is at 
least one solution $\Gamma(\omega,\lambda,\theta)$ such that 
$k=\#\left\{\xi_{j}\in\omega:\ \Gamma(\omega,\lambda,\theta)\cap B_{R}(\xi_{j})\neq\emptyset \right\}$.
\end{defin}

\begin{thm}\label{theo:case2} 
If $k\geq 1$, there exists $N_0\leq2D_{0}\theta\nu <\nu L^{2}$ and $L$ sufficiently large 
such that 
\[
 \mu_{\nu}(U_{k})\lesssim   e^{-2D_{0}\theta \nu} \sum_{i=k}^{N_0}\frac{1}{i!} (2D_{0}\theta \nu)^{i} \,.
\]
Moreover, for all $\varepsilon>0$, there exists a $L_{\varepsilon}>0$ such that for all $L\geq L_{\varepsilon}$ 
there exists $\Omega_{\varepsilon}\subset\Omega_\nu(L)$ such that $\mu_{\nu}(\Omega_{\varepsilon})\geq 1-\varepsilon$, 
and that for all $\omega\in\Omega_{\varepsilon}$ the solutions $\Gamma(\omega,\lambda,\theta)$ satisfy 
$|C(\Gamma)-1|\leq\varepsilon$.
\end{thm}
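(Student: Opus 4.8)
\textbf{Proof proposal for Theorem~\ref{theo:case2}.}

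The plan is to prove the two assertions separately, both by reducing the geometric information contained in Section~\ref{sec:construction1} to statements about how many centers fall into a thin horizontal strip. For the Poisson bound on $\mu_\nu(U_k)$, the key observation is the one already exploited for $k=1$: if a solution $\Gamma(\omega,\lambda,\theta)$ touches $k$ grains, then by Lemma~\ref{beta:values} and the distance estimates of Lemmas~\ref{single:distance}--\ref{single:distance3} those $k$ grains must all lie within a horizontal strip $S_\delta=\{|x_2-v_1 L|\le\delta\}$ whose half-width $\delta$ is of order $\max\{1/\sqrt B,R\}$ — more precisely $\delta=D_0/\sqrt B$ up to the radius $R$, where $D_0\propto\sqrt{C(\Gamma)}$ by Corollary~\ref{corol1}; since any connected solution has $C(\Gamma)$ bounded (a long vertical excursion forces $C>1$ large, which by Lemma~\ref{single:distance} shortens the elemental components and cannot bridge the macroscopic gap), $\delta$ stays $O(\theta/L)$. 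Hence $U_k\subset\{\omega:\ \#(\omega\cap S_\delta)\ge k\}$. First I would compute, using (\ref{m:centers})--(\ref{Prob1}), that the number of the $\nu L^2$ independent uniform centers landing in $S_\delta$ is Binomial$(\nu L^2,\,2\delta/L)$ with mean $2\delta\nu L\sim 2D_0\theta\nu$; then the probability of at least $k$ of them is $\sum_{i=k}^{\nu L^2}\binom{\nu L^2}{i}(2\delta/L)^i(1-2\delta/L)^{\nu L^2-i}$, and the classical Poisson approximation of the Binomial (bounding $\binom{\nu L^2}{i}(2\delta/L)^i\le (2D_0\theta\nu)^i/i!$ via $\binom{n}{i}\le n^i/i!$ and $(1-2\delta/L)^{\nu L^2-i}\le e^{-2D_0\theta\nu+o(1)}$, with the Stirling tools of Lemma~\ref{comb:stirling} to control error terms and truncate the sum at $N_0$) yields exactly the stated $\mu_\nu(U_k)\lesssim e^{-2D_0\theta\nu}\sum_{i=k}^{N_0}(2D_0\theta\nu)^i/i!$. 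The truncation index $N_0\le 2D_0\theta\nu$ comes from the fact that the tail of the Poisson series beyond its mean is negligible and, geometrically, that more than that many grains cannot simultaneously be joined by one admissible $\Gamma$ inside $S_\delta$ given the lower bounds on $\mathcal D_i$ in Lemmas~\ref{single:distance}--\ref{single:distance3}.

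For the second assertion, I would take $\Omega_\varepsilon$ to be the complement of $\bigcup_{k\ge1}U_k$ intersected with a set on which every solution stays in the strip $S_\delta$ with $\delta$ chosen so small that $|C(\Gamma)-1|\le\varepsilon$ is forced. Concretely: on $U_0$ the only solution is the purely horizontal one ($C=1$ exactly), so it suffices to bound $\mu_\nu(\bigcup_{k\ge1}U_k)=\mu_\nu(\{\#(\omega\cap S_\delta)\ge1\})=1-(1-2\delta/L)^{\nu L^2}$; but this does \emph{not} go to $0$ in the regime $L=O(\min\{\sqrt B,R^{-1}\})$, since $2\delta\nu L=O(\theta\nu)=O(1)$. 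So instead I would not demand $k=0$ but rather control $C(\Gamma)$ directly: by Remark~\ref{C1:complete}(ii) and Lemma~\ref{C:estimates}, an elemental component with $C$ bounded away from $1$ has horizontal length $\gtrsim h_i(C)/\sqrt B$, and the total horizontal span $L$ is covered by $m+1$ such components; so if \emph{some} component had $|C-1|>\varepsilon$ its two endpoint grains would be at mutual distance $\ge c(\varepsilon)/\sqrt B$ with no other center in the intervening lens-shaped region, and simultaneously (being a deviation from horizontal) its grains lie in $S_\delta$ — I would show this forces either an empty sub-rectangle of area $\gtrsim c(\varepsilon)\delta$, an event of probability $\le e^{-c'(\varepsilon)\theta\nu}$ summed over $O(L\sqrt B)$ positions, which is $<\varepsilon$ for $L$ large, or else the grains are so placed that the Young condition of Lemma~\ref{beta:values}/Remark~\ref{beta:range}(ii) cannot be met, again a small-probability configuration. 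Taking $L_\varepsilon$ large enough that this union bound is below $\varepsilon$ gives $\mu_\nu(\Omega_\varepsilon)\ge1-\varepsilon$, and on $\Omega_\varepsilon$ every admissible $\Gamma$ has all its $C$-values within $\varepsilon$ of $1$, hence in particular $|C(\Gamma)-1|\le\varepsilon$.

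The main obstacle I anticipate is the second part, specifically ruling out solutions with $C(\Gamma)$ bounded away from $1$ in a regime where the strip $S_\delta$ is \emph{not} asymptotically empty (its expected occupancy is $O(1)$, not $o(1)$). The Poisson bound alone cannot close this, so one genuinely needs the geometric dichotomy: a non-horizontal elemental component either (a) is short and steep, in which case the lower distance bounds $d(v_1,R,C,p_c,q_c)$ together with $C$ large produce a measurable empty region whose probability of being center-free is exponentially small in $\theta\nu$, or (b) has $C$ moderate but then its horizontal extent is $\Theta(1/\sqrt B)$, and stitching $\Theta(L\sqrt B)$ of them while keeping every gap center-free is again exponentially unlikely. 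Making the constants $c(\varepsilon),c'(\varepsilon)$ explicit from $h_1,\dots,h_6$ of Lemmas~\ref{C:estimates}--\ref{C:estimates:2}, and checking that the number of candidate positions for a ``bad'' component is only polynomial in $L$ (so the union bound survives), is the delicate bookkeeping; the Stirling estimate Lemma~\ref{comb:stirling} is the tool that turns these empty-region probabilities into the $e^{-c\theta\nu}$ form and lets the union bound beat $\varepsilon$ once $L\ge L_\varepsilon$.
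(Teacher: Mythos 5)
There are two genuine gaps. First, your reduction of the Poisson bound rests on the \emph{deterministic} containment $U_k\subset\{\omega:\ \#(\omega\cap S_\delta)\ge k\}$, justified by the claim that $C(\Gamma)$ is bounded because ``a long vertical excursion\dots cannot bridge the macroscopic gap.'' This is false as a deterministic statement: a single elemental component with large $C$ is short, but a \emph{chain} of many such components through closely spaced grains can carry the interface arbitrarily far from the reference line. The paper does not exclude this geometrically; it isolates the event $\omega_s$ that some connected grain lies outside the strip of width $2D_0R$, observes that this forces \emph{three} mutually close centers (each far grain has two neighbours within $O(R)$ by Corollary~\ref{corol1}), and bounds $\mu_\nu(\omega_s)=O(L^{-2})$. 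Likewise, your geometric justification for truncating at $N_0$ (``more than that many grains cannot simultaneously be joined'') does not follow from the lower bounds on ${\cal D}_i$; the paper instead proves probabilistically, via Lemma~\ref{comb:stirling}, that $\mu_\nu(U_{P\ge N_0})\to0$. With these two probabilistic substitutions your Binomial-to-Poisson computation coincides with the paper's and the first assertion goes through.

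Second, your argument for $|C(\Gamma)-1|\le\varepsilon$ fails quantitatively. A ``bad'' component with $|C-1|>\varepsilon$ has horizontal extent $O(1/\sqrt{B})=O(1/L)$, so the empty region you propose has area $O(c(\varepsilon)\delta)=O(1/B)=O(1/L^2)$; the probability that such a region contains no center is $(1-O(L^{-2}))^{\nu L^2}\to e^{-O(\nu)}$, bounded away from $0$ and $1$ — it is not exponentially small in $\theta\nu$, and a union bound over polynomially many positions cannot beat $\varepsilon$. Similarly, pairs of grains at mutual distance $O(R)$ occur with probability of order one (the expected number of such pairs is $O(\nu^2 L^4\cdot R^2/L^2\cdot L^{-2})=O(1)$), so events built from a single close pair cannot be discarded. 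The paper's route is different and much shorter: on the high-probability event $\omega_s^c$ every connected grain lies within $D_0R$ of the reference line, at least one elemental component must then span a horizontal distance $\gg 1/\sqrt{B}$, and by Lemma~\ref{C:estimates} together with Remark~\ref{C1:complete}(ii) this forces $C(\Gamma)\to 1$. You would need to adopt that confinement-plus-span argument (or find a genuinely larger empty region, of area comparable to $\theta$) to close the second assertion.
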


\begin{proof}
First, we observe that the probability of having at least $k$ connections is the sum of 
the probabilities of having exactly $i$ connections with $i=k\dots \nu L^2$. 
Then we distinguish between the configurations that have connections 
outside the strip around the horizontal reference line of width $2D_0R$ 
and those that do not. We first show that the probability of having 
connections outside the given strip tends to $0$. On the other hand,
 the probability of having an interface connecting exactly $k$ grains that lies
 inside a strip of width $2D_0R$ is less than that of having $k$ grains in the strip of that same width. 
Then we compute this probability. We recall the notation $N=\nu L^2$ that we use throughout this proof.

First, we treat the case $R\gg1/\sqrt{B}$. Let $\omega_s\subset\Omega_\nu(L)$ be the set of configurations 
that have a solution $\Gamma(\omega,\lambda,\theta)$, with $\omega\in\omega_s$, that intersects grains 
$\xi$ with $|\xi^{(2)}-v_{1}L| >D_{0}R$. In that case, there are at least three centers $\xi_{i}$, $\xi_{j}$, 
$\xi_{l}\in\omega\subset \omega_s$ with $\| \xi_{i}-\xi_{j}\|<(2+\tilde{K})R$ and, by Corollary~\ref{corol1}, 
$\| \xi_{j}-\xi_{l}\| <(2+\tilde{K})R$ 
for some $\tilde{K}$ of order $1$. The probability of such events is:%
\begin{align*}
&\mu_{\nu}(\omega_s)\leq\binom{N}{3}\frac{(L^{2})^{N-2}}{(L^{2})^{N}}
\left(  \int_{\|\xi_{1}-\xi_{2}\| <(2+\tilde{K})R}d\xi_{2}\right)  
\left(  \int_{\| \xi_{2}-\xi_{3}\| <(2+\tilde{K})R}d\xi_{3}\right)
\\
&  = \frac{\nu}{6} (N - 1)(N - 2)\pi^{2}(2+\tilde{K})^{4}R^{4}
  \sim \bar{K} \frac{1}{L^{2}}
\quad\mbox{as}\quad L\to\infty\,.
\end{align*}
Let $\omega_s^c$ be the complementary event of $\omega_s$. Then, $ \mu_\nu(\omega_s^c)\to 1$ as $L\to\infty$, 
and this together with Remark~\ref{C1:complete} (ii) shows the second statement of the Theorem.

We let $\omega_{k}$ the set of configurations that have exactly $k$ grains in a strip of width $2D_{0}R$. 
Then, the probability measure of $\omega_{k}$ is
\begin{align*}
& \mu_\nu(\omega_{k}) =\frac{1}{L^{2N}}
\sum\limits_{\substack{\left\{  l_{1},...,l_{k}\right\}\subset\{1,..,N\}\\l_{i}\neq l_{j}}}
\left( 
\prod\limits_{i=1}^{k}\int_{|\xi_{l_{i}}^{(2)}-v_{1}L | \leq D_{0}R}d\xi_{l_{i}}\right)  
\left( 
 \prod\limits_{\substack{m=1\\m\neq l_{i}}}^{N}\int_{ |\xi_{m}^{(2)}-v_{1}L | >D_{0}R}d\xi_{m}
\right)
\\
&  =\frac{1}{L^{2N}}\binom{N}{k}\left( \prod\limits_{i=1}^{k}
\int_{|\xi_{i}^{(2)}-v_{1}L| \leq D_{0}R}d\xi_{i}\right)  
\left(\prod\limits_{m=k+1}^{N}
\int_{| \xi_{m}^{(2)}-v_{1}L|>D_{0}R}d\xi_{m}\right) \\
&  =\frac{1}{L^{N}}\binom{N}{k}\left(  \prod\limits_{i=i}^{k}\int_{|\xi_{i}^{(2)}-v_{1}L| \leq D_{0}R}d\xi_{i}^{(2)}\right)  
\left(\prod\limits_{m=k+1}^{N}\int_{|\xi_{m}^{(2)}-v_{1}L|>D_{0}R}d\xi_{m}^{(2)}\right) \\
&  =\frac{1}{L^{N}}\binom{N}{k}(2D_{0}R)^{k}(L-2D_{0}R)^{N-k}=\binom{N}{k}
\left(  \frac{2D_{0}R}{L}\right)  ^{k}\left(  1-\frac{2D_{0}R}{L}\right)^{N-k}\\
&  =\binom{\nu L^{2}}{k}\left(  \frac{2D_{0}\theta}{L^{2}}\right)^{k}
\left(1-\frac{2D_{0}\theta}{L^{2}}\right)  ^{\nu L^{2}-k}\,.
\end{align*}
Where we have used that, in the current case, $\theta= R L$.

In a strip of width $2D_{0}R$ and the maximum number of
centers expected is proportional to $2\nu L D_{0}R =2\nu D_{0}\theta=:N_0=O(1)$. 
To see this, we let $U_{P\geq  N_0}$ be the set of configurations for which the number
 of particles in the strip of width $2D_0 R$ is larger or equal to $N_0$. Then
\[
\mu_\nu(U_{P\geq N_0})= \sum_{j=N_0}^{\nu L^2} \binom{\nu L^2}{j} 
\left(\frac{2D_0\theta}{L^2}\right)^j \left(1-\frac{2D_0\theta}{L^2}\right)^{\nu L^2-j} 
\]
Using Lemma~\ref{comb:stirling} we can estimate each term (except the last one which is 
$(2D_0\theta/L^2)^{\nu L^2}$) as follows:
\[
\binom{\nu L^2}{j} \left(\frac{2D_0\theta}{L^2}\right)^j \left(1-\frac{2D_0\theta}{L^2}\right)^{\nu L^2-j} 
\leq C(\nu L^2) \exp\left( W\left(\nu L^2,\frac{j}{\nu L^2}\right) \right)
\]
where $C(\nu L^2)$ is as in Lemma~\ref{comb:stirling}, and
\begin{align*}
W(\nu L^2, x) =& -\left(\nu L^2 +\frac{1}{2}\right)\big(x\log(x)+(1-x)\log(1-x)\big) -\frac{1}{2}\log(\nu L^2)\\
& + \nu L^2\big( x\log(2D_0\theta/L^2 ) + (1-x)\log(1-2D_0\theta/L^2)\big)\quad 
\mbox{with} \quad x\in \left[\frac{2 D_0\theta}{L^2},1\right)\,.
\end{align*}
Then, $W(\nu L^2,x)\to -\infty$ as $L\to \infty$, in particular, $W(\nu L^2,x) \sim \nu L^2 x\log(2D_0\theta/L^2 )$ 
as $L\to \infty$ if $x=O(1)$ and $W(\nu L^2,x)\sim  -\frac{1}{2}\log(\nu L^2)$ as $L\to \infty$ 
if $x\sim \frac{2D_0\theta}{L^2}$. Summing up all the terms implies that $\mu_\nu(U_{P\geq N_0})\to 0$ as $L\to \infty$ 
at an exponential rate.

Finally, we write 
\[
U_k= (U_k \cap \omega_s)\cup  (U_k \cap \omega_s^c)=(U_k \cap \omega_s)\cup  (U_k \cap \omega_s^c\cap U_{P\geq N_0} )\cup  (U_k \cap \omega_s^c\cap U_{P<N_0})
\]
where $U_{P <N_0}$ denotes 
the set of configurations for which the number of particles in the strip is less than $N_0$. Then,
\[
\mu_\nu(U_{k})\leq \mu_\nu(\omega_s)+\mu_\nu(U_{k}\cap \omega_s^c\cap U_{P\geq N_0})
+\sum_{i=k}^{N_0}\binom{\nu L^{2}}{i}
\left(\frac{2D_{0}\theta}{L^{2}}\right)^{i}\left(1-\frac{2D_{0}\theta}{L^{2}}\right)^{\nu L^{2}-i}\,.
\]
As we have seem the first two terms tend to $0$ as $L\to\infty$, and the third tends to 
$e^{-2D_{0}\theta \nu}\sum_{i=k}^{N_0}\frac{1}{i!}\left(2D_{0}\theta \nu\right)^{i}$ as $L\to\infty$.

The case $R\ll1/\sqrt{B}$ is analogous but working on a strip of width $2D_{0}/\sqrt{B}$.

\end{proof}


\subsection{The regime $L \gg \sqrt{B}$ with $B\to \infty$ and $R\to 0$} 
\label{regime:3}
In this regime we shall prove that the probability of having configurations with solutions that
separate from the horizontal reference line a distance of order one tends to zero in a regime where $L$ 
is not too large compare to some increasing function of $B$.


We prove the result in the case $1/\sqrt{B}\ll R$, the other cases can be shown similarly, as we remark 
at the end of this section.

Let us give two auxiliary lemmas, that will be needed in the proof of the main result of this section. 
The first one is a technical lemma:
\begin{lem}
\label{sqrt:growth}
Let $b>0$ and a sequence $\{a_k\}$ such that 
\[
a_k + \frac{1}{2} \frac{b}{1+a_k}< a_{k+1} < a_k +  2\frac{b}{1+a_k}\,,
\]
and $a_0 > \delta>0$. Then, there exist positive constants $C_1$ and $C_2$ independent of $k$, such that
\[
C_2 \sqrt{k} < a_k < C_1 \sqrt{k} \,.
\]
\end{lem}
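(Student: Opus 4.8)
The plan is to establish the two-sided bound by a standard comparison/telescoping argument applied to the squared sequence $b_k := a_k^2$, since the recursion is engineered so that the increment of $a_k^2$ is comparable to a constant. First I would observe that, because $a_0 > \delta > 0$ and each increment is positive, the sequence $\{a_k\}$ is strictly increasing, so in particular $a_k > \delta$ for all $k$ and $1 + a_k$ is comparable to $a_k$ once $a_k$ is large (and bounded below by $1$ always). Then, multiplying the recursion inequalities by $(a_{k+1} + a_k)$ and using $a_{k+1} - a_k = (a_{k+1}^2 - a_k^2)/(a_{k+1}+a_k)$, I would convert the hypothesis into a statement about $b_{k+1} - b_k$. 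Concretely, from $a_{k+1} - a_k < 2b/(1+a_k) \le 2b$ we get a crude upper bound $a_{k+1} + a_k < 2a_k + 2b$, and combining,
\[
b_{k+1} - b_k = (a_{k+1} - a_k)(a_{k+1} + a_k) < \frac{2b}{1+a_k}\,(2a_k + 2b) < 4b\left(1 + \frac{b}{1+a_k}\right) \le 4b(1+b)\,,
\]
which already gives $b_k < b_0 + 4b(1+b)\,k$, hence $a_k < C_1\sqrt{k}$ for a suitable $C_1 = C_1(b,\delta,a_0)$ (absorbing the $k=0$ term using $k \ge 1$, or stating the bound for $k$ large).

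For the lower bound I would argue similarly but more carefully, since here the factor $b/(1+a_k)$ must be controlled from below rather than discarded. From $a_{k+1} - a_k > \tfrac12 b/(1+a_k) > 0$ and the just-obtained upper bound $a_k < C_1\sqrt k$ (so $1 + a_k < 1 + C_1\sqrt k \le C_1'\sqrt k$ for $k \ge 1$), one gets
\[
b_{k+1} - b_k = (a_{k+1} - a_k)(a_{k+1}+a_k) > \frac{1}{2}\frac{b}{1+a_k}\,(2a_k) > \frac{b\, a_k}{1+a_k} \ge \frac{b\,\delta}{1+\delta} =: c_0 > 0\,,
\]
using that $t \mapsto t/(1+t)$ is increasing and $a_k > \delta$. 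Telescoping yields $b_k > b_0 + c_0 k > c_0 k$, i.e. $a_k > \sqrt{c_0}\,\sqrt k =: C_2\sqrt k$. (Note the upper bound on $a_k$ was not even needed here once we bound $a_k/(1+a_k)$ below by $\delta/(1+\delta)$; I include it only as a cross-check.)

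The only mild subtlety — and the one place to be careful — is the interplay between the constant "$1$" appearing in $1+a_k$ and the size of $a_k$: when $a_k$ is small the denominator is $\Theta(1)$, and when $a_k$ is large it is $\Theta(a_k)$, so one must not carelessly replace $1+a_k$ by $a_k$ uniformly. The argument above sidesteps this by always using the crude bounds $1 \le 1 + a_k$ (for the upper estimate on increments) and $\delta \le a_k$ with monotonicity of $t/(1+t)$ (for the lower estimate), which are valid for all $k$. I would also remark that the constants $C_1, C_2$ depend only on $b$, $\delta$ and $a_0$ (and not on $k$), as required, and that if one wants the bounds to hold for all $k \ge 0$ rather than $k \ge 1$ it suffices to enlarge $C_1$ and shrink $C_2$ to accommodate the finitely many initial terms. $\rule{0.5em}{0.5em}$
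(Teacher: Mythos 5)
Your argument is correct and follows essentially the same route as the paper's (very terse) proof: multiply the hypothesis by $a_{k+1}+a_k$ to convert it into two-sided bounds on $a_{k+1}^2-a_k^2$ and then telescope, noting that the bounds can only hold for $k\geq 1$. If anything, your version is cleaner than the paper's sketch, since your lower bound (via $a_k/(1+a_k)\geq\delta/(1+\delta)$) does not feed back into the upper bound, so your constants $C_1,C_2$ are explicit and non-circular, whereas the paper's stated constraints on $C_1$ and $C_2$ are mutually dependent.
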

The proof follows by induction, multiplying the inequalities by $a_k$ and $a_{k+1}$ and summing up the results, the constants 
$C_1$ and $C_2$ satisfy $C_1> 4 b ( 1+ b / C_2)$ and $C_2 >  \max_x ( b \sqrt{x} + \sqrt{b^2 x + b} ) / 2( 1 + C_1 \sqrt{x} ) $.

The next lemma is a geometric result, which we prove in Appendix~\ref{sec:proofgeolemma}.
\begin{lem}\label{geometric:lemma} 
If $\Gamma(\omega, \lambda,\theta)$ is a solution with $m$ components joining ($m-1$) 
grains, $\xi_1,\dots,\xi_{m-1}$, ordered following the orientation of $\Gamma(\omega, \lambda,\theta)$, then
\begin{equation}
\sum_{i=0}^{m}\| \xi_{i}-\xi_{i+1}\| < 4m \left(2R +\frac{K_{1}}{\sqrt{B}}\right) + K_{2} L \,, \label{geometric}%
\end{equation}
for some positive constants $K_{1}$ and $K_{2}$ independent of $L$, $B$ and $R$.
\end{lem}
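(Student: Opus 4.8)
\textbf{Proof proposal for Lemma~\ref{geometric:lemma}.}

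The plan is to bound the total length $\sum_{i=0}^{m}\|\xi_i-\xi_{i+1}\|$ by splitting the elemental components of $\Gamma(\omega,\lambda,\theta)$ according to the value of $C$ associated to each one, and then applying the single-component distance estimates from Lemmas~\ref{single:distance}, \ref{single:distance2} and \ref{single:distance3}. First I would recall that by Remark~\ref{beta:range}(ii) and the geometry at the contact points, each elemental component $\gamma_i$ has a contact angle structure that forces $\|\xi_i-\xi_{i+1}\|$ to be controlled by $2R$ plus a horizontal-length contribution that, for a component staying within a bounded neighbourhood of the reference line, is $O(1/\sqrt{B})$; these are exactly the right-hand sides in \eqref{centers:above}, \eqref{centers:above2}, \eqref{lr:Cg0}, \eqref{rl:Cg0}, \eqref{lr:Cl0}. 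The delicate components are those with $C$ large (which can reach far above the reference line) and those with $-1<C\le 0$ (which can span a large horizontal distance by chaining many periods, cf. Remark~\ref{upside:down}(ii)); the key observation is that a component of the latter type contributes a horizontal displacement proportional to its number of sub-periods $n$, but each sub-period also advances the curve horizontally, so the total such contribution across the whole interface is at most $O(L)$ — this is where the term $K_2 L$ comes from. The $C$-large components, on the other hand, have $\|\xi_i-\xi_{i+1}\|$ uniformly bounded by $2R + 2\sqrt{5/(B(C-1))}$, which is $O(2R + K_1/\sqrt{B})$ once one notes that a component reaching genuinely high above the reference line must have $C-1$ bounded below (otherwise it degenerates to the $C=1$ asymptote, which is excluded for a component joining two grains at finite separation).

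Concretely, I would proceed as follows. Step 1: classify the indices $i\in\{0,\dots,m\}$ into three sets: $I_+$ (components above the reference line, $C>1$ or $0<C<1$ with the $+$ label), $I_-$ (components with $-1<C\le 0$ oscillating around the reference line), and $I_c$ (crossing components). Step 2: for $i\in I_+\cup I_c$, apply Lemma~\ref{single:distance}/\ref{single:distance2} to get $\|\xi_i-\xi_{i+1}\|\le 2R + K_1'/\sqrt{B}$ for a universal constant $K_1'$, using that the relevant $h_j(C)$ are bounded on the admissible range of $C$ (here one uses that $C$ cannot approach $1$ faster than is compatible with the grain separation, and that $h_j(C)=O(1/\sqrt{C})$ as $C\to\infty$ kills the growth); summing over these indices gives at most $m(2R + K_1'/\sqrt{B})$. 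Step 3: for $i\in I_-$, Lemma~\ref{single:distance3} gives $\|\xi_i-\xi_{i+1}\|\le 2R + n_i\sqrt{2(\pi^2+4)/B}$ where $n_i$ is the number of sub-periods; here I would argue that each sub-period of a $C\le 0$ component produces a definite horizontal advance bounded below by a constant times $1/\sqrt{B}$ (from Lemma~\ref{C:estimates}(i), the horizontal half-period length is at least $\log((2+\sqrt2)/(1+\sqrt3))/\sqrt B$), and since $\Gamma$ does not self-intersect and is confined to $[0,L]^2$ in the horizontal direction, $\sum_{i\in I_-} n_i \cdot(\text{const}/\sqrt B) \le K_2' L$, hence $\sum_{i\in I_-}\|\xi_i-\xi_{i+1}\| \le m\cdot 2R + K_2'' L$. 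Step 4: combine, absorbing the $2R$ contributions from all three sets into the $4m(2R+K_1/\sqrt B)$ term (the factor $4$ giving room for the split into $lr$/$rl$ halves of each piece and for the crossing pieces) and the oscillatory contribution into $K_2 L$.

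The main obstacle I anticipate is Step 3 — controlling the oscillatory ($-1<C\le 0$) components. The subtlety is that a single such component can in principle wind back and forth many times, so a naive bound on $n_i$ per component is not available; one genuinely needs the global no-self-intersection hypothesis on $\Gamma$ together with a lower bound on the horizontal progress per sub-period. Making the ``horizontal progress per sub-period'' argument rigorous requires knowing that the oscillatory complete curve (Remark~\ref{upside:down}(ii)) advances monotonically in $x_1$ on average — i.e. that it is not merely oscillating in place — which follows from the explicit structure in Lemma~\ref{funda:piece}(iv) and Lemma~\ref{complete:curves}, but the bookkeeping of how the separate oscillatory components fit together disjointly inside the strip is the part that needs the detailed geometric argument deferred to Appendix~\ref{sec:proofgeolemma}. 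A secondary technical point is ruling out $C$ of an individual component approaching $1$ from either side at a rate that would make $h_j(C)\sim -\log\sqrt{|C-1|}$ blow up; this is handled because the two grains joined are at distance at least $2R$ and at most the container size, forcing $|C-1|$ to stay bounded away from $0$ by a quantity depending only on $R$ and $B$ in a way compatible with the claimed bound.
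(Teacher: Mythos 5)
Your overall strategy (classify components by the value of $C$, apply Lemmas~\ref{single:distance}--\ref{single:distance3} componentwise, and use confinement to $[0,L]^2$ plus non-self-intersection to produce the $K_2L$ term) points in a reasonable direction, but your Step~2 contains a genuine gap, and it is precisely the point to which the paper devotes the whole of Appendix~\ref{sec:proofgeolemma}. You claim that every component in $I_+\cup I_c$ satisfies $\|\xi_i-\xi_{i+1}\|\le 2R+K_1'/\sqrt B$ with a \emph{universal} $K_1'$, on the grounds that $|C-1|$ is ``bounded away from $0$ by a quantity depending only on $R$ and $B$ in a way compatible with the claimed bound.'' This is circular: the grain separation is exactly the quantity being estimated, and by Lemma~\ref{single:distance}(i) and Lemma~\ref{single:distance2}(i) a left-to-right component with $C\to1$ has horizontal extent of order $-\log\sqrt{|C-1|}/\sqrt B$, which can be as large as $L$ (this is the near-horizontal asymptote of Lemma~\ref{lemC1}; nothing prevents a single elemental component from joining two grains a horizontal distance $O(L)$ apart). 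If each of the $m$ components could be that long, the total would be $O(mL)$, not $4m(2R+K_1/\sqrt B)+K_2L$. The real content of the lemma is therefore a counting argument showing that only $O(1)$ many stretches of length $O(L)$ can occur, and your proposal does not supply one.

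The paper's proof is built around exactly this issue: it organizes $\Gamma$ into nested ``horizontal levels'' (Definition~\ref{def:level}) forming a ``zigzag'' (Definition~\ref{def:zigzag}), notes that long left-to-right stretches and short right-to-left returns must alternate level by level, and uses the fact that the curvature increases with distance from the reference line to bound the total horizontal length of each odd (left-to-right) level by twice that of the adjacent even (right-to-left) level, whose individual components \emph{are} uniformly $2R+O(1/\sqrt B)$ by (\ref{centers:above2}) and (\ref{rl:Cg0}). Only the three levels of the central band $\Gamma_0$ and the two outermost levels can have horizontal extent of order $L$, which yields the $5L$ term, while the level count $m\ge N^++N^-+2$ converts everything else into the $4m(2R+K_1/\sqrt B)$ term. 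By contrast, your Step~3 (the oscillatory $-1<C\le0$ components), which you identify as the main obstacle, is comparatively benign --- in the paper these sit inside $\Gamma_0$ and contribute part of the $O(L)$ term --- whereas the issue you dismiss as a ``secondary technical point'' is the heart of the proof.
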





Next, we define the configurations that may allow solution interfaces that separate a distance of order one from 
the reference horizontal line.
 The centers of such configurations that might be joined by a solution interface must satisfy certain conditions, 
according to Corollary~\ref{corol1}. We gather these in the following definition.

\begin{defin}
\label{bcs} We let the union of the horizontal reference line to the left boundary and to right boundary be denoted by
\[
Z_{L}^{l}   =\left\{  x_{2}=v_{1}L\right\}  \cup\left\{  x_{1}=0\right\}
\,,\quad 
Z_{L}^{r}   =\left\{  x_{2}=v_{1}L\right\}  \cup\left\{  x_{1}=L\right\}  \,.
\]

\end{defin}

\begin{defin}\label{centers:gamma} 
For given constants $K>0$ and $h>0$ of order one, and a given $\theta$ in the current regime, 
we let $U_h(K,D_{0},\theta,L)\subset\Omega_\nu(L)$ be the set of configurations of particles 
such that for every $\omega\in U(K,D_{0},\theta,L)$ there exists $N_0\leq \nu L^2$ and an injective map 
$g:\{1,\dots,N_0\}\rightarrow\{1,\dots,\nu L^2\}$ so that the centers $\{\xi_{g(n)}\}_{n=1...N_0}$ 
satisfy the following conditions:
\begin{enumerate}
\item $\max_{n}\{|\xi_{g(n)}^{(2)}-v_{1}L|\}\geq h$. 

\item 
\[
| \xi_{g(n+1)}^{(2)}-\xi_{g(n)}^{(2)}| \leq
\frac{1}{\sqrt{B}}\frac{K}{(1+\sqrt{B}|\xi_{g(n)}^{(2)}-v_{1}L|)}\,.
\]

\item If $\max\{|\xi_{g(n)}^{(2)}-v_{1}L|,|\xi_{g(n+1)}^{(2)}%
-v_{1}L|\}\geq D_{0}/\sqrt{B}$ then%
\[
\| \xi_{g(n+1)}-\xi_{g(n)}\| \leq\frac{1}{B}\frac{K}{|\xi_{g(n)}^{(2)}-v_{1}L|}\,.
\]

\item 
\begin{align*}
dist(\xi_{g(1)},Z_{L}^{l})  &  
\leq D_{0}\min\left\{  \frac{1}{\sqrt{B}},\frac{1}{B|\xi_{g(1)}^{(2)}-v_{1}L|}\right\}  \,,\\
dist(\xi_{g(N_0)},Z_{L}^{r})  &  
\leq D_{0}\min\left\{  \frac{1}{\sqrt{B}},\frac{1}{B|\xi_{g(N_0)}^{(2)}-v_{1}L|}\right\}  \,.
\end{align*}

\item Moreover, they satisfy (\ref{geometric}).
\end{enumerate}
\end{defin}

We observe that condition {\it (ii)} is suggested by the statement of Corollary~\ref{corol1} 
and the values of $x_2^{max}$, $x_2^{med}$ in (\ref{x2:mim}) and (\ref{x2:med:min}). 
Condition {\it (iii)} corresponds is also suggested by Corollary~\ref{corol1}. Finally, 
condition {\it (iv)} implies that the sequence starts either near the horizontal or near $Z_{L}^{l}$ 
and ends either near the horizontal or near $Z_{L}^{r}$.

We can now prove the following theorem:
\begin{thm}\label{theo:case3}  
Let $\Omega_{h}\subset\Omega_\nu(L)$ denote the set of all configurations $\omega$ such that there exists 
a $\Gamma(\omega,\lambda,\theta)$ satisfying $\max|\xi_{j}^{(2)}-v_{1}L|\,\geq h$ for some $h>0$ of order one.
Then, if $\sqrt{B}\ll L\ll\sqrt{B}\log B$ as $B\to\infty$, $\mu_\nu(\Omega_{h})\to 0$ as $B\to\infty$. 
\end{thm}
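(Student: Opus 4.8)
\medskip
\noindent\textbf{Proof strategy.} The plan is to dominate $\Omega_h$ by the set $U_h(K,D_0,\theta,L)$ of Definition~\ref{centers:gamma} for a suitable constant $K$, and then to bound $\mu_\nu(U_h)$ by a union bound over all admissible chains of centers, reducing everything to counting and to elementary probability estimates of the type already used in the proof of Theorem~\ref{theo:case2}. We carry the argument out with $1/\sqrt{B}$ in the role of the small length scale; the case $1/\sqrt{B}\ll R$ is obtained verbatim after replacing $1/\sqrt{B}$ by $R$, as indicated at the start of this subsection. For the reduction, given $\omega\in\Omega_h$ and a solution $\Gamma=\Gamma(\omega,\lambda,\theta)$ with $\max_j|\xi_j^{(2)}-v_1L|\ge h$, let $\xi_{g(1)},\dots,\xi_{g(N_0)}$ be the grains of $\Gamma$ from the last point at which $\Gamma$ meets $Z_L^l$ before it attains height $h$ to the first point at which it meets $Z_L^r$ afterwards (recall that $Z_L^r$ contains the whole reference line, so the chain closes as soon as $\Gamma$ returns near it). Corollary~\ref{corol1}, together with the explicit heights in (\ref{x2:mim})--(\ref{x2:med:min}), yields conditions (ii) and (iii); Lemmas~\ref{single:distance}, \ref{single:distance2} and \ref{single:distance3} supply the local distance bounds used below; Lemma~\ref{geometric:lemma} gives (v); and the choice of endpoints gives (iv). Hence $\Omega_h\subset U_h(K,D_0,\theta,L)$, and it suffices to show $\mu_\nu(U_h)\to0$.

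\emph{The far region is super-exponentially improbable.} Set $a_n=\sqrt{B}\,|\xi_{g(n)}^{(2)}-v_1L|$. Condition (ii) gives $a_{n+1}\le a_n+K/(1+a_n)$, hence $a_{n+1}^2\le a_n^2+2K+K^2$; since $a_n\ge\sqrt{B}\,h$ for some $n$ by condition (i), and using the growth rate in Lemma~\ref{sqrt:growth}, the number of indices with $a_n\ge D_0$ is at least $N_{\mathrm{far}}\ge c(h)\,B$ for some $c(h)>0$. For each such far transition, condition (iii) confines $\xi_{g(n+1)}$ to a disc of radius $K/(B|\xi_{g(n)}^{(2)}-v_1L|)\le K/(D_0\sqrt{B})$ about $\xi_{g(n)}$, so conditionally on $\xi_{g(n)}$ it has probability at most $\pi K^2/(B^2(\xi_{g(n)}^{(2)}-v_1L)^2L^2)\le\pi K^2/(BD_0^2L^2)$; multiplying by the factor $\nu L^2$ for the choice of that center, each far transition contributes at most $\pi K^2\nu/(BD_0^2)=:C_1/B$, and over $\ge c(h)B$ such transitions this produces the factor $(C_1/B)^{c(h)B}=\exp(-c(h)B\log B\,(1+o(1)))$.

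\emph{The near region and conclusion.} On the remaining transitions condition (ii) only confines the next center to a horizontal strip of height $\le 2K/\sqrt{B}$, which is too weak on its own, because a near‑reference‑line elemental component whose parameter $C$ is close to $1$ may span a horizontal distance comparable to $L$. One controls these transitions by splitting them according to whether the joining component has distance $\le 2R+O(1/\sqrt{B})$ — handled by the clustering estimate, exactly as for the set $\omega_s$ in the proof of Theorem~\ref{theo:case2} — or has $C$ close to $1$, in which case the length bound (\ref{geometric}) together with the restriction on admissible contact arcs (Remark~\ref{beta:range}) confines the admissible position of the next grain to a thin set; in either case the conditional probability of a near transition, times $\nu L^2$, stays bounded, and summing over the total length $N_0$ through the Stirling estimates of Lemma~\ref{comb:stirling} (with the extra endpoint factor $\lesssim D_0/(\sqrt{B}L)$ from (iv)) the near contribution is at most $\exp(O(B\log\log B))$. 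Since $L\ll\sqrt{B}\log B$, this is dominated by the factor $\exp(-c(h)B\log B)$ of the previous step, so $\mu_\nu(U_h)\to0$ and hence $\mu_\nu(\Omega_h)\to0$.

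\emph{Main difficulty.} The delicate point is the near region: because a near‑horizontal elemental component with $C\approx1$ can reach across a distance comparable to $L$ and there are $\sim\nu L^2$ grains available to land on, the mere existence of admissible near transitions is not in itself improbable. Only the combination of the local distance bounds of Lemmas~\ref{single:distance2}--\ref{single:distance3}, the global length constraint (\ref{geometric}), the contact‑arc restriction of Remark~\ref{beta:range}, and the combinatorial estimates of Lemma~\ref{comb:stirling} makes the near contribution controllable, and $L\ll\sqrt{B}\log B$ is precisely the threshold at which the super‑exponential gain coming from the forced $\sim B$ centers in the far region still wins.
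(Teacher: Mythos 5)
Your overall architecture matches the paper's: dominate $\Omega_h$ by $U_h(K,D_0,\theta,L)$, extract from conditions (i)--(ii) of Definition~\ref{centers:gamma} that at least $c(h)B$ centers must sit at height $\geq D_0/\sqrt{B}$, and use the disc confinement of condition (iii) so that each such transition costs a factor $C_1/B$ after multiplying by the $\nu L^2$ choices of center; this yields the decisive gain $e^{-c(h)B\log B}$. That part is a legitimate (slightly more direct) variant of the paper's treatment of $I_2$, which instead extracts a subsequence via Lemma~\ref{sqrt:growth} and confines the vertical coordinates to windows of width $2K/(C_2\sqrt{B}\sqrt{n})$, obtaining the same order through $\frac{1}{\sqrt{M_0!}}\bigl(\frac{2K}{C_2\sqrt{B}}\bigr)^{M_0-1}$ with $M_0\propto B$.

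The gap is exactly where you locate the difficulty: the near region. Two of your claims there do not hold as stated. First, ``the conditional probability of a near transition, times $\nu L^2$, stays bounded'' is false for the long components: an elemental component with $C$ close to $1$, or a $\gamma_{rl}$ with $-1<C\leq 0$ (Remark~\ref{upside:down}, Lemma~\ref{single:distance3}), can join grains at horizontal distance comparable to $L$ while remaining in a strip of height $O(1/\sqrt{B})$, so the admissible region for the next center has area up to $O(L/\sqrt{B})$ and the per-step factor is $O(\nu L/\sqrt{B})\to\infty$ since $L\gg\sqrt{B}$; Remark~\ref{beta:range} restricts only the contact angle, not the position of the next grain, so it does not thin this set. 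Second, the asserted aggregate bound $\exp(O(B\log\log B))$ is unsupported (no computation produces a $\log\log B$) and is inconsistent with the statement being proved: if the near cost were independent of $L$ and of that size, the hypothesis $L\ll\sqrt{B}\log B$ would play no role in the domination, yet it is precisely the threshold of the theorem. Moreover, with near steps contributing factors that are not small, the sum over chain lengths $m$ up to $\nu L^2$ and over placements of near versus far steps is uncontrolled. The missing mechanism is the one the paper uses: bound the product of all per-step confinement areas by $\prod_n\mathcal{D}_n^2$, apply the arithmetic--geometric mean inequality, and invoke the total length bound (\ref{geometric}) of Lemma~\ref{geometric:lemma} to get
\[
\prod_{n}\nu\,\mathcal{D}_n^{2}\;\leq\;\left(\frac{\nu}{m-1}\Bigl(m\frac{\bar K_1}{\sqrt{B}}+K_2 L\Bigr)\right)^{2(m-1)}\;\leq\;\left(\frac{\nu\bar K_1}{\sqrt{B}}\right)^{2(m-1)}e^{2\bigl(\frac{K_2}{\bar K_1}L\sqrt{B}+1\bigr)}\,,
\]
so that each step costs $O(1/B)$ on average, the sum over $m$ converges, and the total near cost is $e^{O(L\sqrt{B})}$. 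The comparison $L\sqrt{B}\ll B\log B$, i.e.\ $L\ll\sqrt{B}\log B$, is then exactly what lets the far-region gain win. Without this (or an equivalent aggregate control of the long near-horizontal components), the proof is incomplete.
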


\begin{proof}
Definition~\ref{centers:gamma} implies that $\mu_\nu(\Omega_{h})\leq \mu_\nu( U_h(K,D_{0},\theta,L)  )$. 
Let us then get an estimate on $\mu_\nu( U_h(K,D_{0},\theta,L)  )$. 

We denote by $\Sigma_{m}:=\left\{g:\{1,\dots,m\}\rightarrow\{1, \dots,N\}: \ \text{injective}\right\}$ 
and for every $m\leq N$ and every $g\in\Sigma_{m}$ we let $U^{g}\subset
U_h(K,D_{0},\theta,L)$ be the set configurations for which there exists a family of centers 
$\{ \xi_{g(n)}\}_{n=1,\dots,m}$ satisfying the conditions of Definition~\ref{centers:gamma}. 
Observe that these sets of configurations are not, in general, disjoint. Then
\[
U_h(K,D_{0},\theta,L)=\bigcup\limits_{m=1}^{N}\bigcup\limits_{g\in\Sigma_{m}}U^{g}\,,
\]
and we estimate its probability as follows
\begin{align*}
&\mu_\nu\big(U_h(K,D_{0},\theta,L)\big)=
\frac{1}{L^{2N}}\left(\prod\limits_{n=1}^{N}\int_{U_h(K,D_{0},\theta,L)}d\xi_{n}\right)\\ 
&\leq \frac{1}{L^{2N}}\sum\limits_{m=1}^{N}\sum\limits_{g\in\Sigma_{m}}
\left(\prod\limits_{n=1}^{N}\int_{U^{g}}d\xi_{n} \right)=
\frac{1}{L^{2N}}\sum\limits_{m=1}^{N}N(N-1)\dots(N-m+1)\left( \prod\limits_{n=1}^{N}\int_{U^{\bar{g}}}d\xi_{n} 
\right)
\\
& \leq\sum\limits_{m=1}^{N}\frac{N(N-1)\dots(N-m+1)}{L^{2m}}
\left(\prod\limits_{n=1}^{m}\int_{U^{\bar{g}}}d\xi_{n}\right)\,,
\end{align*}
where we have reordered the first indexes by setting $\bar{g}(j)=j$ with $j\in\{1,\dots,m\}$ 
for every $m$, and in the last inequality we have simply counted the number of elements in each $\Sigma_{m}$ and 
also estimated the measure of the elements $\xi_{n}$ with $n>m$ by the total
measure $L^{2}$.

Now, we consider two types of configurations in each $U^{\bar{g}}$, the
ones for which all points $\xi_{n}$ satisfy that%
\[
\left|  \xi_{n}^{(2)}-v_{1}L\right|  \geq\frac{D_{0}}{\sqrt{B}}\quad
\mbox{for all} \quad n\in\{1,\dots,m\}\,,
\]
and the ones for which there exists at least one $j\ $with $\left|  \xi
_{j}^{(2)}-v_{1}L\right|  <\frac{D_{0}}{\sqrt{B}}$. In the first case, we
have, due to the properties in Definition~\ref{centers:gamma}, that
\[
\left\|  \xi_{n}-\xi_{n+1}\right\|  \leq\frac{K}{D_{0}\sqrt{B}}\,,\quad
\mbox{for}\quad n\in\{0,\dots,m\}\,,
\]
where we abuse notation by referring to the vertical lines $x_{1}=0$ and
$x_{1}=L$ by $\xi_{0}$ and $\xi_{m+1}$, respectively, and denoting the distance
of points to these lines with the norm symbol. Observe that then $m\geq
LD_{0}\sqrt{B}/K$.

Let us denote by 
\[U_{1}^{\bar{g}} = U^{\bar{g}}\cap\{\omega\in\Omega_\nu(L): \, |\xi_{n}^{(2)}-v_{1}L |  
\geq\frac{D_{0}}{\sqrt{B}}\;\forall n\in\{1,\dots,m\} \}\] 
and 
\[U_{2}^{\bar{g}}=U^{\bar{g}}\setminus\{\omega\in\Omega_\nu(L):\, |\xi_{n}^{(2)}-v_{1}L|
\geq\frac{D_{0}}{\sqrt{B}}\;\forall n\in\{1,\dots,m\} \}\,,
\] 
then
\begin{equation}
\label{Ies}
\mu_\nu\big(U_h(K,D_{0},\theta,L)\big)    \leq    I_{1} +I_{2}
\end{equation}
with
\begin{equation}\label{Ies2}
I_j = \sum\limits_{m=M_j}^{N} \frac{N(N-1) \dots(N-m+1)}{L^{2m}}
 \prod\limits_{n=1}^{M}\int_{U_{j}^{\bar{g}}}  d \xi_{n}\,, \quad j=1,2\,,
\end{equation}
where $M_1\geq LD_{0}\sqrt{B}/K$ and $M_2$ will be determined bellow.

We first estimate $I_1$. For $B$ large enough so that $\nu \pi K^{2}/((D_{0})^{2}B)<1$,
we have that
\begin{align*}
I_{1}  &  \leq\sum\limits_{m\geq LD_{0}\sqrt{B}/K  }^{N}N(N-1)\dots(N-m+1) \frac{L^2}{L^{2m}}
\,\pi^{m-1}\left(  \frac{K}{D_{0}\sqrt{B}}\right)  ^{2m-2}\\
&  \leq \nu L^{2}\sum\limits_{m\geq LD_{0}\sqrt{B}/K}^{N} 
\left(  \frac{\nu\pi K^{2}}{(D_{0})^{2}B}\right)  ^{m-1}\leq 
L^{2} \left(  \frac{\nu\pi K^{2}}{(D_{0})^{2}B}\right)  ^{\frac{LD_{0}\sqrt{B}}{K}}\,,
\end{align*}
and this tends to zero provided that
\begin{equation}
(B)^{L\sqrt{B}}\gg L^{2}\,. \label{theta:cond1}%
\end{equation}
In the current limit, $L^{2}\gg B$ and $B\rightarrow\infty$, then for
large enough $\theta$ it is clear that $(B)^{L\sqrt{B}}>(B)^{L}$ and so 
(\ref{theta:cond1}) holds and $I_1 \to 0$ as $L\to \infty$.

In order to estimate $I_{2}$, for a $\omega\in U_{2}^{\bar{g}}$ we define
a subset of the points in $\omega$ as follows. First observe that, by the
definition of $U_{2}^{\bar{g}}$, there is a $j_{0}\in\{1,\dots,m\}$, such
that $\| \xi_{j_{0}}^{(2)}-v_{1}L\| <\frac{D_{0}}{\sqrt{B}}$, on the other 
hand there exist a $j_{1}\in\{1,\dots,m\}$ that 
$\| \xi_{j_{1}}^{(2)}-v_{1}L \|>2D_{0}(R+1/\sqrt{B})$. Then we can define
the first point of the subset by taking $\xi_{l_{1}}=\xi_{j_{0}}$ and
$\xi_{l_{T}}=\xi_{j_{1}}$ (the last point of the subset). For
each $k\in\{1,\dots,M_2\}$, we choose the points as follows%
\[
l_{k+1}=\min\left\{  j>l_{k}:\;|\xi_{l_{k+1}}^{(2)}-v_{1}L|\,>
|\xi_{l_{k}}^{(2)}-v_{1}L|\,+\frac{K}{2\sqrt{B}}\frac{1}{(1+\sqrt{B}|\xi_{l_{k}}^{(2)}-v_{1}L|)}\right\}\,,
\]
then by the properties of Definition~\ref{centers:gamma}, and this definition,
the subset satisfies:%
\begin{align*}
\;|\xi_{l_{k+1}}^{(2)}-v_{1}L|\,  &  >|\xi_{l_{k}}^{(2)}-v_{1}L|\,
+\frac{K}{2\sqrt{B}}\frac{1}{(1+\sqrt{B}|\xi_{l_{k}}^{(2)}-v_{1}L|)}\\
\;|\xi_{l_{k+1}}^{(2)}-v_{1}L|\,  &  <|\xi_{l_{k}}^{(2)}-v_{1}L|\,
+2\frac{K}{\sqrt{B}}\frac{1}{(1+\sqrt{B}|\xi_{l_{k}}^{(2)}-v_{1}L|)}\,,
\end{align*}
and also there exist a $k_{0}$ such that $|\xi_{l_{k_{0}}}^{(2)}-v_{1}L|>D_{0}/(2\sqrt{B})$.

We can apply Lemma~\ref{sqrt:growth} with $a_k= \sqrt{B}|\xi_{l_{k}}^{(2)}-v_{1}L|$ 
to obtain an estimate on $M_2$, namely,
\[
h <C_1\frac{\sqrt{T}}{\sqrt{B}}\,,
\]
so
\[
M_2>M_{0}:=\left(\frac{h}{C_1}\right)^{2} B
\,.
\]

We then estimate the second term in (\ref{Ies}) as follows,
\begin{align*}
I_{2}  &  \leq\sum\limits_{m\geq M_{0}}^{N}\frac{N(N-1)\dots(N-m+1)}{L^{2m}}
\left( \prod\limits_{n=1}^{M_{0}}  \int_{U_{2}^{\bar{g}}\cap\{\xi_{l_{k}}\}_{k}}   d\xi_{n} \right)  
\left( \prod\limits_{n=1}^{m-M_{0}} \int_{U_{2}^{\bar{g}}\smallsetminus\{\xi_{l_{k}}\}_{k}}  d\xi_{n}\right) \\
&  \leq\sum\limits_{m\geq M_{0}}^{N}\frac{N(N-1)\dots(N-m+1)}{L^{2m}}
\left(\prod\limits_{n=1}^{M_{0}} \int_{U_{2}^{\bar{g}}\cap\{\xi_{l_{k}}\}_{k}} d\xi_{l_{n}}^{(2)}     \right)  
\left(\prod\limits_{n=1}^{m-_M{0}}  \int_{U_{2}^{\bar{g}}\smallsetminus\{\xi_{l_{k}}\}_{k}} d\xi_{m_{n}}^{(2)}\right)  
\left(\prod\limits_{n=1}^{m}  \int_{U_{2}^{\bar{g}}}  d\xi_{n}^{(1)}   \right) \\
&  \leq \nu L^3 \sum\limits_{m\geq M_{0}}^{N} \nu^{m-1}
\left(  \prod\limits_{n=2}^{M_{0}}\frac{2K}{C_{2}\sqrt{B}\sqrt{n}}\right)
\left(  \prod\limits_{n=2}^{m}{\cal D}_{n}^{2}\right)= 
\nu L^{3}\frac{1}{\sqrt{M_{0}!}}\left(\frac{2K}{C_{2}\sqrt{B}}\right)^{M_{0}-1}\sum\limits_{m\geq M_{0}}^{N}
\left(  \prod\limits_{n=2}^{m} \sqrt{\nu}{\cal D}_{n}\right)  ^{2}\,.
\end{align*}
Here we have indexed the elements of $U_{2}^{\bar{g}}\smallsetminus\{\xi_{l_{k}}\}_{k}$ by $m_n$ with 
$n\in\{1,\dots,m-M_0\}$. Also, in the last inequality, we have used that the elements of $\{\xi_{l_{k}}\}_k$ 
satisfy $|\xi_{l_{k+1}}^{(2)} -\xi_{l_k}^{(2)} |\leq   2K / B|\xi_{l_k}-v_1 L|$ and Lemma~\ref{sqrt:growth}.
We recall that ${\cal D}_{n}=\| \xi_n-\xi_{n+1}\|$.

Then, using the inequality of arithmetic and geometric means and Lemma~\ref{geometric:lemma}, 
we obtain
\begin{align*}
I_{2}  
&  \leq \nu L^{3}\frac{1}{\sqrt{M_{0}!}}\left(\frac{2K}{C_{2}\sqrt{B}}\right)^{M_{0}-1}\sum\limits_{m\geq M_{0}}^{N}
\left(  \frac{1}{m-1}\sum_{n=2}^{m} \sqrt{\nu} {\cal D}_{n}\right)  ^{2(m-1)}\\
&  \leq L^{3}\frac{1}{\sqrt{M_{0}!}}\left(  \frac{2K}{C_{2}\sqrt{B}}\right)^{M_{0}-1}\sum\limits_{m\geq M_{0}}^{N}
\left(  \frac{\nu}{m-1}\left(m\frac{\bar{K}_1}{\sqrt{B}} + K_2 L\right)  \right)  ^{2(m-1)}\,.
\end{align*}
We rewrite
\[
\left(  \frac{\nu}{m-1}\left(m\frac{\bar{K}_1}{\sqrt{B}} + K_2 L\right)  \right)  ^{2(m-1)} = 
\left(\frac{\nu \bar{K}_1}{\sqrt{B}} \right)^{2(m-1)}
 \left(\frac{m}{m-1} + \frac{K_2}{\bar{K}_1}\frac{L\sqrt{B}}{m-1} \right)^{2(m-1)}\,,
\]
then, for all $m\geq M_{0}$ and $B$ large enough
\[
I_{2}\leq L^{3}\frac{1}{\sqrt{M_{0}!}}\left(\frac{\nu \bar{K}_1}{\sqrt{B}} \right)^{2(M_0-1)} e^{2\left(\frac{K_2}{\bar{K}_1} L\sqrt{B} +1\right)}\,.
\]

We recall that $M_{0}\propto B$ and using (\ref{fact:stirling}) we can conclude
 that $I_2$ tends to zero as $L\to \infty$ provided that largest exponential 
growing factor is controlled by the fastest exponentially decreasing one, 
that is if $L\sqrt{B}\ll B\log B$ as $L\to \infty$, and this implies the result.

\end{proof}

Theorem~\ref{theo:case3} can be formulated in the case $1/\sqrt{B}\lesssim R$ by replacing $\sqrt{B}$ 
by $R^{-1}$ when $R\to 0$. The proof is analogous, except for the proof of Lemma~\ref{geometric:lemma}. 
In this case, the conclusion is simpler to achieve, by realising that the longest part of an interface 
joining the least number of grains is one crossing $x_2=v_1 L$, all other parts are away from this one 
a distance of order $1/\sqrt{B}$, applying Corollary~\ref{corol1} the lemma follows.


\subsection{The regime $L=O(1/B)$ as $B\to 0$}\label{regime:4}
We only consider here the case $L=O(1/B)$, and to simplify we assume that $L=a/B$ 
for some $a>0$ of order one. The reason for this is that we expect the maximum difference
in height in a full solution $\Gamma$ to be of order $1/B$. Indeed, if $\Gamma$ reaches
 distances from $x_2=\lambda$ much larger than $1/B$, the elemental components of $\Gamma$ 
on that region would have a very small radius of curvature, and could not join grains  
at an average distance of order one. 

In this section we further restrict the height of the possible $\Gamma$'s 
to the interval $[\lambda-\varepsilon_{0}L,\lambda+\varepsilon_{0}L]$ where, essentially 
\[
0 < \varepsilon_{0} <  \min\left\{ v_{1}, 1-v_{1},\frac{1}{2a}\right\}
\]
and (for reasons that become apparent below) $\varepsilon_0\in\mathbb{Q}$. 
This ensures that the curvature of $\Gamma$ in this region is small enough 
and will allow to approximate the elemental components of $\Gamma$ by 
straight lines, since for $x_2$ in this interval $|H| \leq a\varepsilon_0<1/2$.
We recall, however, that this restriction depends on the configuration of 
grains, since $\lambda=v_1 L$ depends on $\Gamma$. We shall see that we can fix 
a volume $V$ that is close to $v_1$ for the configurations of interest.

We show that, in this regime, given {\it any} regular curve without 
self-intersections contained in 
$[0,L]\times[\lambda - \varepsilon_{0}L,\lambda + \varepsilon_{0}L]$
there are configurations $\omega\in\Omega_\nu(L)$ 
with probability close to one, such that the interface solution 
$\Gamma(\omega,\lambda,\theta)$ is close to such curve 
in some suitable sense. 

Before we make precise this theorem, let us give the main idea of the proof and 
establish some settings and definitions. We make a partition of the domain into small 
squares $Q$. For any given smooth curve $\Lambda$ without self-intersections
 and compatible with the prescribed volume condition, we show that the probability 
that a solution interface $\Gamma$ satisfies $\Gamma\cap Q \neq \emptyset$ for every 
$Q$ that has $Q\cap \Lambda \neq \emptyset$ tends to one as $L\to \infty$. 
This means that all curves compatible with the fixed volume condition can be approximated 
by an interface $\Gamma$. The proof of that the probability of $\Gamma\cap Q\neq \emptyset$ 
if $Q\cap \Lambda\neq \emptyset$ tends to zero, will be formulated as a percolation problem, 
see below for the details. 

We can now state the main theorem of this section, but first we define the notion of compatible curves. 
\begin{defin}[Compatible curves]\label{comp:curve}
Let $V\in(0,1)$ and $\varepsilon_0\in(0,1)\cap \mathbb{Q}$ with 
\[
\varepsilon_0<\min\left\{V,1-V,\frac{1}{2a}\right\}\,.
\] 
We say that a $C^1$ curve without self-intersections 
$\Lambda:[0,1]\rightarrow [0,1]^2$ is compatible with $(V,\varepsilon_0)$ if 
$\Lambda([0,1])\subset [0,1]\times [V-\varepsilon_0,V+\varepsilon_0]$, 
$\Gamma(0)\in\{ 0\}\times[V-\varepsilon_0,V+\varepsilon_0]$, 
$\Gamma(1)\in\{ 1\}\times[V-\varepsilon_0,V+\varepsilon_0]$ and the area under
$\Gamma$ is equal to $V$.
\end{defin}

We remark that we compare real interface solutions $\Gamma$ to compatible 
curves in the rescaled domain $[0,1]\times[V-\varepsilon_0,V+\varepsilon_0]$.
We shall denote by $\tilde{\Gamma}$ the result of expressing $\Gamma$ in the variables $x'= x/L$. Then:

\begin{thm}\label{percolation:theo} 
Assume that $B=a/L$ for some $a>0$. Given $V\in(0,1)$ let $\varepsilon_0$ as in Definition~\ref{comp:curve} and let 
$\Lambda\in C^{1}([0,1])$ be a curve compatible with $(V,\varepsilon_0)$.
Then, there exist $\nu_0>0$ and $L_0>0$ such that for all $\nu\geq \nu_0$ and $L\geq L_{0}$ there exists 
$\alpha_{\max}(R)\geq 0$, such that $\alpha_{max}(R)\to\pi/ 2$ as $R\to 0$, and ${\cal U}\in \Omega_\nu(L)$ and a 
$\varepsilon(L)>0$ with $B\ll\varepsilon(L)$, $\varepsilon(L) \to 0$ as $L\to\infty$ such that
\[
\mu_{\nu}({\cal U}) \geq \delta_{\varepsilon}(L) \quad \mbox{with}  \quad 0<
\delta_{\varepsilon}(L)\longrightarrow 1^{-} \quad \mbox{as} \quad  L\to  \infty\,,
\]
and that for any $\omega\in {\cal U}_{L}$, there exists a solution interface 
$\Gamma(\omega,\lambda,\theta)$ with contact angle $\alpha\in[-\pi/2,\alpha_{max}(R)]$ that satisfies
\[
\tilde{\Gamma}(\omega,\lambda,\theta)\subseteq T_{\sqrt{2}\varepsilon(L)}(\Lambda)
\,, \ \Lambda\subseteq T_{\sqrt{2}\varepsilon(L)}(\tilde{\Gamma}(\omega,\lambda,\theta))\,,
\]
with $v_1\to V$ as $L\to \infty$. 
\end{thm}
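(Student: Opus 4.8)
The plan is to realise the statement as a percolation-type construction: first fix the target curve $\Lambda$ and the rescaled domain $[0,1]\times[V-\varepsilon_0,V+\varepsilon_0]$, then cover it by a grid of small squares $Q$ of side $\varepsilon(L)$ (with $\varepsilon(L)$ to be chosen so that $B\ll\varepsilon(L)\to0$; e.g. $\varepsilon(L)=L^{-\kappa}$ for a suitable $\kappa\in(0,1)$, so that each $Q$ in the original coordinates has side $\varepsilon(L)L\gg1$ and hence contains many grains with high probability). I would distinguish the ``good'' squares $Q$ meeting $\Lambda$ from the rest. The first step is a purely geometric input: since $\Lambda$ is $C^1$ without self-intersections and, by the smallness of $\varepsilon_0$ relative to $1/(2a)$, the required curvature $|H|\le a\varepsilon_0<1/2$ of an elemental component in this strip is so mild that elemental components are almost straight; so inside the good squares I can prescribe a target polygonal path following $\Lambda$ whose segments have length of order one and direction within a small cone of $\Lambda'$, and such that the contact-angle constraints from Lemma~\ref{beta:values} and Remark~\ref{beta:range} (which only restrict $\rho_c$ for $-1\le C<1$, i.e. essentially when air is below, and become negligible as $R\to0$) can be met; this is where $\alpha_{\max}(R)$ and the conclusion $\alpha_{\max}(R)\to\pi/2$ enters.

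The second step is the probabilistic core. For each good square $Q$ I would estimate the probability that $Q$ contains a chain of grains realising the desired approximately-straight elemental components joining $\{0\}\times\cdots$ through all good squares to $\{L\}\times\cdots$, using the single-component distance bounds of Lemmas~\ref{single:distance}, \ref{single:distance2}, \ref{single:distance3} together with Corollary~\ref{corol1}: because we are in the regime $C=O(1)$ (in fact $|C|$ bounded away from $1$, and mostly $C\in(-1,0]$ where the oscillating non-self-intersecting curves of Remark~\ref{upside:down}(ii) are available), two grains at mutual distance $2R+O(1)$ with suitably placed contact surfaces can always be joined, and a grain exists in a prescribed $O(1)$-ball with probability bounded below independently of $L$. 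Iterating over the $O(1/\varepsilon(L))$ good squares along $\Lambda$, and using independence of the grain positions, the probability that \emph{every} good square carries its piece of the chain is at least $(1-e^{-c\nu\,(\varepsilon(L)L)^2})^{O(1/\varepsilon(L))}$ for $\nu\ge\nu_0$, which tends to $1$ provided $\nu_0$ and $L_0$ are large; this defines $\mathcal U$ and the lower bound $\delta_\varepsilon(L)\to1^-$. Simultaneously I would show that on $\mathcal U$ the constructed $\Gamma$ \emph{cannot} stray far from $\Lambda$: a component entering a ``bad'' square and returning would have to traverse a distance incompatible with the distance bounds of the cited lemmas combined with the geometric Lemma~\ref{geometric:lemma}, so $\tilde\Gamma\subseteq T_{\sqrt2\varepsilon(L)}(\Lambda)$; conversely the chain passes through each good square, giving $\Lambda\subseteq T_{\sqrt2\varepsilon(L)}(\tilde\Gamma)$.

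The third step is the volume/reference-height consistency. Since $\lambda=v_1L$ and $v_1$ depends on the configuration through $v_1L^2\approx v_0L^2+n\pi R^2$ with $n$ the number of grains below $\Gamma$, I would choose $v_0$ so that the area under a curve tracking $\Lambda$ equals $V$ in the limit, and show that on $\mathcal U$ the actual $v_1$ converges to $V$ as $L\to\infty$ (the correction $n\pi R^2/L^2$ is controlled because $R\to0$ fast enough and $\Gamma$ stays $O(\varepsilon(L)L)$-close to the horizontal strip, so $n/L^2$ is determined up to $o(1)$ by the area under $\Lambda$). This closes the loop between the a priori reference height $\lambda$ used to solve \eqref{S1E1} and the one realised by the constructed interface.

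\textbf{Main obstacle.} The delicate point is the interplay between the two error scales: the grid size $\varepsilon(L)$ must be \emph{large} compared to the grain spacing $1$ in rescaled units (i.e. $\varepsilon(L)L\to\infty$) so that each good square contains enough grains to make the per-square success probability exponentially close to $1$, yet \emph{small} enough that the polygonal approximation to $\Lambda$ inside each square has the right $C^0$ (and essentially $C^1$) fidelity and that $B\ll\varepsilon(L)$ so the ``almost straight'' estimate $|H|<1/2$ is safely in force; reconciling these while keeping the number of good squares $O(1/\varepsilon(L))$ small enough that the product of per-square probabilities still tends to $1$ is the crux. The secondary difficulty is bookkeeping the contact-angle admissibility uniformly along $\Lambda$ (the cone constraint on $\beta_c$ from Lemma~\ref{beta:values} must be compatible with $\Lambda'$ at every scale as $R\to0$), which is what forces the restriction $\alpha\in[-\pi/2,\alpha_{\max}(R)]$.
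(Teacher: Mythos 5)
There is a genuine gap in the probabilistic core of your argument. You propose to prescribe a polygonal path tracking $\Lambda$ through each good square $Q$ and then ask that a grain be present in each of the $O(\varepsilon(L)L)$ unit-size cells along that path. For fixed $\nu$, a single unit cell (more precisely, the sub-square $R_\kappa$ of area $s$) is occupied with probability $1-(1-s/L^2)^{\nu L^2}\to 1-e^{-\nu s}$, which is bounded away from $1$; hence the probability that a \emph{prescribed} chain of $\sim\varepsilon(L)L$ cells is fully occupied behaves like $(1-e^{-\nu s})^{\varepsilon(L)L}\to 0$ as $L\to\infty$. Your claimed per-square success probability $1-e^{-c\nu(\varepsilon(L)L)^2}$ is therefore unsubstantiated (and far too strong); with it the rest of your union bound would go through, but the mechanism you describe cannot produce it. The paper resolves exactly this difficulty by \emph{not} prescribing the path: it sets up a genuine site-percolation problem inside each $Q$ (sites $=$ unit cells, open iff $R_\kappa$ contains a center), proves a crossing estimate between two boundary arcs of $\partial Q$ of macroscopic length, and obtains the bound $1-e^{-K\varepsilon(L)L}$ by passing to the dual event: by a Jordan-curve-type lemma the two arcs fail to be connected iff the complementary boundary pieces are $*$-connected by closed sites, and a Peierls count (at most $9^j$ $*$-chains of length $j$, each of probability at most $e^{-\nu js}$, with $j\geq M/4$) makes this failure probability exponentially small once $\nu$ is large enough that $\log 9-\nu s<-1/2$. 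This duality-plus-counting step is the missing idea in your proposal; without it, or some equivalent renormalisation/coarse-graining argument, the per-square probability does not tend to $1$.

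Two secondary points. First, you attribute the restriction $\alpha\in[-\pi/2,\alpha_{\max}(R)]$ to the cone constraint on $\beta_c$ from Lemma~\ref{beta:values}; in the paper it arises instead from Lemma~\ref{Q:construct}(ii), i.e.\ from shrinking the sub-squares $R_\kappa$ (the parameter $s$) so that two consecutive arcs meeting at a grain at the narrowest possible angle do not self-intersect, which forces a bound on the admissible contact angle depending on $R$. Second, Lemma~\ref{geometric:lemma} is a tool for the regime $B\to\infty$ and is not available here; the confinement of $\Gamma$ to the strip $[\lambda-\varepsilon_0 L,\lambda+\varepsilon_0 L]$ is imposed a priori via the choice $\varepsilon_0<1/(2a)$ (which keeps $|H|<1/2$ and the arcs nearly straight), not derived from that lemma. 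Your treatment of the volume consistency $v_1\to V$ and the choice $\varepsilon(L)=L^{-p}$ with the final union bound are in line with the paper.
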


Here $T_{\sqrt{2}\varepsilon}$ stands for the tube of radius $\sqrt{2}\varepsilon$ around a curve. 
That is it contains the points of the plane that are at a distance smaller than or equal to 
$\sqrt{2}\varepsilon$ of the given curve.

\begin{rem}We remark, that if $v_0\in(0,1)$ is a given volume fraction of liquid, 
then one could tune the theorem a bit to 
obtain solutions for this prescribed volume with probability close to one. This can be done by taking 
$V=v_0/(1-\nu\pi R)$ and $R$ sufficiently small, since the expected number of grains under $\Lambda$ is 
$\nu L^2 V$ and $v_0$ is related to $v_1$ by (\ref{v0:vs:v1}).
\end{rem}

Next, we define partitions of $[0,L]\times[( V - \varepsilon_{0})L,(V + \varepsilon_{0})L]$ into 
small squares of equal size. 
In each of such squares that intersect $\Lambda$ we formulate a percolation-like problem. 

\begin{defin}\label{Q:def}
Let 
\begin{equation}\label{Restricted:domain}
{\cal Q}:=[0,L]\times[(V - \varepsilon_{0})L, (V + \varepsilon_{0})L]\,.
\end{equation}
Let $\varepsilon>0$, depending on $L$, such that $B\ll \varepsilon \ll 1$ and such that $1/\varepsilon$, 
$2\varepsilon_0/\varepsilon\in\mathbb{N}$. 

We define $\{Q\}_{Q\in I_\varepsilon}$ to be a partition of ${\cal Q}$ 
into squares of size $\varepsilon L \times \varepsilon L$ such that if $Q$, $\tilde{Q}\in I_\varepsilon$ then 
$Q\cap \tilde{Q} =\emptyset$ if $Q\neq \tilde{Q}$, unless $Q$ and $\tilde{Q}$ have one edge or one vertex in common.

We let $M:=\varepsilon(L) \,L$ for simplicity.
\end{defin}

Clearly, $\# I_{\varepsilon}= 2\varepsilon_0/\varepsilon^2<\infty$. We also observe that this definition requires 
$\varepsilon_0\in\mathbb{Q}$.

We now consider a subdivision of each $Q$ in smaller sub-squares of size of order one. 
We assume that the size is exactly one, for that we either need to assume that $L\in\mathbb{N}$ or to
rescale the domain. We henceforth adopt the first choice without loss of generality.
  
\begin{defin}\label{SR:def} Assume that $L\in\mathbb{N}$
Let $\varepsilon(L)$ and $Q\in I_\varepsilon$ as in Definition~\ref{Q:def}, 
then we let $Q=\cup_{\kappa\in J} S_\kappa$ where $\{S_\kappa\}_{\kappa\in J}$ is a partition of $Q$ 
into closed squares of size one that cover $Q$ and such that their interior sets are disjoint. 
 
Moreover, for a $s\in(0,1)$ and for each $\kappa\in J$, 
we let $R_\kappa \subset S_\kappa$ be a closed squared with $|R_\kappa| =s$ centered in $S_\kappa$. 
\end{defin}

Observe that $\# J$ increases as $L\to \infty$, but $|S_\kappa|=1$, and this value
equals the average distance between centers (and is of the same order of the minimal radius of curvature 
in this domain). 
Then we can guarantee that if there are grains in neighbouring $S_\kappa$'s, 
then there is a connection by a solution $\gamma$ of (\ref{S1E1}). 
In this way the proof reduces to estimating the probability of finding grains in the $S_\kappa$'s 
that can form a connection between the given portions of $\partial Q$ that intersect $\Lambda$. 
This problem is reminiscent of a site percolation problem in two dimensions 
(see e.g. \cite{stauffer}, \cite{grimmett}), 
although the probabilities assigned to the sites, as we explain below, are weakly correlated.

In fact, we shall restrict the connections to grains that fall in adjacent $R_\kappa$'s. 
This is because we have to guarantee that elemental components do not intersect each other, 
this is possible by reducing $|R_\kappa|=s$, as we shall see. But first, we have the following 
result that guarantees the existence of $\gamma$'s joining grains in adjacent sites.
This result is independent of $\varepsilon$.

\begin{lem}\label{Q:construct}
\begin{enumerate}
\item Let $S_\kappa$ and $S_{\kappa'}$ such that $S_\kappa\cap S_{\kappa'}\neq \emptyset$ 
with one edge in common. Then if there exist $\xi\in R_\kappa $ and $\tilde{\xi}\in R_{\kappa'}$ then, 
there exists a $L_0>0$ such that for all $L\geq L_0$ there exists a  
solution of (\ref{S1E1}) that joins $\partial_{R} B(\xi)$ and $\partial B_{R}(\tilde{\xi})$.

\item There exists $s_0\in(0,1)$ and $\alpha_{max}(R)$ with $\alpha_{max}(R)\to \pi/2$ as $R\to 0$, 
such that if $\kappa$, $\kappa'$ and $\tilde{\kappa}\in\{1,\dots,M\}^2$ have a vertex in common 
the arc that joins a grain with center in $\xi\in R_\kappa$ to a grain with center in $\xi'\in R_{\kappa'}$ 
do not intersect the arc thar joins the grain $B_R(\xi')$ to a grain with center $\xi\in R_{\tilde{\kappa}}$. 
\end{enumerate}
\end{lem}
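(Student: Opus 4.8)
The plan is to prove the two parts separately, in each case reducing the statement to an elementary estimate on the curves constructed in Section~\ref{sec:construction1} in the regime $B = a/L \to 0$. Throughout, the key quantitative input is that for $x_2$ in the restricted strip $[(V-\varepsilon_0)L,(V+\varepsilon_0)L]$ one has $|H| = B|x_2 - \lambda| \le a\varepsilon_0 < 1/2$, so every elemental component joining two grains in that strip has radius of curvature $\ge 2$; in other words the curves are nearly straight on the scale of a single box $S_\kappa$ (whose side is $1$).

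\textbf{Part (i).} Fix two adjacent unit squares $S_\kappa, S_{\kappa'}$ sharing an edge, and centres $\xi \in R_\kappa$, $\tilde\xi \in R_{\kappa'}$. Since $|R_\kappa| = s < 1$ and each $R_\kappa$ is centred in its $S_\kappa$, the Euclidean distance $\|\xi - \tilde\xi\|$ lies in a fixed interval $[1 - \sqrt{s},\, 1 + \sqrt{s}]$ (more precisely between $1-\sqrt s$ and $\sqrt{(1+\sqrt s)^2 + s}$), hence is bounded away from $0$ and from $\infty$ uniformly in $L$. Now invoke the gluing analysis: by Lemma~\ref{complete:curves} and Remark~\ref{upside:down}, for the value of $C$ making the curve connect exactly these two points — one can e.g.\ take the oscillating family with $C \in (-1,0]$, whose complete curve has no self-intersection and (Lemma~\ref{single:distance3}, Lemma~\ref{C:estimates}~(i)) spans horizontal distances that, for $B = a/L \to 0$, range continuously over an interval that eventually contains $\|\xi-\tilde\xi\| - 2R$ once $L \ge L_0$. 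The Young condition at the two grains is then satisfied by adjusting the contact angle inside $[-\pi/2,\pi/2]$; by Remark~\ref{beta:range} the admissible range of the contact point angle $\rho_c$ is nonempty precisely because $\eta < \pi$ for $C \in (-1,0]$, and one checks that for $R$ small the geometric constraint is met. This yields the connecting solution $\gamma$; note the argument uses $B \to 0$ only through the fact that $x_2^{max}(C;B) = \sqrt{2(C+1)/B} \to \infty$, so the curves are long enough and flat enough.

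\textbf{Part (ii).} Here one must choose $s_0$ so that two elemental components emanating from a common grain $B_R(\xi')$ into two different neighbouring boxes cannot cross. The estimate to exploit is the smallness of the curvature: each such $\gamma$ stays within a tube of radius $O(B) = O(1/L)$ around the straight segment joining the two relevant centres (this follows from $|H| \le a\varepsilon_0$ together with the curve being a near-linear graph, as in the integration of (\ref{Dx1:Dx2})). Two straight segments from $\xi'$ into $R_\kappa$ and $R_{\tilde\kappa}$ — with $\kappa,\kappa',\tilde\kappa$ only sharing a vertex — make an angle bounded below by a fixed positive constant once the target rectangles have side $s_0 < 1$ small enough, so the two tubes are disjoint except near $\xi'$; near $\xi'$ disjointness is guaranteed because the two contact points on $\partial B_R(\xi')$ are separated on the circle, which is where $\alpha_{max}(R)$ enters: the admissible contact angles must keep the two contact arcs apart, and the width of the forbidden window shrinks as $R \to 0$, giving $\alpha_{max}(R) \to \pi/2$. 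So the recipe is: first pick $s_0$ from the "well-separated directions" requirement (a purely planar-geometry computation), then shrink $R$ and cut the contact-angle interval to $[-\pi/2,\alpha_{max}(R)]$ to handle the behaviour at the shared grain.

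\textbf{Main obstacle.} The delicate point is part (ii): one genuinely needs the curvature bound, the box-geometry separation, \emph{and} the contact-angle restriction to cooperate, and the dependence $\alpha_{max}(R) \to \pi/2$ must be extracted honestly rather than asserted. Concretely, I expect the hardest estimate to be showing that the two arcs leaving $B_R(\xi')$ do not re-approach each other after leaving the immediate neighbourhood of $\xi'$ — this requires combining the global no-self-intersection property of the $C \in (-1,0]$ family (Remark~\ref{upside:down}~(ii)) with the near-straightness, and controlling the error terms uniformly in the (bounded) configuration of the three centres. The other parts are routine once the interval of admissible $\|\xi-\tilde\xi\|$ is matched against the horizontal-span estimates of Lemmas~\ref{C:estimates} and~\ref{C:estimates:2}.
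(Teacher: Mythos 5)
Your overall strategy (reduce everything to the near-straightness of the arcs in the strip where $|H|\le a\varepsilon_0<1/2$) is the right starting point, but both parts contain a concrete error that breaks the argument.

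In part (i), the family $C\in(-1,0]$ cannot do the job. Those oscillating curves satisfy $|\bar{x}_2|\le x_2^{max}(C;B)=\sqrt{2(C+1)/B}\le\sqrt{2/B}=\sqrt{2L/a}$, so they stay within a height $O(\sqrt{L})$ of the reference line $x_2=v_1L$, whereas the boxes $S_\kappa$, $S_{\kappa'}$ may sit anywhere in the strip of half-width $\varepsilon_0 L$. To reach height $\bar{x}_2\sim\varepsilon_0 L$ one needs $C\approx B\bar{x}_2^2/2\sim a\varepsilon_0^2L/2$, i.e.\ $C$ large of order $L$, and the relevant object is then a short sub-arc of that curve, not a full ``piece'' in the sense of Lemma~\ref{funda:piece}; matching the horizontal spans of Lemmas~\ref{C:estimates}--\ref{C:estimates:2} against $\|\xi-\tilde{\xi}\|$ is therefore not the right comparison. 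The paper instead argues locally: along one connecting arc the curvature varies by at most $\Delta H<2a/L$, so the arc is close to a circular arc of radius $\ge(a\varepsilon_0)^{-1}>2$; straight segments (and circles of large radius) joining $\partial B_R(\xi)$ and $\partial B_R(\tilde{\xi})$ with \emph{any prescribed} contact angle exist by elementary geometry, and the actual solution is obtained by adjusting $C$ and the height. Note also that the contact angle is prescribed, not a free parameter to be ``adjusted'' as you suggest.

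In part (ii), your tube radius $O(B)=O(1/L)$ is wrong. The curvature bound is the fixed constant $a\varepsilon_0$ (only its \emph{variation} along one arc is $O(1/L)$), so an arc of chord length up to $1+\sqrt{s}<2$ deviates from its chord by up to roughly $\ell^2 a\varepsilon_0/8$, an $O(1)$ quantity comparable to the box size. Hence the choice of $s_0$ cannot be a ``purely planar-geometry computation'' decoupled from the curvature: if the deviation really were $O(1/L)$, any $s_0<1$ would work for large $L$, which is not the case. The paper's proof quantifies this: the worst-case angle at the shared grain satisfies $\cos\phi_s=2\sqrt{s}/(s+1)$, the critical radius below which the two arcs cross is $r_{min}(s)=\sqrt{s+1}/(2\sin(\phi_s/2))$, and $s_0$ is fixed by $r_{min}(s_0)=(a\varepsilon_0)^{-1}$; for $R>0$ the additional requirement $\rho_c>-\pi/4$ at the shared grain is what produces the restriction $\alpha\le\alpha_{max}$. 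Your instinct that the contact-angle restriction enters at the shared grain is correct, but the quantitative link between $s_0$ and $a\varepsilon_0$ is the essential missing step.
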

\begin{proof}
{\it Proof of (i):} Observe that the centers $\xi$ and $\tilde{\xi}$ satisfy that 
$\|\xi-\tilde{\xi}\|\leq \sqrt{s+2s^{\frac{1}{2}}+1}<2$. On the other hand, 
$|H|<a\varepsilon_0$, in particular the radius of curvature of a solution joining these grains, say $r$, 
has $|r|_\infty>1/a\varepsilon_0>2$.  

If there is a solution $\gamma$ that joins $\partial B(\xi,R)$ and $\partial B(\tilde{\xi},R)$, 
then the variation of the curvature of $\gamma$, $\Delta H < a|\xi_2-\tilde{\xi}_2|/L<2a/L$ 
(where $\Delta H=|H_{max}(\gamma)-H_{min}(\gamma)|$ is the difference of the maximum curvature 
and the minimum curvature of $\gamma$). This means that such a $\gamma$ is very close to a 
circle with large radius of curvature.

On the other hand, there are straight lines that join $\partial B(\xi,R)$ and $\partial B(\tilde{\xi},R)$ 
with any prescribed contact angle. 
Indeed, there are two parallel lines that are tangential to $\partial B(\xi,R)$ and $\partial B(\tilde{\xi},R)$. 
One has contact angle $\pi/2$ and the other has contact angle $-\pi/2$. Then for any given contact angle, 
there is a segment parallel to these ones that joins $\partial B(\xi,R)$ and $\partial B(\tilde{\xi},R)$ 
with that contact angle. This is also true for circumferences of large enough radius, being the smallest 
such circumference the one that has the two grains inscribed at the largest possible distance; 
i.e. one with diameter $\|\xi-\tilde{\xi}\|+2R$.

We recall that $H(s)=\beta'(s)$ where $\beta\in[-\pi,\pi]$ is the angle from $\mathbf{e}_{1}$ 
to the tangent vector $\mathbf{t}$, thus $|\beta'(s)|<a\varepsilon <1/2$ and $|\beta''(s)|<a/L$. 
This implies that for $L$ sufficiently large we can find a solution $\gamma$ close to a large circumference that joins 
$\partial B(\xi,R)$ and $\partial B(\tilde{\xi},R)$ with a prescribed contact angle by adjusting the value of $C$ 
and the height $x_2$ (see (\ref{A1E1})).

{\it Proof of (ii):} Let us see that reducing if necessary the size of each $R_\kappa$ the solutions have no 
self-intersections.
We concentrate in three adjacent sites, say $(1,1)$, $(1,2)$ and $(2,2)$ for definiteness and assume that 
there is exactly one grain in each of $R_{(1,1)}$, $R_{(1,2)}$ and $R_{(2,2)}$, with centers $\xi_{(1,1)}$, 
$\xi_{(1,2)}$ and $\xi_{(2,2)}$, respectively. 
Other situations can be recovered by translation and by a $\pi/2$, $\pi$ or $3\pi/2$ rotation. 

Let $b_s= (1-\sqrt{s})/2$ and let $\phi_s$ be the angle that the segments that join the grain in 
$R_{(1,2)}$ to the other two grains form. The length of such segments is $\sqrt{s+1}\in(1,\sqrt{2})$.
Self-intersections may occur in several ways, we aim to avoid then in the situations where the angle between 
joining elemental components if narrowest. 
The minimum possible $\phi_s$, if $s$ is fixed, is achieved 
when the centers of the grains are $(1-b_s,1-b_s)$ for the one in $R_{(1,1)}$, $(b_s,2-b_s)$ for the one in 
$R_{(1,2)}$ and $(1+b_s,1+b_s)$ for the one in $R_{(2,2)}$ (see Figure~\ref{no:self:int}). This gives:
\begin{equation}\label{theta:s}
\cos \phi_s=\frac{2(1-2b_s)}{(1-2b_2)^2+1}=\frac{2\sqrt{s}}{s+1}\,.
\end{equation}
This function is an increasing concave function for $s\in(0,1)$, this implies that 
$\phi_s\in (0,\pi/2)$ is a decreasing function with $\phi_s=\pi/2$ if $s=0$ and $\phi_s=0$ if $s=1$.

First we assume that $R=0$, in that case we do not have to worry about contact angles. 
Suppose now that the grains are joined by elemental components that are circular arcs 
tangent at $(b_s,2-b_s)$, i.e. at the grain in $R_{(1,2)}$. This gives the minimum radius of curvature 
\begin{equation}\label{max:radius}
r_{min}(s) = \frac{\sqrt{s+1}}{2\sin(\phi_s/2)}\,.
\end{equation}
In view of (\ref{theta:s}), we obtain that $r_{min}(s)$ is an increasing function with $r_{min}(0)=1/\sqrt{2}$ and 
$\lim_{s\to 1}r_{min}(s)=\infty$.
This implies that if the radius of the circular arcs is smaller than $r_{min}$ they intersect,  
i.e. this implies that for each $s\in (0,1)$, if 
\begin{equation}\label{H:restriction}
|H|< \min\{2\sin(\phi_s/2)/\sqrt{s+1},\varepsilon_0 a\}
\end{equation} 
the arcs do not intersect. In particular, if we take $0<s<s_0$ where $s_0$ is such that 
$r_{min}(s_0)=(a\varepsilon_0)^{-1}$, this ensures that $|H|<\varepsilon_0 a$ and non-self 
intersecting arcs.

Suppose now that $R>0$, with $R<1/2$. We can assume that $H$ is nearly constant 
for fixed $0<s<1$, so that (\ref{H:restriction}) is satisfied. Let $r$ denote 
the radius of the corresponding circumference and $(c_1,c_2)$ the coordinates of its center. 

The situation is symmetric with respect to the diagonal with negative slope of $S_{(1,2)}$. 
The arc that joins $B(\xi_{(1,2)})$ to $B(\xi_{(2,2)})$ (following orientation) has it center 
on the line $x_2=\frac{1}{\sqrt{s}}(x_1-1) + 2$. Let us assume, for definiteness, that $H>0$ 
at the contact point; in that case, $c_1>b_s+1/2$. We recall that if $\rho_c<-\pi/4$, then, 
by symmetry, the arc that joins $B(\xi_{(1,2)})$ to $B(\xi_{(2,2)})$
intersects the arc that joins $B(\xi_{(1,2)})$ to $B(\xi_{(1,1)})$. 
Thus we have to find a sufficient condition on $s$ that guarantees that $\rho_c>-\pi/4$, 
this will also imply a restriction on $\alpha$.

We can derive the radius of curvature $r_{min}$ for which the contact point has 
$\rho_c=-\pi/4$ and $\alpha=0$. This gives
\[
r_{min}(s)= \frac{\sqrt{(1+s)^2 s+(3-s)^2}-4\sqrt{s}R}{4\sqrt{s}\tan(\phi_s/2)}
\]
which is an increasing function of $s$. 
The value $\alpha=0$ gives the maximum of the contact angle for this radius of curvature. 
Then, if $s\in(0,s_0)$ with $s_0$ such that $r_{min}(s_0)=(a\varepsilon_0)^{-1}$, we obtain that 
$r>r_{min}(s)$, since $|H|<a\varepsilon_0$. Moreover, for such $r$ given, there is a restriction 
on $\alpha$, namely $\alpha\leq \alpha_{max}(s,r,R)<0$ where $\alpha_{max}(s,r,R)$ is 
the contact angle formed by the arc that intersects $B(\xi_{(1,2)})$ 
at the point with $\rho_c=-\pi/4$.

\begin{figure}[hhh]
\begin{center}
\includegraphics[width=0.75\textwidth,height=.35\textwidth]{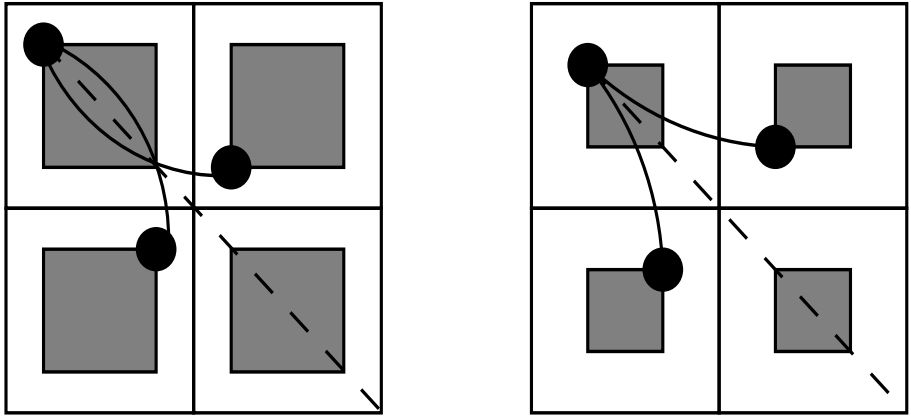}
\end{center}
\caption{Schematic picture showing how decreasing $s$ may avoid self intersecting arcs. 
In this case the centers of the grains are placed at the corners of $R_{\kappa}$ which makes the narrowest angle between arcs.}
\label{no:self:int}
\end{figure}

\end{proof}

In order to formulate the problem we identify each $Q$ with a set of 
coordinates in $\mathbb{N}^2$, namely: 

\begin{defin}\label{identify}
Let $\varepsilon$ and $M=\varepsilon L$ as in Definition~\ref{Q:def}. 
Adopting percolation terminology, we refer to each sub-square $S_\kappa$ and, without loss of generality, to
$R_\kappa$ of the partition defined in Definition~\ref{SR:def} as a {\bf site}. 

We identify every $Q\in I_\varepsilon$, via a suitable homeomorphism, with the square 
$[0,M]^{2}\subset\mathbb{R}^{2}$. 
We also denote by $\partial Q$ the union of sites that intersect the boundary of $Q$.

Finally, the sites $S_\kappa$ of $Q=[0,M]^{2}$ are identified with the coordinate of their top-right corner; 
if we identify $S_\kappa$ with $\kappa=(i,j)$, $i,j\in\{  1,2,3,\dots,M\}$, this denotes 
a site with top-right corner in $(i,j)$. Without loss of generality, $\kappa=(i,j)$ will be identified with $R_\kappa$.

We also identify the sides of $\partial Q$ as follows:
\begin{eqnarray*}
  \partial_{1} = \{1,2,3,\dots,M\} \times \{ 1 \} \,,
\ \partial_{2} =  \{ 1 \}  \times \{1,2,3,\dots,M\} \,,\\
 \partial_{3} =   \{1,2,3,\dots,M\} \times \{ M \} \,,
\ \partial_{4} = \{ M \}  \times \{1,2,3,\dots,M\}\,.
\end{eqnarray*}

\end{defin}

\begin{figure}[hhh]
\centering
\mbox{
\subfigure[The domain ${\cal Q}$ divided into squares $Q$]
{
\includegraphics[width=0.40\textwidth,height=.40\textwidth]{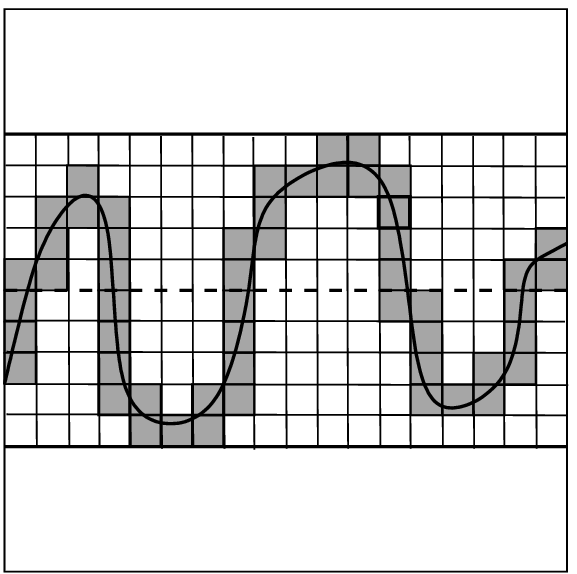}
\label{skecht1}
}
}
\hspace{1.40cm}
\mbox{
\subfigure[A square $Q$ divided into sub-squares $S_\kappa$]
{
\includegraphics[width=0.40\textwidth,height=.40\textwidth]{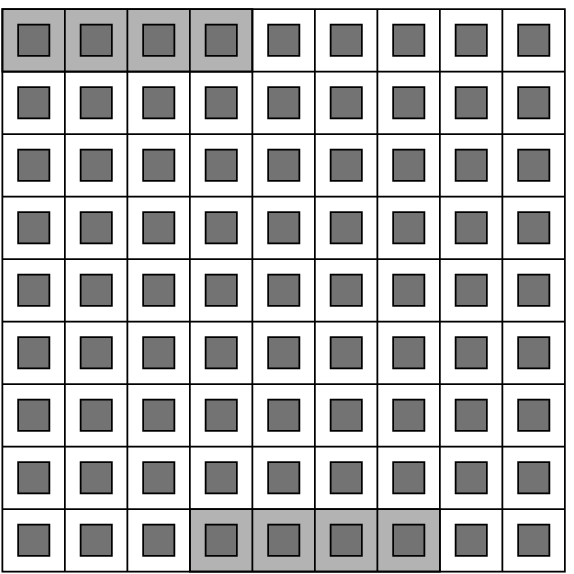}
\label{skecht2}
}
}
\caption{This figure shows a schematic picture of the division of the domain 
${\cal Q}$ that we use in the proof of Theorem~\ref{percolation:theo}. 
In \subref{skecht1} we show a curve $\Lambda$ contained in ${\cal Q}$ 
(thick solid line). The dashed line represents $x_2=\lambda$ and ${\cal Q}$ 
appears divided into squares $Q$ of size $(\varepsilon L)^2$ with 
$B\ll \varepsilon\ll 1$. The shaded $Q$'s correspond the ones that intersect $\Lambda$. 
In \subref{skecht2} we show an arbitrary $Q$ subdivided into squares $S_\kappa$ 
of size one. Each $S_\kappa$ contains the shaded region $R_\kappa$ with 
$|R_\kappa|=s\in (0,1)$. Here a portion of $\partial Q$ that intersects $\Gamma$ is highlighted.
}
\label{skecht}
\end{figure}

In the following we define what is meant by $Q$ is 'connected' or 
'not connected'. This depends on whether the sites contain grains or not, and whether 
these are connected throughout $Q$.

\begin{defin}\label{connectedness} 
Let $\omega\in\Omega_{\nu}(L)$, where we identify $\omega=\{\xi_j\}_{j=1}^{\nu L^2}$, then:
\begin{enumerate}
\item We say that a site $\kappa\in \{1,2,3,\dots,M\}^{2}$ is {\it open} 
if there exists at least one $j\in\{1,\dots,\nu L^2\}$ such that 
$\xi_{j}\in R_{\kappa}$. 

\item Similarly, we say that a site $\kappa\in \{1,2,3,\dots,M\}^{2}$ is 
{\it closed} 
if $\omega\cap R_{\kappa}=\emptyset$. 

\item The following random variable determines whether a site is open or closed:
\begin{equation}
\tau(\kappa;\omega)=\left\{
\begin{array}[c]{l}%
1\,,\quad\mbox{if}\quad\sum_{j=1}^{\nu L^{2}}\chi_{R_{\kappa}}(\xi_{j})>0\\  \\
0\,,\quad\mbox{if}\quad\sum_{j=1}^{\nu L^{2}}\chi_{R_{\kappa}}(\xi_{j})=0\,,
\end{array}
\right.  \label{random:vars}%
\end{equation}
here $\chi_{A}$ denotes the characteristic function for some $A\subset [0,M]^{2}$. 
\end{enumerate}

\end{defin}

The following definition sets when open and closed sites are connected, 
see e.g. \cite{russo2} for similar definitions 
in two-dimensional site percolation.

\begin{defin}
\label{Conn} 
Let $\omega\in\Omega_{\nu}(L)$, we say that two sites $\kappa_0$, $\kappa\in \{1,2,3,\dots,M\}^{2}$ are
\begin{enumerate}
\item  \textbf{adjacent}, and we write $\kappa_0\mathcal{R}(\omega)\kappa$, 
if $\operatorname*{dist}(\kappa_0,\kappa)=1$ and $\tau(\kappa_0,\omega)= \tau(\kappa,\omega)=1$. 

\item \textbf{connected} if there exists a finite set of distinct sites, $\kappa
_{1}$,..., $\kappa_{j}\in \{1,2,3,\dots,M\}^{2}$, such that every two such consecutive
sites are adjacent, and $\kappa_0\mathcal{R}(\omega) \kappa_{1}$ and
$\kappa_{j}\mathcal{R}(\omega)\kappa$. A set $\{\kappa_0,\kappa_{1}, \dots, \kappa_{j},\kappa\}$ 
with this property is called a \textbf{chain}.

\item \textbf{$*$-adjacent}, and we write $\kappa_0\mathcal{R}^*(\omega)\kappa$, 
if $\operatorname*{dist}(\kappa_0,\kappa) \leq \sqrt{2}$ and $\tau(\kappa_0,\omega)= \tau(\kappa,\omega)=0$.

\item \textbf{$*$-connected} if there exists a finite set of distinct sites, $\kappa
_{1}$, ..., $\kappa_{J}\in \{1,2,3,\dots,M\}^{2}$, such that every two such consecutive
sites are $*$-adjacent, and $\kappa_0\mathcal{R}^*(\omega) \kappa_{1}$ and
$\kappa_{j}\mathcal{R}^*(\omega)\kappa$. A set $\{\kappa_0,\kappa_{1}, \dots, \kappa_{j},\kappa\}$ 
with this property is called a \textbf{$*$- chain}.
\end{enumerate}

\end{defin}

The next definition gives the concept of regions being {\it connected} and $*$-{\it connected}.

\begin{defin}
Let $A_1\subset \partial_k$ and $A_2\subset\partial_l$ for $k\neq l$ and $k,l=1,2,3,4$ 
be topologically connected regions of $\partial Q$. Then, for a given $\omega\in\Omega_\nu(L)$, we say that
\begin{enumerate}
\item  $A_1$ is \textbf{connected} to $A_2$ if there exists a chain that intersects them.

\item $A_1$ is \textbf{$*$-connected} to $A_2$ if there exists a $*$-chain that intersects them.
\end{enumerate}

\end{defin}

We give a probability space to the configurations of sites induced by $\mu_{\nu}$:

\begin{defin}
We endow $\Omega_M:=\{0,1\}^Q$ where $Q\equiv\{1,2,3,\dots,M\}^{2}$ with the structure of a probability space. 
We define the probability $\tilde{\mu}_M$ as follows, for any $A\in\Omega_{M}$ we set:
\begin{equation}
\tilde{\mu}_M(A) = \mu_\nu\big(  \left\{  \omega\in\Omega_\nu(L):\ \tau
(Q;\omega) \in A\right\}  \big) \,.\label{Prob2}%
\end{equation}
\end{defin}

For example, let ${\cal C}_\kappa=\{\kappa\in\{1,\dots,M \}^2 \ \mbox{is closed}\}$ then
\begin{equation}\label{prob:c}
\tilde{\mu}_M({\cal C}_\kappa)= \mu_{\nu}\big(\{\omega\in \Omega_\nu(L): \ \tau(\kappa;\omega)=0\}\big)= 
\left(  1-\frac{|R_{\kappa}|}{L^{2}}\right)^{\nu L^{2}}
\end{equation}
and let ${\cal O}_\kappa=\{\kappa\in\{1,\dots,M \}^2 \ \mbox{is open}\}$, then 
\[
\tilde{\mu}_M({\cal O}_\kappa)= \mu_{\nu}\big(\{\omega\in \Omega_\nu(L): 
\ \tau(\kappa;\omega)=1\}\big)= 1- \tilde{\mu}_M({\cal C}_\kappa)\,.
\]

We observe, that in contrast to the classical site percolation scenario, the probabilities of two different 
sites of being open are not mutually independent, because for finite $L$ the number of grains is finite, and that 
influences the presence of finding more or less grains in different sites.

On the other hand, since $|R_{\kappa}|=s$ for all $\kappa\in\{1,\dots,M\}^2$ then 
$\tilde{\mu}_M({\cal C}_\kappa)=q(L)\in[0,1]$ and $\tilde{\mu}_M({\cal C}_\kappa)=1-q(L)$ 
are independent of $\kappa\in\{1,\dots,M\}^2$. Also, from (\ref{prob:c}) we get 
\begin{equation}\label{def:q}
q(L) \longrightarrow  e^{-\nu s}\quad\mbox{as} \ L\to\infty\,.
\end{equation}
Thus, in the limit $L\to\infty$, we find a classical site percolation
scenario; in this limit the number of grains in the domain 
(as well as the domain size) goes to infinity, making independent of each 
other the events $\{\omega\in\Omega_{nu}(L): \ \tau(\kappa;\omega)=1\}$. This means that there is a threshold value of $\nu$, 
say $\nu_0$ sufficiently large, 
such that the probability of finding an infinite cluster of closed sides is zero, i.e. that the dual event, 
finding an infinite cluster of open sites, 
has probability one, see e.g. \cite{russo2}, \cite{ LT}, \cite{Toth}.

In our setting, we have the following percolation result:
\begin{thm}[Percolation result]\label{percol}
Let $A_1\subset \partial_k$ and $A_2\subset\partial_l$ for $k\neq l$ and 
$k,l=1,2,3,4$ be disjoint and topologically connected regions of $Q$ of size 
$N_1$ and $N_2$, respectively, with $N_1$, $N_2\in (M/4, M/2)$.
Let $C(A_1,A_2)\subset \Omega_M$ be the set of configurations of sites for which $A_1$ 
is connected to $A_2$. Then, there exist $\nu_0>0$ and $L_0>0$ such that for all 
$\nu\geq \nu_0$ and $L\geq L_0$ for $\varepsilon(L)$ as in Definition~\ref{identify} 
such that
\[
\tilde{\mu}_M\big(C(A_2,A_2)\big)\geq 1-e^{-K\,\varepsilon(L)\,, L}
\]
for some $K>0$ independent of $L$. 
\end{thm}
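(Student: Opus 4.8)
The plan is to run a Peierls (contour) argument inside a fixed square $Q\equiv\{1,\dots,M\}^{2}$ with $M:=\varepsilon(L)L$, exploiting that when $\nu$ is large every site $R_\kappa$ is open with probability close to one (by (\ref{def:q}) this probability tends to $1-e^{-\nu s}$). Concretely, I would bound the probability of the complementary event $C(A_1,A_2)^{c}$, that is, of those configurations of sites for which \emph{no} open chain joins $A_1$ to $A_2$ inside $Q$, and show it is at most $e^{-K\varepsilon(L)L}$; note that $M=\varepsilon(L)L\to\infty$ as $L\to\infty$ because $B\ll\varepsilon(L)$ and $B=a/L$, so this bound is genuinely a near-one lower bound for $\tilde\mu_M(C(A_1,A_2))$ (with $C(A_1,A_2)$ in place of the misprinted $C(A_2,A_2)$).

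The first step is planar duality for site percolation on $\mathbb{Z}^{2}$ in its matching-pair form (as in \cite{russo2}): if the disjoint boundary arcs $A_1\subset\partial_k$, $A_2\subset\partial_l$ with $k\neq l$ are not joined by an open chain in $Q$, then the two complementary arcs of $\partial Q\setminus(A_1\cup A_2)$ — both nonempty since $k\neq l$ — are joined by a $*$-chain $\pi$ consisting entirely of closed sites. This is exactly the open-versus-$*$-closed asymmetry built into Definition~\ref{Conn}. The two endpoints of such a $\pi$ lie on $\partial Q$ and split it into two arcs, and since $A_1,A_2,B_1,B_2$ alternate around $\partial Q$, one of these arcs contains $A_1$ and the other contains $A_2$; as $N_1,N_2>M/4$, both arcs then have length larger than $M/4$, so the endpoints of $\pi$ are at Euclidean distance at least $c_0M$ for an absolute constant $c_0>0$ (the tightest case being endpoints that straddle a corner of $Q$). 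Because each $*$-step has Euclidean length at most $\sqrt{2}$, the chain $\pi$ must contain at least $\gamma_0 M$ sites, with $\gamma_0:=c_0/(4\sqrt{2})>0$ depending only on the geometry and not on $L$, $k$ or $l$.

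Next I would estimate probabilities. For a \emph{prescribed} $*$-self-avoiding chain of $\ell$ sites, the squares $R_\kappa$ being pairwise disjoint and each of area $s$, their union has area $\ell s$, and the chain is closed exactly when all $\nu L^{2}$ independent, uniformly distributed centers avoid that union; hence
\[
\tilde{\mu}_M(\text{chain closed})=\Big(1-\frac{\ell s}{L^{2}}\Big)^{\nu L^{2}}\leq e^{-\nu s\,\ell}\qquad\text{for every }L,
\]
since $\ell\leq M^{2}\ll L^{2}/s$ and $(1-x)^{n}\leq e^{-nx}$ for $0\le x\le1$. As each site has $8$ $*$-neighbours, the number of $*$-self-avoiding chains of $\ell$ sites with an endpoint on $\partial Q$ is at most $(\#\partial Q)\cdot 8\cdot 7^{\ell-1}\leq 32\,M\,7^{\ell-1}$. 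Combining the duality step, the length bound $\ell\geq\gamma_0 M$ and a union bound,
\[
\tilde{\mu}_M\big(C(A_1,A_2)^{c}\big)\ \leq\ \sum_{\ell\geq\gamma_0 M}32\,M\,7^{\ell-1}e^{-\nu s\,\ell}\ =\ \frac{32\,M}{7}\sum_{\ell\geq\gamma_0 M}\big(7e^{-\nu s}\big)^{\ell}\,.
\]
Choosing $\nu_0>s^{-1}\log 7$, so that $r:=7e^{-\nu s}<1$ for all $\nu\geq\nu_0$, the geometric series converges and gives $\tilde{\mu}_M(C(A_1,A_2)^{c})\leq\frac{32M}{7(1-r)}\,e^{-\gamma_0\log(1/r)\,M}$; since $M=\varepsilon(L)L\to\infty$, for any $K<\gamma_0\log(1/r)$ there is $L_0$ so that this is at most $e^{-KM}=e^{-K\varepsilon(L)L}$ for all $L\geq L_0$, which is the claim.

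I expect the delicate point to be the combination of the first two steps: one must state and apply the boundary-arc version of site-percolation duality with exactly the open/$*$-closed matching of Definition~\ref{Conn}, and then extract from it a lower bound on the length of any blocking $*$-chain that is \emph{uniform} in $L$ and in the finitely many choices of $(k,l)$ — this is where the hypothesis $N_1,N_2\in(M/4,M/2)$ is used essentially (it forces both arcs between the endpoints of the blocking chain to be long). By contrast the probabilistic part is a routine Peierls estimate; in particular the weak correlations between sites (the caveat following (\ref{def:q})) cause no trouble here, since disjointness of the $R_\kappa$ already bounds the probability that a whole chain is closed by the independent value $e^{-\nu s\ell}$ for every finite $L$.
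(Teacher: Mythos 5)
Your proposal is correct and follows essentially the same route as the paper: planar duality reduces the complementary event to the existence of a closed blocking $*$-chain (the paper's Lemma~\ref{connectedVSnonconnected}), the hypothesis $N_1,N_2\in(M/4,M/2)$ forces any such chain to have length of order $M$, the disjointness of the $R_\kappa$ gives the exact bound $(1-\ell s/L^2)^{\nu L^2}\leq e^{-\nu s\ell}$ for a prescribed chain of $\ell$ closed sites, and a Peierls counting argument with $\nu_0$ large enough to beat the connective constant yields the exponential decay in $M=\varepsilon(L)L$. The only differences are cosmetic (you count $*$-neighbours as $8$ where the paper uses $9$, and you spell out the geometric lower bound on the blocking chain's length slightly more explicitly).
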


In order to prove this result we show that 
the probability of the dual or complementary events tends to zero. 
We identify the complementary events 
with the aid of the following topological lemma:
\begin{lem}\label{connectedVSnonconnected}
Let $A_1\subset \partial_k$ and $A_2\subset\partial_l$ for $k\neq l$ and 
$k,l=1,2,3,4$ be disjoint and topologically connected regions of $\partial Q$. 
Let $A_1^c$ and $A_2^c$ be the topologically connected components of 
$\partial Q \backslash (A_1\cup A_2)$, then $A_1$ and $A_2$ are not connected 
if and only if $A_1^c$ and $A_2^c$ are $*$-connected.
\end{lem}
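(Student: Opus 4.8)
The plan is to recognize this as a discrete Jordan-curve–type duality statement for two-dimensional site percolation on the square $Q=[0,M]^2$, adapted to the fact that the four boundary arcs $A_1,A_2,A_1^c,A_2^c$ partition $\partial Q$ cyclically (say in the order $A_1, A_1^c, A_2, A_2^c$). First I would fix notation: enumerate the four arcs around $\partial Q$ and note that, because $A_1\subset\partial_k$ and $A_2\subset\partial_l$ with $k\neq l$ are disjoint and topologically connected, the set $\partial Q\setminus(A_1\cup A_2)$ has exactly two connected components $A_1^c, A_2^c$, which alternate with $A_1,A_2$ along the boundary circle. The statement to prove is then the standard dichotomy: either there is an open nearest-neighbour chain of sites joining $A_1$ to $A_2$, or there is a $*$-closed chain joining $A_1^c$ to $A_2^c$, but never both.

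For the ``not both'' direction I would argue by contradiction using a planarity/crossing argument: suppose an open chain $\pi$ joins $A_1$ to $A_2$ and a $*$-closed chain $\pi^*$ joins $A_1^c$ to $A_2^c$. Realize both chains as continuous arcs in the plane (piecewise-linear curves through site centres), each with endpoints on $\partial Q$ lying in the prescribed arcs. Because the endpoints of $\pi$ separate the two endpoints of $\pi^*$ on the boundary circle (the cyclic order is $A_1, A_1^c, A_2, A_2^c$), the two arcs must cross in the interior of $Q$ by the Jordan curve theorem. At a crossing point one shows a nearest-neighbour-open site must be $*$-adjacent to a $*$-closed site at distance $\le\sqrt2$, which would force that site to be simultaneously open and closed — a contradiction. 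The point here is the asymmetry built into Definition~\ref{Conn}: ``adjacent'' uses Euclidean distance $1$ while ``$*$-adjacent'' uses distance $\le\sqrt2$, and this is exactly the matched pair of adjacency relations on $\mathbb Z^2$ for which self-duality holds, so a diagonal closed–open pinch cannot let the two curves pass through each other without a common site.

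For the harder direction — ``if $A_1$ is not connected to $A_2$ then $A_1^c$ and $A_2^c$ are $*$-connected'' — I would take the set $W$ of all sites that are connected (by an open nearest-neighbour chain) to $A_1$, together with $A_1$ itself, and consider its complement. The key step is to extract the ``$*$-boundary layer'' of $W$: the set of closed sites that are $*$-adjacent to $W$. A combinatorial lemma (the discrete analogue of: the topological boundary of a connected set of cells contains a surrounding circuit) shows that this layer contains a $*$-closed chain that, inside $Q$, together with pieces of $\partial Q$, separates $A_1$ from $A_2$; since by hypothesis $A_1$ does not reach $A_2$, this separating $*$-chain must have both its endpoints on $\partial Q$, and those endpoints cannot lie in $A_1$ or $A_2$ (those sites would then be connectable to $A_1$, or would sit on the $A_2$ side), so they lie one in $A_1^c$ and one in $A_2^c$. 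Hence $A_1^c$ is $*$-connected to $A_2^c$. I expect the main obstacle to be making this last extraction fully rigorous at the boundary: one must handle carefully the sites of $\partial Q$ that are neither genuinely ``interior'' nor cleanly in one of the four arcs, and verify that the $*$-chain one produces really does land in $A_1^c$ and $A_2^c$ rather than dead-ending inside $Q$ or on the wrong arc. This is a routine but fiddly planar-duality bookkeeping argument, and the cleanest route is probably to cite the standard matched-pair duality for site percolation on $\mathbb Z^2$ (as in \cite{russo2}) and simply adapt it to the rectangle with marked boundary arcs, rather than reprove it from scratch.
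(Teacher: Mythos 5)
Your plan is correct and coincides with the paper's treatment: the paper does not prove Lemma~\ref{connectedVSnonconnected} at all, but observes it is the classical Jordan-curve/matched-adjacency duality for site percolation (open nearest-neighbour chains versus closed $*$-chains) and refers to the literature (Bollob\'as--Riordan) for the adaptation to a square with marked boundary arcs. The crossing argument and the extraction of the $*$-boundary layer of the open cluster of $A_1$ that you sketch are exactly the standard proof those references contain, so nothing further is needed.
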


Results analogous to Lemma~\ref{connectedVSnonconnected} are classical in 
percolation theory and are related to the Jordan curve theorem (in this case 
the exterior of $A_1\cup A_2$ in $Q$ is $*$-connected 
if and only if $A_1$ and $A_2$ are not connected in $Q$). 
Notice that the lemma is independent of any probability choice. 
The proof can be adapted from the literature, 
see e.g. Bollobas~\cite{Bollobas}.

We can now show Proposition~\ref{percol}.

\begin{proof}[Proof of Proposition~\ref{percol}]
In view of Lemma~\ref{connectedVSnonconnected} and taking into account that 
$A_1$ and $A_2$ have length in $(M/4,M/2)$, the crucial percolation result that 
we have to prove is that the probability of a $*$-chain to reach 
a distance of order $M$ tends to zero.

We first observe that given $\kappa\in \{1,2,\dots,M\}^2$, let 
$\mbox{Chain}(\kappa,j)\subset \Omega_M$ 
with $j\in\mathbb{N}$ and $j\leq M^2$, be the set of site configurations for 
which there exist a $*$-chain of the form $\{\kappa,\kappa_1,\dots,\kappa_{j-1}\}$, then
\begin{equation}\label{j-chain}
\tilde{\mu}_M\big( \mbox{Chain}(\kappa,j) \big)\leq 9^{j}\left(1-\frac{j s}{L^2}\right)^{\nu L^2}\,.
\end{equation}
This is because the probability that a given set of $j$ sites are closed is 
\[
\left(1-\frac{J s}{L^2}\right)^{\nu L^2}\,,
\]
and there are at most 
$9^j$ $*$-chains with that length (since there are at most $9$ sites $*$-adjacent to any given closed site).

Let $J_h\in \{1,\dots,M\}$ the maximum horizontal distance from $A_1$ to $A_2$ and let 
$ J_v\in \{1,\dots,M\}$ the maximal vertical distance from $A_1$ to $A_2$. 
Then $J^*= \max\{J_h,J_v\}$ satisfies $J^*\in (M/4,M]$.

Let $C^*(A_1^c,A_2^c)\subset \Omega_M$ be the set of site configurations such that 
$A_1^c$ and $A_2^c$ are $*$-connected, then for $L$ sufficiently large
\begin{equation}\label{est:muB}
\tilde{\mu}_M\big( C^*(A_1^c,A_2^c)\big) \leq M \sum_{J^{*}\leq j \leq J^{*} M} 9^j 
\left( 1  -  \frac{j s}{L^2} \right)^{\nu L^2} < M\sum_{J^{*}\leq j \leq J^{*} M} 9^j e^{-\nu j s}\,.
\end{equation} 
Then, taking $\nu$ large enough we obtain that $\log(9)-\nu s< -1/2$, thus from 
(\ref{est:muB}) we get
\begin{equation}\label{est:muB2}
\tilde{\mu}_M\big( C^*(A_1^c,A_2^c)\big) \leq M J^{*}(M-1) e^{-\frac{J^{*}}{2}}\,.
\end{equation} 
We write $J^{*}= l M$ for certain value $l\in(1/4,1]$, then
\[
M^2 J^{*} e^{-\frac{J^{*}}{2}} = l M^3 e^{-\frac{l}{2}M}\,.
\]
This and the definition of $M$ (Definition~\ref{SR:def}) concludes the proof.

\end{proof}

We can now show Theorem~\ref{percolation:theo}.

\begin{proof}[Proof of Theorem~\ref{percolation:theo}]
Lemma~\ref{Q:construct} implies that we can determine the probability of a 
$\Gamma$ intersecting $Q$ through portions of $\partial Q$ that intersect 
$\Lambda$ by computing the probability of finding grain centers in the sub-boxes 
$R_\kappa$. This Lemma also give the restriction in the contact angle.  

We let $\nu_0$ and $L_0$ and $\varepsilon(L)$ be as in Theorem~\ref{percol} and 
 assume that $\nu\geq \nu_0$ and $L\geq L_0$. 
Let $J_L(\Lambda)\subset \mathbb{N}$ such that if 
$i\in J_L(\Lambda)$ then $Q_i\subset I_{\varepsilon(L)}$ satisfies that 
$Q_i\cup \Lambda\neq \emptyset$. For every $i \in J_L(\Lambda)$, let also $S_1(Q_i)$, 
$S_2(Q_i)\subset \partial Q_i$ such that $S_j(Q_i)\cap \Lambda \neq \emptyset$ 
for $j=1,2$ and with length in $(M/4,M/2)$ as in Theorem~\ref{percol}. 

Then, by Theorem~\ref{percol}, 
\[
\tilde{\mu}_M(C(S_1(Q_i),S_2(Q_i)))\geq 1-e^{-K\,\varepsilon(L)\, L}\,,
\] 
i.e. for all $\mathcal{U}_i\subset\Omega_{\nu}(L)$ compatible with $C(S_1(Q_i),S_2(Q_i))$, 
$\mu_\nu\big( \mathcal{U}_i\big)  \geq 1-1/(\varepsilon(L)\, L)^K$.
On the other hand
\[
\mu_\nu\big(\cap_{i\in J_L(\Lambda)} \mathcal{U}_i\big)\geq 
1- \sum_{i\in J_L(\Lambda) } \mu_\nu\left( \mathcal{U}_i\right)\geq 
1-\#  J_L(\Lambda) e^{-K \varepsilon(L)\, L}
\]
Observe that $1/\varepsilon(L)< \# J_L(\Lambda)<1/\varepsilon^2(L)$, 
thus taking $\varepsilon(L)=L^{-p}$ with $0<p <1$ 
we obtain the result with 
\[
\delta_\varepsilon(L)=1-L^{2p} e^{-K  L^{1-p}}\,.
\]

\end{proof}

\paragraph{Acknowledgements}
The authors acknowledge support of the Hausdorff Center for Mathematics of the University of Bonn 
as well as the project CRC 1060 The Mathematics of Emergent Effects, that is funded through the 
German Science Foundation (DFG). This work was also partially supported by the Spanish projects DGES MTM2011-24-10 
and MTM2011-22612, the ICMAT Severo Ochoa project SEV-2011-0087 and the Basque Government project IT641-13.

\section*{Appendix~}\label{appendix}

\appendix

\section{Proof of Lemma~\ref{geometric:lemma}}\label{sec:proofgeolemma}

We divide the proof in several steps. First we recall that we are assuming 
\begin{equation}\label{regime:hyp}
L \gg \min\{ \sqrt{B},R^{-1}\} \quad  \mbox{with}\quad B\to \infty \quad \mbox{and}\quad R\to 0\,.
\end{equation}

A rough estimate on the total length of a solution $\Gamma$ that joins $m$ 
centers is achieved by thinking of an interface having the longest possible length. We show that this worst-case 
scenario is given by an interface $\Gamma$ having as many as possible folds elongating nearly horizontally 
from one vertical boundary to the other. It is clear that the longest elemental components of such interface will 
occur near $x_2=v_1 L$ and that their union connects very few grains, since for such pieces $C$ is close to $1$ 
(see Lemma~\ref{C:estimates}).

Taking into account that the volume is fixed and that the grains are uniformly distributed 
(and do not see gravity) there must be folds on both sides of $x_{2}=v_{1} L$. We can distinguish two types of interfaces, 
those that zigzag starting above $x_{2}=v_{1} L$ and those starting below it.

Let us make the structure of such hypothetical interface solutions more precise. 

As we did in Definition~\ref{def:lr:rl} we also distinguish different types of elemental components 
and parts of the interface. First we distinguish those parts above $x_2=v_1 L$, that we indicate 
with a subindex $+$ (they have positive curvature), those below $x_2=v_1 L$, that we indicate with 
a superindex $-$ since (negative curvature), and those that cross $x_2=v_1 L$, 
that have curvature changing sign across $x_2=v_1 L$, and that we indicate with the superindex $c$. 
Then, we can define, skipping the dependency on $(\omega,\lambda,\theta)$ for simplicity,
\begin{equation}\label{worstcase:pieces}
\Gamma= \Gamma^{+} \cup \Gamma^{-} \cup \Gamma^{c} 
\quad \mbox{with} \quad 
\Gamma^{+} = \bigcup_{i\in J^{+}} \gamma^{+,i}\,,
\quad 
\Gamma^{-} = \bigcup_{j\in J^{-}} \gamma^{-,j}\,,
\quad\mbox{and}\quad
\Gamma^{c} = \bigcup_{k\in J^{c}} \gamma^{c,k}\,,
\end{equation}
and the set of indexes $J^{+}$, $J^{-}$ and $J^{c}$ are defined in the obvious way, they are disjoint and 
$J^{+}\cup J^{-} \cup J^{c}=J$.

We distinguish, also in Definition~\ref{def:lr:rl}, two types of elemental components, those that, following its 
orientation, go from left to right (with subscript $lr$), and those that go from 
right to left (subscript $rl$).

We further decompose $\Gamma$ into horizontal levels: 
\begin{defin}[Horizontal level]\label{def:level} 
Given an interface $\Gamma$, we say that a portion 
$\tilde{\Gamma}\subset\Gamma$ is a horizontal level of $\Gamma$ if 
\begin{enumerate}
\item
\[
\min_{\xi_{j}\in\tilde{\Gamma}} \mbox{dist}(\xi_{j},x_1=0)< 2R+1\quad \mbox{and}\quad 
\min_{\xi_{j}\in\tilde{\Gamma}} \mbox{dist}(\xi_{j},x_1=L)< 2R+1\,,
\]
\item And, if we let $d_{min}(\tilde{\Gamma})=\min_{x\in \tilde{\Gamma}}\mbox{dist}(x,x_2 =v_1 L )$ and  
$d_{max}(\tilde{\Gamma})=\min_{x\in \tilde{\Gamma}}\mbox{dist}(x,x_2 =v_1 L )$, then 
\begin{equation}\label{vertical:restriction}
 d_{max}(\tilde{\Gamma}) - d_{min}(\tilde{\Gamma})\leq 2R + \frac{4}{\sqrt{B}}\,\frac{1}{1+ \sqrt{B} d_{min}(\tilde{\Gamma})} 
\end{equation}
\end{enumerate}
\end{defin}

We now give a definition of a curve that zigzags horizontally:
\begin{defin}[Zig-zag interface]\label{def:zigzag}
A solution $\Gamma$ of (\ref{S1E1}) with Young conditions is a zigzag interface if:
\begin{enumerate}
\item There exists a set of consecutive levels $\Gamma_{0}$ that contains $\Gamma^{c}$. One 
of these levels, that we denote by $\Gamma_{0}^{0}$, must 'cross' $x_{2}=v_{1} L$, in the sense that 
if $\xi_{c}$ is the first and $\zeta_{c}$ the last contact point connected by $\Gamma_{0}^{0}$, then 
$\xi_{c}$ and $\zeta_{c}$ lie on opposite sides of $x_{2} = v_{1} L$.

\item There exists $N^+$ and $N^- \in \mathbb{N}$ with $N^-$, $N^+\geq 0$ such that
\[
\Gamma = \left(\bigcup_{i=1\dots N^{+}}\Gamma^{+}_{i}\right)\cup\left( \bigcup_{i=1\dots N^{-}} 
\Gamma^{-}_{i}  \right)\cup \Gamma_{0}
\]
where $\Gamma^{+}_{i}\in \Gamma^{+}$ with $i\in\{1,\dots,N^{+}\}$ and 
$\Gamma^{-}_{j}\in \Gamma^{-}$ with $i\in\{1,\dots,N^{-}\}$ are horizontal levels of $\Gamma$, 
indexed in decreasing order with proximity to $x_2=v_1 L$. 
\end{enumerate}
\end{defin}

\begin{rem}\label{Clessthan1}
We distinguish four type of interfaces, namely:
\begin{enumerate}
\item Type I: the level $\Gamma_{0}^{0}$ has elemental components of the form $\gamma_{lr}$ and 
crosses from positive to negative curvature.

\item Type II: the level $\Gamma_{0}^{0}$ has elemental components of the form $\gamma_{rl}$ and 
crosses from positive to negative curvature.

\item Type III: the level $\Gamma_{0}^{0}$ has elemental components of the form $\gamma_{lr}$ and 
crosses from negative to positive curvature.

\item Type IV: the level $\Gamma_{0}^{0}$ has elemental components of the form $\gamma_{rl}$ and 
crosses from negative to positive curvature.

\end{enumerate}

This definition does not exclude the possibility that other levels rather than the ones contained in 
$\Gamma_{0}$ have elemental components of the form $\gamma_{lr}$ with $C<1$. 

We shall write 
\[
\Gamma_{0} = 
\left(\bigcup_{j\in J_{0}^{+}} \gamma^{+,j} \right)\cup 
\left(\bigcup_{j\in J_{0}^{-}} \gamma^{-,j} \right)\cup
\left(\bigcup_{j\in J_{0}^{c}} \gamma^{c,j} \right)
\]
where, depending on the type of interface, $J_{0}^{+}$ and $J_{0}^{-}$ might be empty. 

Observe that there exists at most three consecutive levels in $\Gamma_{0}$: $\Gamma_{0}^{0}$, one above it 
$\Gamma_{0}^{1}$, and one below it, $\Gamma_{0}^{-1}$. The levels $\Gamma_{0}^{1}$ and $\Gamma_{0}^{-1}$ do 
not cross $x_{2}=v_{1}L$ in the sense that $\Gamma_{0}^{0}$ does, but individual elemental components of them 
can. This is clear from the orientation of the elemental components and Definition~\ref{def:zigzag} 
(specify in at least one case of the types above). 

We might write:
\[
\Gamma_{0}^{k} = 
\left(\bigcup_{j\in J_{0}^{k,+}} \gamma^{+,j} \right)\cup 
\left(\bigcup_{j\in J_{0}^{k,-}} \gamma^{-,j} \right)\cup
\left(\bigcup_{j\in J_{0}^{k,c}} \gamma^{c,j} \right) \quad \mbox{for} \quad k=-1,0,1\,. 
\]
\end{rem}

We shall first derive Lemma~\ref{geometric:lemma} for zigzag interfaces in which $\Gamma_{0}$ has three 
levels. Then we prove that given the set of interfaces connecting $m$ grains, then these interfaces maximise 
the length.

Observe that such $\Gamma_{0}$'s connect at least $4$ centers, 
a pathological case given by a $\Gamma_{0}$ composed 
of just three elemental components, each being a level of $\Gamma_{0}$: $\Gamma_{0}^{0}=\{\gamma_{rl}^{c}\}$, 
oscillating around $x_{2}=v_{1}L$, $\Gamma_{0}^{1}=\{\gamma_{lr}^{+}\}$, just above $x_{2}=v_{1}L$, 
and $\Gamma_{0}^{-1}=\{\gamma_{lr}^{-}\}$.

Let us first derive some elementary properties of zigzag interfaces:
\begin{lem}\label{basic:zigzag:IandIII}
If an interface $\Gamma$ is a zigzag interface of type I or III and such that its $\Gamma_{0}$ has exactly 
three levels, then:
\begin{enumerate}
\item $N^{+}$ and $N^{-}$ are odd numbers. 

\item 
\[
\Gamma^{\pm}_{2i-1} = \bigcup_{j\in J_{2i-1}^{\pm}} \gamma_{lr}^{\pm,j} \quad \mbox{and}\quad 
\Gamma^{\pm}_{2i} = \bigcup_{j\in J_{2i}^{\pm}} \gamma_{rl}^{\pm,j}\quad \mbox{with}\quad 
i=1,\dots,\frac{N^{\pm}+1}{2}\,.
\] 

\item The index sets $\{J_{i}^{+}\}$ are disjoint with $\cup_{i=1\dots N^{+}} J_{i}^{+}\subset J^{+}$
and also $\{J_{i}^{-}\}$ are disjoint with $\cup_{i=1\dots N^{-}}J_{i}^{-}\subset J^{-}$. 
These sets might be empty.

\item If $\Gamma$ connects $m$ grains then $\gamma_{1}\in \Gamma_{N^{+}}^{+}$ and 
$\gamma_{m+1}\in \Gamma_{N^{-}}^{-}$ if $\Gamma$ is of type $I$ and $\gamma_{1}\in \Gamma_{N^{-}}^{+}$ and $\gamma_{m+1}\in \Gamma_{N^{+}}^{+}$ if $\Gamma$ is of type $III$.
\end{enumerate}
\end{lem}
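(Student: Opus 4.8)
The plan is to reduce all four assertions to a single bookkeeping argument that propagates the $lr$/$rl$ character of the elemental components along the oriented curve $\Gamma$, starting from the level $\Gamma_0^0$, whose behaviour is prescribed by the definition of type I and type III. Two structural facts do the work. First, \emph{each horizontal level of a zigzag interface is monotone in the $x_1$-coordinate}, so it consists either entirely of $\gamma_{lr}$-components or entirely of $\gamma_{rl}$-components. Second, \emph{the orientation of $\Gamma$ is globally coherent} — since $\mathbf n$ points towards the air and $\{\mathbf t,\mathbf n\}$ is positively oriented, the air lies on one fixed side of $\Gamma$ — so the passage from one level to the next takes place near a vertical wall and reverses the $lr$/$rl$ character, and for types I and III the first contact point of $\Gamma$ lies on $\{x_1=0\}$ and the last on $\{x_1=L\}$.

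Granting these, the argument runs as follows for type I; type III is identical after reversing the orientation. By Definition~\ref{def:zigzag} and Remark~\ref{Clessthan1}, $\Gamma_0$ consists of the three levels $\Gamma_0^1$, $\Gamma_0^0$, $\Gamma_0^{-1}$, with $\Gamma_0^1$ above and $\Gamma_0^{-1}$ below $\{x_2=v_1L\}$; since for type I the curve lies above $\{x_2=v_1L\}$ before $\Gamma_0^0$ and below it afterwards, and $\Gamma$ is non-self-intersecting, $\Gamma$ is traversed in the order $\Gamma_{N^+}^+,\dots,\Gamma_1^+,\Gamma_0^1,\Gamma_0^0,\Gamma_0^{-1},\Gamma_1^-,\dots,\Gamma_{N^-}^-$. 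Reading the wall-folds backwards from $\Gamma_0^0$, which is of type $\gamma_{lr}$ and hence has its first contact point on $\{x_1=0\}$: the preceding level $\Gamma_0^1$ is $\gamma_{rl}$, so $\Gamma_1^+$ is $\gamma_{lr}$, and then by the first structural fact and the alternation of folds $\Gamma_i^+$ is of type $\gamma_{lr}$ exactly when $i$ is odd; the mirror computation below $\Gamma_0^0$ gives that $\Gamma_0^{-1}$ is $\gamma_{rl}$, $\Gamma_1^-$ is $\gamma_{lr}$, and $\Gamma_i^-$ is of type $\gamma_{lr}$ exactly when $i$ is odd. This is assertion (ii). For (i), the first level of $\Gamma$ is $\Gamma_{N^+}^+$ and its first contact point — the starting point of $\Gamma$ — lies on $\{x_1=0\}$, so $\Gamma_{N^+}^+$ is of type $\gamma_{lr}$ and $N^+$ is odd; likewise the last level $\Gamma_{N^-}^-$ ends on $\{x_1=L\}$, so it is of type $\gamma_{lr}$ and $N^-$ is odd (in particular the three-level hypothesis forces $N^\pm\geq1$). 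Assertion (iii) is immediate: the levels $\Gamma_i^\pm$ are pairwise disjoint sub-arcs of $\Gamma^\pm$, so the index sets $J_i^\pm$ are pairwise disjoint subsets of $J^\pm$, and they need not exhaust $J^\pm$ since $\Gamma_0^{\pm1}$ also carries $\pm$-curvature components and a level may degenerate to a single component. Finally (iv) is read off from the traversal order: $\gamma_1$ lies in the first level — $\Gamma_{N^+}^+$ for type I and $\Gamma_{N^-}^-$ for type III — and $\gamma_{m+1}$ lies in the last level — $\Gamma_{N^-}^-$ for type I and $\Gamma_{N^+}^+$ for type III.

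The step I expect to be delicate is the first structural fact, namely that a regular level $\Gamma_i^\pm$ cannot reverse its horizontal direction. By Lemma~\ref{hori:vert:points} an elemental component has $x_1'(s)=0$ only at a height $\bar x_2=\pm x_2^{med}(C;B)$ with $C\geq0$, and by Lemma~\ref{funda:piece} the component flanking such a point runs all the way up to $x_2^{max}(C;B)$ on one side and down to $x_2^{min}(C;B)$ — which equals $0$, i.e. touches $\{x_2=v_1L\}$, when $0\le C<1$ — on the other. For $C$ bounded away from $1$ this excursion either violates the strip bound (\ref{vertical:restriction}) of Definition~\ref{def:level} or, when the component shrinks for $C$ large, has radius of curvature too small to connect grains at the average distance $1$, by Lemmas~\ref{C:estimates} and \ref{C:estimates:2} together with Corollary~\ref{corol1}; and for $C$ close to $1$ the whole excursion sits in the narrow strip around $\{x_2=v_1L\}$ and therefore belongs, by the hypothesis that $\Gamma_0$ has exactly three levels, to $\Gamma_0$ rather than to a regular level. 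The remaining bookkeeping — the precise ordering of the three levels of $\Gamma_0$, and the degenerate subcases in which $\Gamma_0^{1}$ or $\Gamma_0^{-1}$ reduces to a single elemental component — is routine once this fact and the coherence of the orientation are in place.
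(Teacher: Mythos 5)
The paper never proves this lemma: it is introduced as ``elementary properties of zigzag interfaces'' and the text immediately moves on (``We leave the analogous lemma for type II and IV to the reader''), so your write-up already supplies more than the source does. Your bookkeeping is the argument the authors evidently have in mind, and it is correct: since each level reaches within $2R+1$ of both walls (Definition~\ref{def:level}) and $\Gamma$ has no self-intersections, the levels must be traversed in monotone height order; consecutive levels fold at alternate walls and hence alternate between $\gamma_{lr}$ and $\gamma_{rl}$; the parity is anchored by $\Gamma_0^0$ being $\gamma_{lr}$ for types I and III; and the requirement that $\Gamma$ start on $\{x_1=0\}$ and end on $\{x_1=L\}$ forces the outermost levels to be $\gamma_{lr}$, whence $N^{\pm}$ odd. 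Parts (iii) and (iv) are then read off the traversal order, and you are right to correct the evident typo in (iv) for type III ($\gamma_1\in\Gamma_{N^-}^{+}$ should be $\Gamma_{N^-}^{-}$).

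The one step that is not airtight is your ``first structural fact,'' that a level away from $x_2=v_1L$ consists entirely of $\gamma_{lr}$ or entirely of $\gamma_{rl}$ components. Your exclusion of horizontal turning points for large $C$ rests on the claim that such a component has radius of curvature ``too small to connect grains at the average distance $1$.'' That is a statement about typical configurations, not about every $\omega\in\Omega_\nu(L)$: at height $d$ from the reference line one has $C\sim Bd^2/2$, and Lemma~\ref{single:distance} shows that two grains which happen to lie within $2R+O(1/(Bd))$ of each other can be joined by either a $\gamma_{lr}$ or a $\gamma_{rl}$ whose vertical excursion is $O(1/(Bd))$, comfortably inside the strip bound (\ref{vertical:restriction}). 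So a stray $\gamma_{rl}$ inside an otherwise left-to-right level is not deterministically excluded by anything in Definitions~\ref{def:level} and \ref{def:zigzag}; either the monochromaticity of levels has to be read into the definition of a zigzag interface, or part (ii) holds only up to such microscopic backtracks (which would be harmless for Lemma~\ref{geometric:lemma}, where only the endpoint displacements $\mathcal{D}_i$ enter). Since the paper offers no proof at all, this is as much a defect of the lemma's formulation as of your argument, but your justification of the key structural fact does not close it as written.
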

We leave the analogous lemma for type II and IV to the reader. Figure~\ref{zigzags} shows a skecht of the four types of zigzag interfaces with three levels in $\Gamma_0$.
\begin{figure}[hhh]
\begin{center}
\includegraphics[width=0.60\textwidth,height=.60\textwidth]{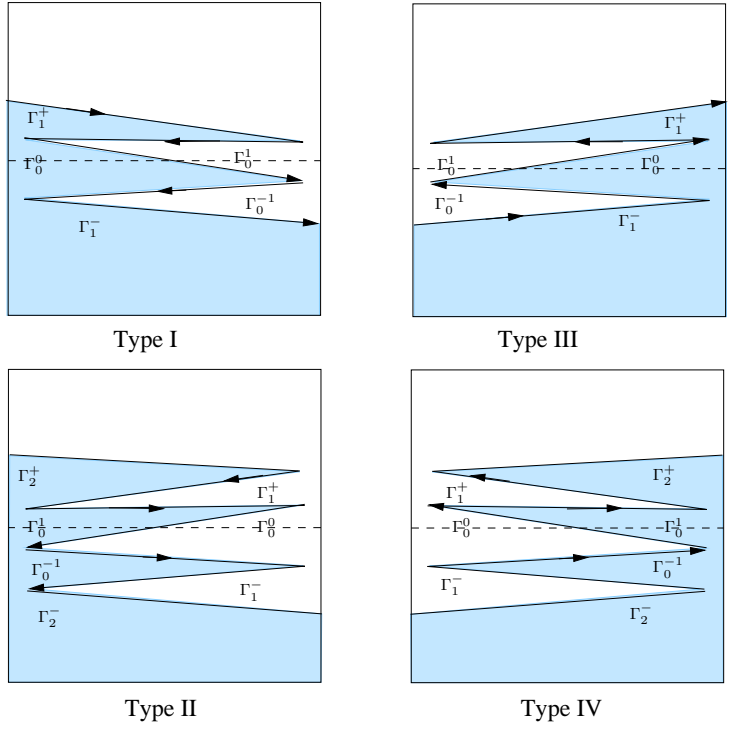}
\end{center}
\caption{This picture shows a sketch of the four types of zig-zag solutions of Definition~\ref{def:zigzag}. The dashed line represents $x_2=v_1 L$, 
the solid lines represent the interface. In the examples shown, the central horizontal level, $\Gamma_0$, has three parts ($\Gamma_0^0$, that crosses $x_2=v_1 L$, 
$\Gamma_0^1$, above $x_2=v_1 L$, and $\Gamma_0^{-1}$, below $x_2=v_1 L$) each represented by a straight line. In the examples of type I and III there is just one horizontal
level above and below $\Gamma_0$, and in the examples of  type II and IV there are two above and two below.}   
\label{zigzags}%
\end{figure}

\begin{proof}[Proof of Lemma~\ref{geometric:lemma}]
Assume there exists an $\omega\in \Omega_\nu(L)$ and a $\Gamma(\omega,\lambda,\theta)$ that is a zig-zag 
interface and connects $m$ grains where $m\geq  3$ and has three levels in $\Gamma_0$. it is clear any other 
interface that connects $m$ grains is necessarily of the same length or shorter, the length of the interface 
in a strip of the size of $\Gamma_0$ being at most of the order of $3L$, outside the strip the length between 
connections of grains is of the same order as for a zigzag interface, according to 
Definition~\ref{def:level} {\it (ii)} and Corollary~\ref{corol1}.

We can now prove the result for zigzag interfaces that join grains near $x_2=v_1L $ with the maximal possible length.
Let us assume that $\Gamma$ is of type $I$. Then, it has three levels in $\Gamma_0$ with three elemental components of the 
form $\gamma^c$ whose length is of the order of $L$. Let
\[
n_{0}= \# J_{0}\,, \quad n_{i}^{+} = \# J_{i}^{+} \quad \mbox{and} \quad n_{i}^{-} = \# J_{i}^{-}\,,
\]
where $J_0=J_0^+\cap J_0^-\cap J_0^c$, and $J_{i}^{\pm}$ are as in Lemma~\ref{basic:zigzag:IandIII}. 
Let us make abuse of notation by saying that $\# J^c_0=3$, so that we index in this group only 
the pieces $\gamma^c$ with length of order $L$, and the other pieces $\gamma^c$ of smaller order of length
we index them in $J_0^+$ is they are above the of the former, and in $J_0^-$ if they are below. 
We then let $n_{0}^\pm= \# J_{0}^\pm$.

We recall that lemmas~\ref{C:estimates} and \ref{C:estimates:2} imply that the other elemental components 
$\gamma^c$ or $\gamma^{lr}$ and $\gamma^{rl}$ in $\Gamma_0$ have horizontal length comparable to the horizontal 
of elemental components in the even horizontal layers.

Thus we write
\begin{equation}\label{total:estimate}
\sum_{i=0}^{m}{\cal D}_{i} = \sum_{ \substack{i=0\dots m\\ i\in J_{0}^c}  }{\cal D}_{i} + \sum_{ \substack{i=0\dots m\\ i\in J_{0}^+}  }{\cal D}_{i} 
+ \sum_{ \substack{i=0\dots m\\ i\in J_{0}^-}  }{\cal D}_{i}     + \sum_{ \substack{i=0\dots m\\ i\notin J_{0}}} {\cal D}_{i} \,.
\end{equation}

Using Definition\ref{def:level} (\ref{vertical:restriction}), we can estimate the first term in (\ref{total:estimate}) by
\begin{equation}
\sum_{\substack{ i=0\dots m\\ i \in J_{0}^c}} {\cal D}_{i}\leq 
 3\left( L^{2} +\left( 2R + \frac{4}{\sqrt{B}}\right)^{2} \right)^{\frac{1}{2}}\leq 
3\left( L+ 2R +\frac{4}{\sqrt{B}}\right)\,,
\label{j0:estimate}
\end{equation}
because $d_{min}(\Gamma_0^0)=d_{min}(\Gamma_0^{\pm1})=0$.

We write ${\cal D}_{i}=(({\cal D}_{i}^{h})^{2}+({\cal D}_{i}^{v})^{2})^{\frac{1}{2}}\leq {\cal D}_{i}^{h}+{\cal D}_{i}^{v}$ where 
${\cal D}_{i}^{h}=|\xi_{i}^{(1)}-\xi_{i}^{(1)}|$ and ${\cal D}_{i}^{v}=|\xi_{i}^{(2)}-\xi_{i}^{(2)}|$. In particular,
\begin{equation}\label{vertical:estimate}
\sum_{ \substack{i=0\dots m\\ i\in J_{0}^+}  }{\cal D}_{i}^v 
+ \sum_{ \substack{i=0\dots m\\ i\in J_{0}^-}  }{\cal D}_{i}^v  
+ \sum_{\substack{ i=0\dots m\\  i\notin J_{0}}} D_{i}^{v}\leq (N^{+}+N^{-}+2)\left(2R + \frac{4}{\sqrt{B}}\right)\,.
\end{equation}
Where we still need to control $N^++N^-$ in terms of $m$.

We let $n_{min}^{\pm,even} = \min_{j=0,\dots,(N^{\pm}-1)/2}\{ n_{2j}^{\pm}\}$ and 
$n_{min}^{\pm,odd} = \min_{j=1,\dots,(N^{\pm}+1)/2}\{ n_{2j-1}^{\pm}\}$, and define 
$n_{min}^{even}=\min\{ n_{min}^{even,+},n_{min}^{even,-}\}$ and 
$n_{min}^{odd}=\min\{ n_{min}^{odd,+},n_{min}^{odd,-}\}$. 
It is clear that $n_{min}^{odd}\geq 2$ and that $n_0\geq 4$, then
\[
m+1 = \sum_{i=1}^{N^+}n_i^⁺ +\sum_{i=1}^{N^-} n_i^- + n_0= \sum_{i=0}^{N^+}n_i^⁺ +\sum_{i=0}^{N^-} n_i^- + 3
\geq (N^+ + 1) +(N^- + 1) +\sum_{j=0}^{(N^+-1)/2}n_{2j}^⁺ + \sum_{j=0}^{(N^--1)/2}n_{2j}^- + 3 
\]
and therefore,
\begin{equation}\label{m:estimate} 
m+1\geq N^+ + N^- + 5 +  n_{min}^{even}  \frac{N^+ + N^- + 2}{2}   \,.
\end{equation}
Which in particular gives that $m\geq N^++N^-+2$.

Let us get an estimate on the horizontal distances. 
Observe that 
\begin{equation}\label{par}
\sum_{\substack{ j=0,\dots,(N^{\pm}-1)/2 \\i\in J_{2j}^{\pm} }} {\cal D}_{i}^{h} 
\leq   n_{min}^{even} \frac{N^{\pm}+1}{2} \max_{\substack{j=1,\dots,(N^{\pm}-1)/2\\i\in J_{2j}}}{\cal D}_{i}^{h} \,. 
\end{equation}
On the other hand, since the absolute value of the curvature of the pieces increases with 
the distance to the horizontal, for each $j\in \{ 1,\dots,(N^{\pm}-1)/2\}$  
we have $\sum_{i\in J_{2j-1}^{\pm}} {\cal D}_{i}^{h} \leq 2 \sum_{i\in J_{2j}^{\pm}} {\cal D}_{i}^{h}$. Then, 
\begin{equation}\label{par:impar}
\sum_{ \substack{ j=0,\dots,\frac{N^{\pm}-1}{2} \\ i\in J_{2j}^{\pm}\cup J_{2j+1}^{\pm} }} {\cal D}_{i}^{h}\leq 3 
\sum_{\substack{j=0,\dots,\frac{N^{\pm}-1}{2}\\i\in J_{2j}} } {\cal D}_{i}^{h}\,.
\end{equation}
We can now estimate the horizontal levels using (\ref{par:impar}) and (\ref{par}), this gives
\begin{equation}\label{horizontal:estimate}
\sum_{ \substack{i=0\dots m\\ i\in J_{0}^+}  }{\cal D}_{i}^h + \sum_{ \substack{i=0\dots m\\ i\in J_{0}^-}  }{\cal D}_{i}^h 
+\sum_{\substack{ i=0\dots m\\  i\notin J_{0}}} {\cal D}_{i}^{h}  \leq 3 n_{min}^{even} \frac{N^{+}+N^{-}+2}{2} 
\max_{\substack{j=0,\dots,N^{\pm}-1)/2\\ i\in J_{2j}^{+}\cup J_{2j}^{-}}}{\cal D}_{i}^{h}  +2L\,,
\end{equation}
where the horizontal size of $\Gamma_{N^+}$ and of $\Gamma_{N^-}$ is estimated by $L$.

Using the estimates (\ref{j0:estimate}), (\ref{vertical:estimate}), (\ref{m:estimate}) and 
(\ref{horizontal:estimate}) in (\ref{total:estimate}) we get
\begin{equation}\label{pre:estimate}
\sum_{i=1}^{m}{\cal D}_{i}\leq  (m-2)\left(2R + \frac{4}{\sqrt{B}}\right)
+3 (m-4) \max_{i\in J_{2j}^{+}\cup J_{2j}^{-}}D_{i}^{h} + 5L\,.
\end{equation}

It is clear that the maximal distance between connected centers in the levels $\Gamma_{2i}^{\pm}$ 
is achieved at an elemental component $\gamma$ with $C=C^{\ast}$ in an elemental component of 
the form $\gamma_{rl}^{\pm}$. Although we can only guarantee that $C^\ast >-1$, we observe that
 the horizontal distances between centers are uniformly bounded. 
Namely, if $C^{\ast}>1$, Lemma~\ref{single:distance} {\it(ii)} implies  
\[
 \max_{i\in J_{2j}^{+}\cup J_{2j}^{-}}D_{i}
\leq 2R + \left(\frac{\pi^2+2}{B(C^\ast +1)}\right)^\frac{1}{2}\leq  2R + 
\left(\frac{\pi^2+2}{2B}\right)^\frac{1}{2}\,.
\]
If $0<C^{\ast}<1$ then Lemma~\ref{single:distance2} ({\it (ii)}) implies
\[
\max_{i\in J_{2j}^{+}\cup J_{2j}^{-}}D_{i}
\leq 2R + \left(\frac{\pi^2+2}{B(C^{\ast}+1)}\right)^\frac{1}{2}\leq
2R + \left(\frac{\pi^2+2}{B}\right)^\frac{1}{2}\,,
\]
and if $-1<C^{\ast}\leq 0$, then Lemma~\ref{single:distance3} implies
\[
\max_{i\in J_{2j}^{+}\cup J_{2j}^{-}}D_{i}
\leq 2R + \left(\frac{2(\pi^2+4)}{B}\right)^\frac{1}{2}\,.
\]

Thus, taking the maximum of these upper bounds we obtain that
\[
\sum_{i=1}^{m}D_{i}\leq  
(4m-14)\left(2R + \frac{K}{\sqrt{B}}\right)+ 5L
\]
where $K=\sqrt{2(\pi^2+4)}\approx 5.3$ and this gives (\ref{geometric}).

The proof for the other types of zigzag interfaces is similar and is left to the reader.
\end{proof}

\def\cprime{$'$}

\end{document}